\definecolor{Blue}{rgb}{0,0,1}
\definecolor{Green}{RGB}{0,100,0}
\definecolor{Pink}{RGB}{153,51,255}
\newcommand{\blue}{\textcolor{Blue}}
\newcommand{\red}{\textcolor{Red}}
\newcommand{\green}{\textcolor{Green}}
\newcommand{\orange}{\textcolor{Orange}}
\newcommand{\pink}{\textcolor{Pink}}
\newtheorem{theorem}{Theorem}[section]
\newtheorem{beastmate}{\cref{thm:beast}}
\crefname{beastmate}{\cref{thm:beast}}{\cref{thm:beast}}
\newtheorem{proposition}[theorem]{Proposition}
\newtheorem{corollary}[theorem]{Corollary}
\newtheorem{lemma}[theorem]{Lemma}
\newtheorem*{claim}{Claim}
\newtheorem*{rep@theorem}{\rep@title}
\newcommand{\newreptheorem}[2]{
\newenvironment{rep#1}[1]{
\def\rep@title{#2 \ref{##1}}
\begin{rep@theorem}}
{\end{rep@theorem}}}
\theoremstyle{definition}
\newtheorem{definition}[theorem]{Definition}
\newtheorem{example}[theorem]{Example}
\newtheorem{remark}[theorem]{Remark}
\newtheorem{problem}[theorem]{Problem}
\newtheorem{notation}[theorem]{Notation}
\newtheorem{convention}[theorem]{Convention}
\newtheorem{case}{Case}
\newtheorem{step}{Step}
\numberwithin{equation}{section}
\numberwithin{figure}{section}
\numberwithin{table}{section}
\newcommand{\GL}{\mathrm{GL}}
\newcommand{\Z}{\mathbb{Z}}
\newcommand{\CP}{\mathbb{CP}}
\def\Z{\mathbb{Z}}
\newcommand{\Id}{\operatorname{Id}}
\def\sm{\smallsetminus}
\DeclareMathOperator\Arf{Arf}
\def\lk{\operatorname{lk}}
\def\ol#1{\overline{#1}{}}
\def\wt#1{\widetilde{#1}{}}
\def\sn{\mathrm{sn}}
\def\gt{g^\mathrm{top}_4}
\renewcommand{\top}{\mathrm{top}}
\newcommand{\ucp}{u_{\CP^2}}
\newcommand{\fr}{\mathrm{fr}}
\newcommand{\ucpbar}{u_{\overline{\CP^2}}}
\DeclareMathOperator{\sgn}{\text{sgn}}
\begin{document}
\title{Slicing knots in definite $4$-manifolds}

\author[{A.\ Kjuchukova}]{Alexandra Kjuchukova}
\address{Department of Mathematics, University of Notre Dame, Notre Dame, IN 46556 USA }
\email{akjuchuk@nd.edu}
\urladdr{https://math.nd.edu/people/faculty/alexandra-kjuchukova/}

\author[A.\ N.\ Miller]{Allison N.\ Miller}
\address{Department of Mathematics \& Statistics, Swarthmore College, 500 College Avenue, Swarthmore, PA. 19081}
\email{amille11@swarthmore.edu }
\urladdr{https://sites.google.com/view/anmiller/}

\author[A.\ Ray]{Arunima Ray}
\address{Max-Planck-Institut f\"{u}r Mathematik, Vivatsgasse 7, 53111 Bonn, Germany}
\email{aruray@mpim-bonn.mpg.de }
\urladdr{http://people.mpim-bonn.mpg.de/aruray/}

\author[S.\ Sakall{\i}]{S\"umeyra Sakall{\i}}
\address{Department of Mathematical Sciences, University of Arkansas, Fayetteville, AR 72701, USA}
\email{ssakalli@uark.edu}
\urladdr{https://sites.google.com/umn.edu/ssakalli/home}

\def\subjclassname{\textup{2020} Mathematics Subject Classification}
\expandafter\let\csname subjclassname@1991\endcsname=\subjclassname
\subjclass{
57K10, 
57N35, 
57N70, 
57R40.  
}

\begin{abstract} 
We study the \emph{$\CP^2$-slicing number} of knots, i.e.\ the smallest $m\geq 0$ such that a knot $K\subseteq S^3$ bounds a properly embedded, null-homologous disk in a punctured connected sum~$(\#^m\CP^2)^{\times}$.  We give a lower bound on the smooth $\CP^2$-slicing number of a knot in terms of its double branched cover, and we find knots with arbitrarily large but finite smooth $\CP^2$-slicing number. We also give an upper bound on the topological $\CP^2$-slicing number in terms of the Seifert form and find knots for which the smooth and topological $\CP^2$-slicing numbers are both finite, nonzero, and distinct. 
\end{abstract}

\maketitle

\section{Introduction}
\label{sec:introduction}

The study of slice knots was initiated in the 1950s by Fox and Milnor~\cite{fox-milnor} in the context of resolving singularities of surfaces embedded in $4$-manifolds. Since then, numerous connections have been established between sliceness and fundamental open problems in $4$-manifold topology, including the smooth Poincar\'e conjecture~\cite{man-and-machine} and the exactness of the surgery sequence for topological $4$-manifolds~\cite{casson-freedman-atomic} (see also~\cite{DET-book-enigmata}).  
Recent work~\cites{manolescu-marengon-piccirillo,mmsw,manolescu-piccirillo} indicates that slicing knots not only in $B^4$ but in more general definite $4$-manifolds may answer long-standing questions about the existence of exotic smooth structures in dimension four.
As an example, \cite{manolescu-piccirillo}*{Theorem~1.4} provides a list of 23 knots, such that if any of them bounds a smooth, properly embedded, null-homologous disk in $(\#^m\CP^2)\sm \mathring{B}^4$, for some $m$, then there exists an exotic smooth structure on $\#^m\CP^2$. 
Obstructing the sliceness of knots in progressively closer approximations of $B^4$ reveals intricate structure within the knot concordance group~\cites{COT1, COT2, CT, CHL09, CHL11}, and, in the case of approximations using definite $4$-manifolds, can distinguish between smooth concordance classes of topologically slice knots~\cites{CHH,cochran-horn,cha-kim-bipolar}. 

A knot $K\subseteq S^3$ is said to be \emph{slice} in a closed, smooth $4$-manifold $M$ if $K\subseteq \partial (M\setminus \mathring{B}^4)$ bounds a smooth, properly embedded disk in $M\setminus \mathring{B}^4$, and \emph{$H$-slice} if the disk can be chosen to be null-homologous. Of course, for $M=B^4$ the two notions coincide. 
The \emph{$\CP^2$-slicing number} of a knot $K$, denoted by $\ucp(K)$, is either the smallest $m\geq 0$ such that $K$ is $H$-slice in $\#^m \CP^2$,  or $\infty$ if no such $m$ exists. We define the $\overline{\CP^2}$-slicing number analogously and denote it by $\ucpbar(K)$. When $\ucp(K)<\infty$ (resp. $\ucpbar(K)<\infty$), we say that $K$ is \emph{positively slice} (resp. \emph{negatively slice}). 
Smoothly slice knots have $\ucp$ and $\ucpbar$ equal to zero.  In the converse direction, there exist knots with both $\ucp$ and $\ucpbar$ infinite~\cite{sato-definite}, including topologically slice knots. 

If a knot is $H$-slice in a closed, smooth, oriented $4$-manifold with positive definite intersection form, then its signature function is non-positive~\cite{CHH}*{Proposition~4.1}, so there exist knots, such as the right-handed trefoil, which have finite $\ucp$ but infinite $\ucpbar$ (\cref{ex:infinite-on-one-side}). In fact, by a version of the classical Murasugi-Tristram inequality (see~\cref{sec:background}) the values of the signature function give a lower bound on $\ucp$,  so there exist knots with arbitrarily large, but finite, $\ucp$ (\cref{ex:CNbound-2bridge}), though these necessarily have infinite $\ucpbar$. We find a new lower bound on $\ucp$ that can be applied to knots with trivial signature function,  in terms of the double branched cover. 

\begin{theorem}\label{thm:beast}
Let $K\subseteq S^3$ be a knot with $\sigma(K)=0$. Suppose  $K$ is $H$-slice in $\#^m\CP^2$ for some $0\leq m< \infty$. Then $\Sigma_2(K)$, the double
cover of $S^3$ branched along $K$, bounds a compact, smooth, oriented $4$-manifold $X$ with $b_2(X) =2m$, whose intersection form is positive definite and of half-integer surgery type.
\end{theorem}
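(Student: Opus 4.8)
The plan is to let $X$ be the double cover of $Z := \#^m\CP^2 \setminus \mathring{B}^4$ branched along a slicing disk for $K$, and to extract every stated property of $X$ from standard branched-covering technology together with the two hypotheses that $\sigma(K) = 0$ and that the disk is null-homologous. Concretely: by assumption $K$ bounds a smooth, properly embedded, null-homologous disk $D \subseteq Z$. Since $D$ is null-homologous and $H_1(Z) = 0$, the exterior $Z \setminus \nu(D)$ has $H_1 \cong \Z$ generated by a meridian of $D$; the resulting index-two subgroup of $\pi_1$ determines a connected double cover of $Z \setminus \nu(D)$, which extends across $\nu(D)$ to a branched double cover $\pi \colon X \to Z$. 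Then $X$ is a compact, smooth, oriented $4$-manifold (the branch surface $D$ is orientable), and $\partial X = \Sigma_2(\partial Z, \partial D) = \Sigma_2(K)$, as required.

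To compute $b_2(X)$ I would first show $b_1(X;\Q) = 0$. Writing $X$ as the union of $\nu(\widetilde{D}) \cong D^2 \times D^2$ and the unbranched double cover $\widetilde{Z \setminus \nu(D)}$, glued along a torus, a Mayer--Vietoris computation together with a transfer argument identifies $H_1(X;\Q)$ with the $(-1)$-eigenspace of the covering involution acting on $H_1$ of the double cover; since $D$ is null-homologous this eigenspace is governed by the Alexander module of $K$ evaluated at $-1$, whose $\Q$-dimension is zero because $\det(K) = |\Delta_K(-1)|$ is odd, hence nonzero. As $\Sigma_2(K)$ is then a rational homology sphere, Poincar\'e--Lefschetz duality gives $b_3(X;\Q) = b_1(X;\Q) = 0$, so from the multiplicativity of the Euler characteristic under branched covers,
\[
\chi(X) = 2\chi(Z) - \chi(D) = 2(m+1) - 1 = 2m + 1 = 1 + b_2(X),
\]
whence $b_2(X) = 2m$.

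Next I would pin down the intersection form. The $G$-signature theorem for the branched double cover $\pi\colon X \to Z$ along the properly embedded surface $D$ gives $\sigma(X) = 2\sigma(Z) - \tfrac12 e(D) - \sigma(K)$, where $e(D)$ is the normal Euler number of $D$ relative to the $0$-framing of $K$. A null-homologous, properly embedded disk has $e(D) = 0$ (compare $D$ with a Seifert surface for $K$ pushed into $Z$), and $\sigma(K) = 0$ by hypothesis, while $\sigma(Z) = \sigma(\#^m\CP^2) = m$; hence $\sigma(X) = 2m$. Since $\partial X = \Sigma_2(K)$ is a rational homology sphere, the intersection form of $X$ is nondegenerate of rank $b_2(X) = 2m$ and signature $2m$, and is therefore positive definite.

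It remains to see that the intersection form of $X$ is of half-integer surgery type; here I would fix a handle decomposition of $Z$ with a single $0$-handle and $m$ two-handles attached along a $+1$-framed unlink, put the slicing disk $D$ in a compatible position, and check that the induced handle decomposition of $X$ has a single $0$-handle and $2m$ two-handles whose attaching link is interchanged in pairs by the covering involution, so that its linking matrix has the shape required by the definition of half-integer surgery type. The main obstacle is precisely this last step: arranging $D$ and the handle decomposition of $Z$ compatibly and then following the branched lift carefully enough to recognize the resulting linking matrix; the vanishing $b_1(X) = 0$ also requires the Alexander-module input above, but that is by now a routine computation.
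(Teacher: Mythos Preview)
Your construction of $X$ as the branched double cover of the punctured $\#^m\CP^2$ over a null-homologous slice disk, and your computation that $b_2(X)=2m$ and that $Q_X$ is positive definite, match the paper's approach (the paper simply cites Cochran--Lickorish for these facts rather than redoing the Euler-characteristic and $G$-signature computation). One quibble: your justification of $b_1(X;\Q)=0$ via ``the Alexander module of $K$ evaluated at $-1$'' is not quite right, since the relevant module is that of the disk exterior in $Z$, not of the knot exterior in $S^3$; but this can be repaired.

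The real content of the theorem, however, is the half-integer surgery type of $Q_X$, and here your sketch has a genuine gap. You propose to ``put the slicing disk $D$ in a compatible position'' with respect to the standard handle decomposition of $Z$ and then read off a $(0\text{-handle}) + (2m\text{ two-handles})$ decomposition of $X$. But an arbitrary $H$-slice disk need not interact nicely with the cocores of the $+1$-framed $2$-handles, and in general $X$ simply does \emph{not} have such a decomposition. The paper's solution is to invoke the Cochran--Tweedy characterization (\cref{lem:slicetotwist}): $K$ is concordant to some $K'$ obtained from a \emph{ribbon} knot $R$ by $+1$-surgery on an $m$-component unlink $\gamma$ in $S^3\setminus R$. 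This gives a specific slice disk whose branched cover factors as $\Sigma_2(C)\cup_{\Sigma_2(K')}\Sigma_2(W)\cup_{\Sigma_2(R)}\Sigma_2(D_R)$, where $W$ is the trace of the $+1$-surgeries. The half-integer structure is then exhibited on $\Sigma_2(W)$: each $\gamma_i$ lifts to two curves $\widetilde\gamma_i,\widetilde\gamma_i'$, and after sliding one lifted handle over the other one obtains classes $x_i$ with $x_i\cdot x_j=2\delta_{ij}$; the remaining basis elements come from $k^2$-fold multiples of the other lifts (where $k^2=\det R$). Two subtleties your outline misses entirely are that (i) $\Sigma_2(R)$ is only a $\Z/2$-homology sphere, so framings of the lifted curves must be handled with care via the lifted spanning surfaces $F_i$, and (ii) this construction only yields an odd-index \emph{sublattice} of half-integer type inside $Q_X$; one then needs Owens' \cref{lem:mod-2-enough} and \cref{prop:finite-enough} to conclude that $Q_X$ itself is of half-integer type. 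Without the Cochran--Tweedy input and these lattice-theoretic lemmas, the last paragraph of your proposal cannot be completed.
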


Half-integer surgery type pairings are defined in \cref{def:half-int}. The manifold $X$ in the statement above is obtained as the double cover of $(\#^m\CP^2)\setminus \mathring{B}^4$ branched along the putative slice disk.  Our result can be seen as an extension of work of Owens on the unknotting number~\cites{Owens08, Owensslicing}, specifically~\cite{Owensslicing}*{Theorem~2}.   Owens leverages the fact that if a knot $K$ can be changed to the unknot (or, more generally a slice knot) by $m$ positive to negative crossing changes then $K$ is $H$-slice in $\#^m \CP^2$ in a particularly nice way (see~\cref{sec:background}).  However,  general $H$-sliceness in $\#^m \CP^2$ can be more complicated, involving both an initial concordance and generalized crossing changes, as described in \cref{lem:slicetotwist}. A generalization of~\cites{Owens08, Owensslicing} to slicing numbers of knots occurs in the work of Owens-Strle~\cite{owens-strle}*{Theorem~1} (see also~\cite{nagel-owens}*{Theorem~1.1}). In Theorem~\ref{thm:beast}, we extend Owens' ideas to the yet more general setting of $\CP^2$-slicing numbers. 

We extract the first intrinsically smooth lower bounds for $\ucp$ and $\ucpbar$  by combining \cref{thm:beast} with Donaldson's Theorem~A~\cite{donaldson}, which may be leveraged to obstruct $\Sigma_2(K)$ from bounding a definite manifold of the half-integer surgery type. Specifically we provide the first examples of knots with arbitrarily large $\ucp$ and trivial signature function, and the first examples of knots where both $\ucp$ and $\ucpbar$ are finite and arbitrarily large. 

\begin{theorem}\label{thm:finite-and-big}\label{thm:big-and-precise}
For any $n\geq 0$, there exists a knot $K$ such that $n\leq \ucp(K)< \infty$ and $n\leq \ucpbar(K)< \infty$. 
Also, for any $n\geq 0$ there exists a knot $J$ with trivial signature function and $\ucp(J)=n$.
\end{theorem}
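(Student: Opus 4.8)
The plan is to prove both statements of \cref{thm:finite-and-big} by exhibiting explicit families of knots and then using \cref{thm:beast} together with Donaldson's Theorem~A to bound the slicing numbers from below, while using an explicit construction (generalized crossing changes, as in \cref{lem:slicetotwist}) to bound them from above. For the second statement, the key point is to find, for each $n$, a knot $J_n$ with $\sigma(J_n)\equiv 0$ whose double branched cover $\Sigma_2(J_n)$ is a rational homology sphere that bounds a positive definite $4$-manifold of half-integer surgery type with $b_2=2k$ if and only if $k\geq n$. A natural source of candidates is connected sums of the form $J_n = \#^n J$ where $J$ is a single knot (e.g.\ a suitably chosen two-bridge knot, or a torus knot with its signature killed by connect-summing with its reverse mirror) whose double branched cover is a lens space; then $\Sigma_2(J_n)$ is a connected sum of lens spaces, and one analyzes which positive definite half-integer-surgery-type forms it can bound by a lattice-embedding argument in the style of Owens--Strle and Lisca. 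The signature-zero condition is arranged by construction; the subtlety is that one needs a knot whose \emph{signature function} (not just the signature at $-1$) is identically zero, so that the Murasugi--Tristram bound gives nothing and \cref{thm:beast} is genuinely doing the work.

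For the first statement I would look for knots $K_n$ with $\sigma(K_n)=0$ but whose branched double covers obstruct $H$-sliceness in $\#^m\CP^2$ for $m<n$ \emph{and} in $\#^m\overline{\CP^2}$ for $m<n$. Since passing from $\CP^2$ to $\overline{\CP^2}$ corresponds to replacing $K$ by its mirror $\overline{K}$ and $\Sigma_2(K)$ by $-\Sigma_2(K)$, a symmetric choice — for instance $K_n$ amphichiral, or $K_n=\#^n(J\csum\overline{J})$ for an appropriate $J$ — makes the lower bounds for $\ucp$ and $\ucpbar$ identical. One then checks, via \cref{thm:beast} and Donaldson, that neither $\Sigma_2(K_n)$ nor $-\Sigma_2(K_n)$ bounds a positive definite half-integer-surgery-type manifold with $b_2<2n$. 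Finiteness of both $\ucp(K_n)$ and $\ucpbar(K_n)$ is established directly: every knot can be unknotted by finitely many positive-to-negative crossing changes and finitely many negative-to-positive crossing changes, and each such crossing change realizes an $H$-slicing move into one copy of $\CP^2$ (respectively $\overline{\CP^2}$), so $\ucp(K_n)\leq u_-(K_n)<\infty$ and $\ucpbar(K_n)\leq u_+(K_n)<\infty$.

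Concretely, the steps in order: (1) fix the model knot $J$ (e.g.\ a two-bridge knot $K_{p/q}$ with $\Sigma_2=L(p,q)$ and zero signature function), verify $\sigma_\omega(J)\equiv 0$ using its Seifert form or Tristram--Levine computation; (2) set $J_n=\#^n J$ and $K_n = \#^n(J\csum\overline{J})$, and compute $\Sigma_2 = \#^n L(p,q)$, resp.\ $\#^n\big(L(p,q)\csum L(p,-q)\big)$; (3) compute the upper bounds $\ucp\leq n$, $\ucpbar\leq n$ by exhibiting the explicit $n$ crossing changes / band moves; (4) apply \cref{thm:beast}: if $\ucp(J_n)\leq m$ then $\#^n L(p,q)$ bounds a positive definite $X$ with $b_2(X)=2m$ of half-integer surgery type; (5) run the lattice obstruction — combine the half-integer-surgery-type constraint (\cref{def:half-int}) with Donaldson's theorem to conclude the intersection form embeds in $\Z^{2m}$ in a way that forces $2m\geq 2n$, i.e.\ $m\geq n$; (6) for $K_n$, do the same on both sides.

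The main obstacle will be step (5): extracting a sharp lower bound of exactly $n$ from the interaction of Donaldson's diagonalization with the half-integer-surgery-type condition. One must show that the only positive definite forms of half-integer surgery type bounding $\#^n L(p,q)$ with small $b_2$ are, up to the number of $\CP^2$ summands dictated by $\#^n L(p,q)$ bounding its ``obvious'' sharp $4$-manifold, no smaller than the expected one — this is exactly the type of delicate lattice-embedding analysis carried out by Lisca and by Owens--Strle for linear lattices, and adapting it to the half-integer surgery constraint (which enlarges the ambient lattice and imposes a parity/divisibility condition on one distinguished vector) is where the real work lies. A secondary difficulty is choosing $p/q$ so that the bound scales linearly in $n$ under connected sum rather than saturating; here one wants $L(p,q)$ to have a branched double cover obstruction that is ``additive enough,'' which may require picking a specific infinite family (e.g.\ $p/q = $ something like $(4n^2+1)/(2n)$ analogous to the examples of \cref{ex:CNbound-2bridge}) and verifying the obstruction by an explicit, if tedious, lattice computation that I would not carry out in full here.
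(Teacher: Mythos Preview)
Your approach to the second statement is essentially that of the paper: take connected sums of a fixed two-bridge (twist) knot with trivial signature function, and run a Lisca-style lattice embedding argument enhanced by the half-integer surgery constraint from \cref{thm:beast}. The paper carries out exactly this with the twist knots $K_a$, $a\geq 3$.

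For the first statement, however, there are two genuine gaps. First, your concrete candidate $K_n=\#^n(J\csum\overline{J})$ does not work: two-bridge knots are reversible, so $\overline{J}=-J$ in concordance and $J\csum\overline{J}$ is slice. Hence $K_n$ is slice and $\ucp(K_n)=\ucpbar(K_n)=0$. More generally, amphichirality does not sit well with the problem, since for amphichiral $K$ the double branched cover satisfies $\Sigma_2(K)\cong -\Sigma_2(K)$, and the lattice obstruction you want typically exploits an \emph{asymmetry} between the positive and negative definite fillings. Second, your finiteness argument is incorrect: it is not true that every knot can be transformed to a slice knot by only positive-to-negative crossing changes (e.g.\ the left-handed trefoil has $\ucp=\infty$), so ``$\ucp(K_n)\leq u_-(K_n)<\infty$'' is not automatic and requires a specific construction.

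The paper resolves both issues simultaneously by an asymmetric choice: it takes $K=(\#_{i=1}^n K_{a_i})\#(\#_{j=1}^m(-K_{b_j}))$ with $b_j>a_i\geq 3$, where the $K_a$ are twist knots. Finiteness of both $\ucp$ and $\ucpbar$ follows because each $K_a$ can be sliced by positive-to-negative crossing changes (reducing to the stevedore knot) \emph{and} unknotted by a single negative-to-positive crossing change. The lower bounds come from gluing the manifold $X$ of \cref{thm:beast} to an explicit negative definite plumbing bounded by $\Sigma_2(K)$ (a connected sum of lens spaces, each with both positive and negative definite linear plumbings), and the ordering $b_j>a_i$ is what makes the lattice embedding analysis give $\ucp(K)\geq m$ rather than merely $m-n$.
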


The knots used to prove \cref{thm:finite-and-big} are connected sums of twist knots.  Their key relevant property is that the double branched cover bounds both a positive definite and a negative definite $4$-manifold, which we can then cap off with the manifolds provided by~\cref{thm:beast} to get effective lower bounds on $\ucp$ and $\ucpbar$ simultaneously.  For just the second sentence of~\cref{thm:finite-and-big}, we may also use pretzel knots, for example, we will show in \cref{cor:bigstrand} that for the $(2k+1)$-strand pretzel knot $J=P(p,-p-2,p,-p-2,\dots,p)$ for odd $p\geq 3$, we have $\ucp(J)=J$. 

In order to provide further context for Theorem~\ref{thm:finite-and-big}, we briefly discuss why other knot invariants are not suitable for proving such a result. 
First, a  knot $K$ as above must have trivial signature function, and therefore the first statement in \cref{thm:finite-and-big} is inaccessible through signature bounds.  Additionally, while tools from gauge theory and Khovanov homology can be used to obstruct $H$-sliceness (as well as sliceness) in smooth, compact, oriented, definite $4$-manifolds, these are generally only in terms of the \textit{signs} of the invariants, not their values. For example, if a knot is $H$-slice in a positive definite $4$-manifold, its Heegaard-Floer $\tau$-invariant and Rasmussen's $s$-invariant are non-negative~\citelist{\cite{OS-fourball}*{Theorem~1.1}\cite{mmsw}*{Corollary~1.9}} (see~\citelist{\cite{CHH}*{Proposition~1.2}\cite{manolescu-piccirillo}*{Section~2}} for further such obstructions). On the other hand, for $K$ the $(p,1)$ cable of the right-handed trefoil, observe that $\ucp(K)=1$ (\cref{ex:satellite-slice}), despite the fact that~$\tau(K)=p$ may be large~\cite{hedden806knot}*{Theorem~1.2}.
Bauer-Furuta type invariants were used to obstruct sliceness in symplectic $4$-manifolds in \cite{bauer-furuta}; those techniques do not apply to our situation since neither $m\CP^2$ for $m>1$ nor $n\ol{\CP^2}$ for $n\geq 1$ is symplectic. 
Finally,  the aforementioned work of Owens~\cites{Owens08,Owensslicing} and Owens-Strle~\cite{owens-strle} effectively used Heegaard-Floer $d$-invariants to bound the (signed) unknotting and slicing numbers of knots.  Roughly speaking, they proved an analogue of \cref{thm:beast} in which one also concludes that the determinant of the intersection form $Q_X$ must divide the determinant of the knot $K$.  This allows one to restrict to a finite list of possible intersection forms,  each of which would impose constraints on the $d$-invariants of the double branched cover of $K$. However,  in the setting of $\CP^2$-slicing numbers, there is no such straightforward restriction on the determinant of $K$: a concordance between two knots can arbitrarily increase the determinant, while of course preserving the $\CP^2$-slicing number.

\subsection{\texorpdfstring{Topological $\CP^2$-slicing numbers}{Topological CP2 slicing numbers}} The quantities $\ucp$ and $\ucpbar$ both have purely topological counterparts. 
As above, we say that a knot $K\subseteq S^3$ is \emph{topologically slice} in a closed, topological $4$-manifold $M$ if $K\subseteq \partial (M\setminus \mathring{B}^4)$ bounds a locally flat, properly embedded disk in $M\setminus \mathring{B}^4$, and \emph{topologically $H$-slice} if the disk is further null-homologous.  
The \emph{topological $\CP^2$-slicing number} of a knot $K\subseteq S^3$, denoted by $\ucp^\top(K)$, is the smallest $m\geq 0$ such that $K$ is topologically $H$-slice in $\#^m \CP^2$. We define the topological $\overline{\CP^2}$-slicing number analogously and denote it by $\ucpbar^\top(K)$. The following gives an upper bound on $\ucp^\top$.

\begin{theorem}\label{thm:upperbound}
Let $K\subseteq S^3$ be a knot with Seifert matrix~$A$, with respect to some Seifert surface~$F$ and choice of generators~$\alpha_1, \dots, \alpha_{2g}$ for $H_1(F;\Z)$, where $g$ is the genus of $F$.

If there exists an integral $2g \times 2g$ matrix $B$ with $\det (tB-B^T)=\pm t^k$, for some $k$, and integers $\{c_{i,j}\}$ for $i=1,\cdots, n$ and $j=1,\dots, g$, such that $A$ can be decomposed as the difference 
\begin{align*}
A_{2g\times 2g}= B_{2g\times 2g}
- \sum_{i=1}^n \left[
 \begin{array}{cccc}
c_{i,1}^2 & c_{i,1}c_{i,2} & \dots &c_{i,1} c_{i,2g} \\
c_{i,1} c_{i,2}  & c_{i,2}^2 & \ddots & c_{i,2} c_{i,2g}\\
\vdots & \ddots & \ddots & \vdots \\
c_{i,1} c_{i,2g} & c_{i,2} c_{i,2g} & \dots & c_{i,2g}^2
\end{array}
\right],
\end{align*}
then $\ucp^\top(K) \leq n$. 
\end{theorem}

The seemingly technical condition on the Seifert matrix $A$ above comes from a natural geometric construction. The idea is that, under the given condition on $A$, there exists an $n$-component unlink $\gamma:= (\gamma_1 \sqcup \cdots\sqcup \gamma_n)$ in the complement of the Seifert surface $F$ for $K$ in $S^3$, such that the image of $K$ in the copy of $S^3$ produced by performing simultaneous $+1$-framed Dehn surgeries on $S^3$ along every~$\gamma_i$ has Alexander polynomial one, and is therefore topologically slice in $B^4$. In other words, $K$ is $n$  generalized positive crossing changes away from a topologically slice knot.

\subsection{Smooth vs topological slicing} We next consider the relationship between $\ucp^\top$ and $\ucp$, demonstrating how to use \cref{thm:upperbound} in practice. For any knot $K$ which is topologically but not smoothly slice in $B^4$ we have that $\ucp^\top(K)=0$ but $0\neq \ucp(K)\leq \infty$. 
For instance, let $K$ be the positive clasped untwisted Whitehead double of the right-handed trefoil knot. Since $K$ has Alexander polynomial one, it is topologically slice~\cite{FQ}*{Theorem~11.7B}, and since it can be unknotted by a single positive to negative crossing change, we have $\ucp(K)\leq 1$ (see \cref{sec:background} for further details and references). Moreover, $K$ is not smoothly slice (see e.g.\ ~\cite{Gom86}). Therefore, $\ucp(K)=1$ and $\ucp^\top(K)=0$. We find the first examples of knots where $\ucp$ and $\ucp^\top$ are finite, distinct and moreover simultaneously nonzero.

\begin{corollary}\label{thm:top-smooth-distinct}
Let $p\geq 3$ be odd.
\leavevmode
\begin{enumerate}
\item The pretzel knot $K_{p,2}:=P(p, -p-4, 3p+14)$ has $\ucp^\top(K_p)=1$ and $\ucp(K)=2$. Moreover, the elements of $\{K_{p,2}\}_p$ are distinct in topological concordance.  
\item The pretzel knot $K_{p,3}:=P(p, -p-6, 3p+22)$ has $\ucp^\top(K_p)=1$ and $\ucp(K)=3$. Moreover, the elements of $\{K_{p,3}\}_p$ are distinct in topological concordance.   
\end{enumerate}
\end{corollary}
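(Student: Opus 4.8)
The plan is to establish, for $j\in\{2,3\}$, writing $K_{p,j}=P(p,-p-2j,3p+8j-2)$ (so that $K_{p,2}$ and $K_{p,3}$ are exactly the knots in the statement), the four facts: (i)~$\ucp^\top(K_{p,j})\le 1$; (ii)~$\ucp(K_{p,j})\le j$; (iii)~$\ucp(K_{p,j})\ge j$; and (iv)~$K_{p,j}$ and $K_{p',j}$ are not topologically concordant when $p\ne p'$. Since $K_{p,j}$ has genus one, every step is a computation with a $2\times 2$ matrix or with a small lattice, and the pretzel parameters have been chosen precisely so that each step goes through.

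\emph{Upper bounds (i) and (ii).} For (i) I would take the genus-one Seifert matrix
\[
A=\begin{pmatrix}-j & \tfrac{-p-2j+1}{2}\\[3pt] \tfrac{-p-2j-1}{2} & p+3j-1\end{pmatrix}
\]
of $K_{p,j}$ and apply \cref{thm:upperbound} with $n=1$. Since $A-A^T=\sbmatrix{0&1\\-1&0}$, the matrix $B:=A+cc^T$ automatically has $b_{12}-b_{21}=1$, and a short computation shows that $\det(tB-B^T)=\pm t^k$ is then equivalent to the single equation $\det B=0$, which, with $c=(c_1,c_2)$, is one quadratic Diophantine equation in $c_1,c_2$ with indefinite principal part. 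The third pretzel parameter $3p+8j-2$ is exactly what makes this equation solvable in integers for every odd $p\ge 3$, so $\ucp^\top(K_{p,j})\le 1$; equivalently, this realizes $K_{p,j}$ as a single generalized positive crossing change away from an Alexander-polynomial-one, hence topologically slice, knot, as in the discussion after \cref{thm:upperbound}. For (ii): changing the band of $K_{p,j}$ with coefficient $-p-2j$ to one with coefficient $-p$ is $j$ generalized positive crossing changes and produces the pretzel knot $P(p,-p,3p+8j-2)$, which is ribbon — the simple closed curve on the pretzel surface running through the first two (equal-and-opposite) bands is unknotted and generates a metabolizer. By the crossing-change construction recalled in \cref{sec:background} (blowing up a copy of $\CP^2$ at each crossing change), a knot that is $j$ generalized positive crossing changes from a smoothly slice knot is $H$-slice in $\#^j\CP^2$; hence $\ucp(K_{p,j})\le j$.

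\emph{Lower bound (iii), the crux.} First note $\sigma(K_{p,j})=0$: indeed $A+A^T$ has determinant $-\det K_{p,j}=-\big((p+4j)^2-4j\big)<0$, and in fact the Levine--Tristram signature function of $K_{p,j}$ vanishes identically, since (as noted below) $\Delta_{K_{p,j}}$ has no zeros on the unit circle. So \cref{thm:beast} applies: if $\ucp(K_{p,j})=m$ then $\Sigma_2(K_{p,j})$ bounds a smooth positive-definite $4$-manifold $X$ with $b_2(X)=2m$ and intersection form of half-integer surgery type. Now $\Sigma_2(K_{p,j})$, the double branched cover of a $3$-strand pretzel knot, is a Seifert fibered rational homology sphere, hence bounds a negative-definite plumbed $4$-manifold $P$ with explicitly computable intersection form $Q_P$. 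Gluing $X$ to $-P$ along $\Sigma_2(K_{p,j})$ gives a closed smooth positive-definite $4$-manifold, so Donaldson's Theorem~A forces $Q_X\oplus(-Q_P)$ to be the standard diagonal lattice $\langle 1\rangle^{2m+b_2(P)}$; equivalently, $Q_X$ must occur as the orthogonal complement of a fixed embedded copy of $-Q_P$ in a diagonal lattice. The remaining task is to show that such a complement cannot be positive definite of rank $2m$ and of half-integer surgery type when $m\le j-1$. Here the half-integer-surgery-type hypothesis does the essential work: it cuts the possible complements down to a finite, analyzable list, and the parameters of $K_{p,j}$ are calibrated so that the obstruction first vanishes exactly at $m=j$. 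Carrying out this lattice-embedding analysis \emph{uniformly in $p$} — in particular isolating the bounded ``core'' of $Q_P$ that carries the obstruction independently of the number of $\langle -1\rangle$-summands, which grows with $p$ — is the step I expect to be the main obstacle.

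\emph{Non-concordance (iv).} From $A$ one computes $\Delta_{K_{p,j}}(t)=\det(tA-A^T)=t-N_{p,j}(t-1)^2$, where $N_{p,j}=\tfrac14\big((p+4j)^2-4j-1\big)$ is a positive integer, strictly increasing in $p$. Its discriminant $4N_{p,j}+1=(p+4j)^2-4j=\det K_{p,j}$ is never a perfect square for $p\ge 1$ (the factorization $(p+4j-m)(p+4j+m)=4j$ has no solution in positive integers of the required parity when $j\in\{2,3\}$), so $\Delta_{K_{p,j}}$ is irreducible over $\Q$ for all odd $p\ge 3$; in particular it has no unit-circle zeros, and $K_{p,j}$ is not topologically slice, so that $\ucp^\top(K_{p,j})=1$. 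If $K_{p,j}$ and $K_{p',j}$ were topologically concordant with $p\ne p'$, then $\Delta_{K_{p,j}}(t)\,\Delta_{K_{p',j}}(t)$ would satisfy the Fox--Milnor condition, i.e.\ equal $f(t)f(t^{-1})$ up to multiplication by $\pm t^k$ for some $f\in\Z[t]$. Since $\Delta_{K_{p,j}}$ and $\Delta_{K_{p',j}}$ are irreducible quadratics, the only possibility is that $f$ is irreducible with $\{f,\,t^{\deg f}f(t^{-1})\}$ equal (up to scalars) to $\{\Delta_{K_{p,j}},\Delta_{K_{p',j}}\}$; symmetry of Alexander polynomials then forces $\Delta_{K_{p,j}}=\Delta_{K_{p',j}}$ up to units, hence $N_{p,j}=N_{p',j}$ and $p=p'$, a contradiction. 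Thus the members of $\{K_{p,j}\}_p$ are pairwise distinct in topological concordance, which together with (i)--(iii) completes the proof.
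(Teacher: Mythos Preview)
Your outline matches the paper's strategy: (i) is the content of \cref{prop:ucp2topis1} (the paper first changes basis so that the top-left entry of the Seifert matrix becomes $-1$, which makes the decomposition in \cref{thm:upperbound} explicit rather than leaving a Diophantine equation to solve), (ii) and (iii) are \cref{cor:precise3strand}, and (iv) is the same Alexander-polynomial/Fox--Milnor computation you give. The genuine gap is (iii), which you yourself flag as ``the main obstacle'' and do not carry out.

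Here is what you are missing, and it is both simpler and different from what you anticipate. The canonical negative-definite plumbing for $\Sigma_2(P(p,\,-p-2j,\,r))$ with $r=3p+8j-2$ (\cref{fig:pretzel-plumbing} with $k=1$) has two linear chains of $(-2)$-vertices of lengths $p-1$ and $r-1$, a central $(-2)$-vertex, and a single leaf of weight $-p-2j$; there are no $\langle-1\rangle$-summands. In the embedding into the standard lattice, the $(-2)$-chains and the half-integer generators $u_\ell$ are forced (up to relabeling) to map as $e_i^s-e_i^{s+1}$ and $f_\ell-g_\ell$; this is the routine step and is where the uniformity in $p$ comes for free, not where the difficulty lies. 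Analyzing the central vertex and the leaf (\cref{prop:equations}) then reduces everything to a single Diophantine system: there must exist integers $a_1,a_2,b_1,\dots,b_m$ with
\[
a_1+a_2=1 \qquad\text{and}\qquad a_1^2\,p + a_2^2\,(3p+8j-2) + 2\sum_{\ell=1}^m b_\ell^2 = p+2j.
\]
Since $3p+8j-2>p+2j$ one has $a_2=0$, hence $a_1=1$, leaving $\sum_{\ell=1}^m b_\ell^2=j$. For $j=2$ this fails when $m\le 1$ (as $2$ is not a square) and for $j=3$ when $m\le 2$ (as $3$ is not a sum of two squares), giving $\ucp(K_{p,j})\ge j$. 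This is precisely the content of \cref{cor:precise3strand}, and the system is independent of $p$ once the chains are handled; so the obstacle you anticipate does not actually arise.
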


We expect that the gap between $\ucp$ and $\ucp^\top$ can be arbitrarily large.  It would be particularly interesting to find topologically slice knots with arbitrarily large and finite $\ucp$ and $\ucpbar$. For example, let $K$ denote the positively clasped untwisted Whitehead double of the right-handed trefoil.
As mentioned above, we know that $\ucp^\top(K)=0$ and $\ucp(K)=1$. 
We conjecture that $\ucp(\#^nK)=n$.  Note that $\ucpbar(\#^nK)=\infty$ for all $n\geq 1$, since for example $\tau(\#^n K)=n>0$.

\subsection{Connection with other invariants}

As mentioned above, the $\CP^2$-slicing number is bounded above by well-known knot invariants such as the unknotting and slicing numbers. Specifically, if a knot $K$ can be unknotted (or smoothly sliced) by $k$ positive to negative crossing changes then $\ucp(K)\leq k$. We remark that in all of our computations of $\ucp$ we obtain upper bounds by crossing changes, though our lower bounds are rather on the number of \emph{generalized crossings}; see \cref{sec:background} for further details and references.

Recall that the \emph{stablizing number} $\sn(K)$ for a knot $K$ with $\Arf(K)=0$ is the smallest $m\geq 0$ such that $K$ is $H$-slice in $\#^m(S^2\times S^2)$. We have that $\sn(K)\leq g_4(K)$ and $\sn^\top(K)\leq \gt(K)$, as shown in~\cite{CN}.\footnote{Conway-Nagel give a proof for the second inequality, but their argument also applies in the smooth setting.} Unlike the stabilizing number, there is no reason to expect that $\ucp$  will be restricted by the slice genus or even the Seifert genus. We show in~\cref{cor:precise3strand}(3) that for odd $p\geq 3$ and $r> p+6$, the $3$-strand pretzel knot $P(p,-p-6,r)$ has $\ucp(P(p,-p-6,r))=3$, while it has Seifert genus one. For $k\geq 0$, odd $p\geq 3$, and $r> p+2k$, we speculate that $\ucp(P(p,-p-2k,r))=k$. At present we do not have a lower bound strong enough to show this. Note that the values of many classical knot invariants are bounded by (some linear function of) the Seifert genus or slice genus, indicating the inherent difficulty in finding effective lower bounds on $\ucp$. Illustrating this phenomenon is the fact that there are no known examples of knots with high unknotting number relative to their Seifert genus~\cite{lewarkMO}. 

In contrast to the situation for $\ucp$, in the topological category, we have the following result. 

\begin{proposition}\label{prop:genus-bound}
Let $K$ be a knot with Seifert genus one. 
If $\sigma(K)=2$, then $\ucp^\top(K)= \infty$.  Otherwise, $\ucp^\top(K) \leq 4$. 
\end{proposition}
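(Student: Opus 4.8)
The plan is to split on the value of $\sigma(K)$, dispatching the $\sigma(K)=2$ case by the signature obstruction and the remaining cases by feeding a normalised genus-one Seifert matrix into \cref{thm:upperbound}.

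\textbf{The case $\sigma(K)=2$.} A genus-one Seifert surface $F$ for $K$ gives a $2\times 2$ Seifert matrix $A$, and $A+A^T$ is a symmetric integer matrix with $\det(A+A^T)=\Delta_K(-1)$ odd, hence nonsingular of rank $2$; thus $\sigma(K)=\sigma(A+A^T)\in\{-2,0,2\}$. If $\sigma(K)=2$ then $A+A^T$ is positive definite, so the Levine--Tristram signature at $-1$ is positive, and the (topological) Murasugi--Tristram inequality recalled in \cref{sec:background} — the topological incarnation of the signature obstruction behind \cite{CHH}*{Proposition~4.1} — shows $K$ is not topologically $H$-slice in any positive definite $4$-manifold. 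Hence $\ucp^\top(K)=\infty$. From now on assume $\sigma(K)\le 0$.

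\textbf{Reduction to a matrix problem.} Choose the basis of $H_1(F;\Z)$ so that $A=\begin{pmatrix}a&b\\ b+1&c\end{pmatrix}$; changing the basis by an element of $\SLev_2(\Z)$ preserves this normal form. By \cref{thm:upperbound} it suffices to produce $n\le 4$ vectors $v_1,\dots,v_n\in\Z^2$ such that $B:=A+\sum_i v_iv_i^T$ satisfies $\det(tB-B^T)=\pm t^k$. A direct computation gives $\det(tB-B^T)=(\det B)(t-1)^2+t$, and $B-B^T=A-A^T$ automatically, so the condition is exactly $\det B=0$. Writing the determinant of a $2\times 2$ matrix as the associated quadratic form,
\[
\det\Big(A+\textstyle\sum_i v_iv_i^T\Big)=\det A+\sum_i Q(v_i)+\sum_{i<j}(v_i\wedge v_j)^2,
\]
where $Q(x,y)=cx^2-(2b+1)xy+ay^2$ is the reduced Seifert form (of discriminant $1-4\det A$) and $v\wedge w$ is the $2\times 2$ determinant; the last sum is $\ge 0$ by Cauchy--Binet. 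The hypothesis $\sigma(K)\le 0$ says precisely that $A+A^T$ is not positive definite, i.e. either $\det A\le 0$ (and $Q$ is indefinite) or $\det A\ge 1$ and $Q$ is negative definite.

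\textbf{The construction and the source of the bound $4$.} The key elementary identity is that $A+\operatorname{diag}(0,w)$ has determinant $\det A+aw$, which vanishes for $w=-\det A/a$; in both cases above this $w$ is a \emph{nonnegative integer} exactly when $a\mid\det A$, equivalently (since $\det A\equiv-b(b+1)\pmod a$) when $a\mid b(b+1)$. Granting this divisibility, Lagrange's four-square theorem writes $w=s_1^2+s_2^2+s_3^2+s_4^2$, so $\operatorname{diag}(0,w)=\sum_{i=1}^4(s_ie_2)(s_ie_2)^T$ realises the required $n\le 4$, whence $\ucp^\top(K)\le 4$. It remains to arrange $a\mid b(b+1)$ (or, symmetrically, $c\mid b(b+1)$) after an $\SLev_2(\Z)$ change of basis — that is, to find a basis in which the reduced form $Q$ represents a divisor of $b(b+1)$. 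When $Q$ is indefinite this is straightforward via reduction theory (one can make the leading coefficient $1$, or otherwise small). The residual cases, where $Q$ is negative definite and possibly imprimitive — for example $A$ with $a=c=-3$, $b=1$, where no single generalised positive crossing change yields a trivial Alexander polynomial — are treated by hand, replacing $\operatorname{diag}(0,w)$ by $\operatorname{diag}(u,l)$ with $(a+u)(c+l)=b(b+1)$ (or by a single non-diagonal correction vector), and checking that the total number of rank-one summands never exceeds four; here $|a|$ is controlled by the Minkowski bound on $Q$, which keeps the bookkeeping finite.

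\textbf{Main obstacle.} The delicate step is the last one: certifying the bound $4$ in the genuinely negative definite, imprimitive cases, where there is no uniform move and one must juggle the $\operatorname{diag}$ identity against the arithmetic of the small-discriminant binary form $Q$. An alternative that may streamline this is to apply \cref{thm:upperbound} to a \emph{stabilised} (higher-genus) Seifert surface for $K$, trading the rigidity of the $2\times 2$ problem for extra hyperbolic summands; the cost is that one then argues with a larger, if more flexible, matrix. A secondary point to verify is that the signature obstruction invoked for $\sigma(K)=2$ is valid in the topological category, which it is through the Murasugi--Tristram inequality.
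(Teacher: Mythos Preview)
Your reduction is correct and clean: for a $2\times 2$ Seifert-normalised $B$ one has $\det(tB-B^T)=(\det B)(t-1)^2+t$, so the target in \cref{thm:upperbound} is exactly $\det B=0$, and your Cauchy--Binet identity $\det(A+\sum_i v_iv_i^T)=\det A+\sum_i Q(v_i)+\sum_{i<j}(v_i\wedge v_j)^2$ is right. The problem is the execution after that. Your main construction takes all $v_i$ parallel to $e_2$, which forces the divisibility $a\mid\det A$; you then assert this can be arranged by $\SLev_2(\Z)$-reduction in the indefinite case and ``by hand'' in the negative definite case, with ``$|a|$ controlled by the Minkowski bound''. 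But Minkowski only gives $|a|\lesssim\sqrt{|\operatorname{disc} Q|}=\sqrt{4\det A-1}$, and $\det A$ is unbounded over genus-one knots (e.g.\ $a=c=-N$, $b=0$), so there is no finite case list to check. Your own example $a=c=-3$, $b=1$ already shows the obstruction: every value of the framing form is a multiple of $3$, while $\det A=7$, so no basis change produces $a\mid\det A$. The fallback $(a+u)(c+l)=b(b+1)$ with $u,l$ each a sum of squares is not analysed either, and it is not clear why the total count of squares stays $\le 4$ uniformly. As written, the proof is a plan with its hardest step deferred.

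The paper avoids the divisibility issue by a different ansatz for the four rank-one corrections: one vector of the form $(1,u)$ and three of the form $(k,1),(\ell,1),(m,1)$. With $B=\left(\begin{smallmatrix}0&1\\0&*\end{smallmatrix}\right)$ this forces only the single scalar condition $-e-1=k^2+\ell^2+m^2$, where $e$ is the $(1,1)$ Seifert entry (the framing of the first basis curve); the off-diagonal and $(2,2)$ entries are then absorbed by the free parameters $u$ and $*$. So the task becomes: choose the basis curve $\alpha$ so that $-\fr(\alpha)-1$ is a sum of \emph{three} squares. By Legendre this means $\fr(\alpha)\not\equiv 0,3\pmod 4$, and the paper shows one can always find such $\alpha$ once some curve of negative framing exists (which follows from $\sigma(K)\le 0$): if the current $p=\fr(\gamma)$ fails mod $4$, replace $\gamma$ by a primitive class $(x,2)$ with $x\equiv 1\pmod 4$ and $x$ large, and compute $\fr(x,2)\equiv p+2\pmod 4$. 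This sidesteps any arithmetic of the form $Q$ beyond a single congruence, and gives $4$ directly without residual cases.
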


When $K$ has genus greater than one,  we do not expect the vanishing of Tristram-Levine signatures to guarantee that $\ucp^{\top}(K)$ is finite,  since Casson-Gordon signatures should give an additional obstruction (see Section~\ref{sec:background}). However, it is certainly possible that given a knot $K$,  either $\ucp^{\top}(K)$ is infinite or it is bounded above by some function of the Seifert genus of $K$. 

\subsection{Some open questions}
We finish the introduction by collecting some open questions about $\CP^2$-slicing numbers.  
First we note that while \cref{thm:finite-and-big} produces knots for which both $\ucp$ and $\ucpbar$ are finite and arbitrarily large, we have so far been unable to pin down both values precisely. 

\begin{problem}
For each $m, n \geq 0$, find a knot $K$ such that $\ucp(K)=m$ and $\ucpbar(K)=n$. 
\end{problem}

\cref{thm:top-smooth-distinct} shows that the values of $\ucp^\top$ and $\ucp$ can be distinct for a fixed knot. We expect that the gap between the two quantities can be arbitrarily large. 

\begin{problem}\label{prob:top-smooth}
For each $0\leq m \leq n \leq \infty$, find a knot $K$ with   $\ucp^\top(K)=m$ and $\ucp(K)=n$. 
\end{problem}

Note that the case above when $m=n$ is resolved by the torus knot $T_{2,2m+1}$ (or $-T_{2,3}$ for $m=n=\infty$). 

Finally, we showed in \cref{cor:precise3strand} that there exists a knot $K$ with Seifert genus one and $\ucp(K)=3$. We speculate that there might exist knots with genus one and arbitrarily large but finite $\ucp$. We showed in \cref{prop:genus-bound} that the analogous phenomenon does not occur for $\ucp^\top$.

\begin{problem}\label{prob:genus-one}
Are there knots with Seifert genus one and arbitrarily large but finite $\ucp$?
\end{problem}
The examples of \cref{cor:precise3strand} suggest candidates for an affirmative answer: it is possible that for $p$ odd, $k \in \mathbb{N}$, and $r>p+2k$, the $\CP^2$ slicing number $\ucp(P(p, -p-2k, r))$ is exactly $k$.  A similar question asks about $\ucpbar$ of pretzel knots of this form: it is easy to observe that this number is bounded above by $(r-p-2k)/2$, but 
finding a lower bound
is challenging.  We also note that the known lower bounds on $\ucp^\top$  coming from Tristram-Levine and Casson-Gordon signatures~\cite{CN}  are ill-suited to addressing this question, since they are roughly controlled by the $3$-genus of a knot. 

We finish this section with a natural generalization of the questions we address in this paper. 

\begin{problem}\label{prob:indefinite}
Given a knot $K$, denote
\[
X(K) := \{ (m,n)\in \Z_{\geq 0}\times \Z_{\geq 0} \mid   K \text{ is } H\text{-slice in } m\CP^2 \#  n\ol{\CP^2}\}.
\]
Two natural problems arise.  First,  for a fixed knot of interest $K$,  determine $X(K)$ precisely.  Second,  characterize which sets may occur as $X(K)$ for some knot $K$. 
\end{problem}

According to \cites{livingston-slicing, livingston-nullhomologous}, every knot with Seifert genus one can be converted to the unknot via two generalized crossing changes (one positive to negative and one negative to positive), and hence is $H$-slice in $\CP^2 \# \overline{\CP}^2$. We give some new examples of knots $K$, specifically $3$-strand pretzel knots, for which we determine the set $X(K)$ precisely, in~\cref{cor:indefinite}. In general, it is difficult to effectively obstruct sliceness in indefinite $4$-manifolds, compared to definite ones. For example, the version of \cref{thm:top-smooth-distinct} for slicing knots in connected sums of $S^2\times S^2$~ \cite{topdiffstabilising}*{Theorem~1.8} necessitated the development of a Seiberg-Witten $K$-theory for $3$-manifolds with involution (see also~\cite{manolescu-marengon-piccirillo}). 
 Even less is known about the second part of Problem~\ref{prob:indefinite}, besides the immediate  observation that such a set must have the property that whenever $(m,n)$ is in $X(K)$ for some $K$ then  $(m+1, n),(m,n+1)\in X(K)$ as well.  Both parts of this problem could equally well be posed in the topological category. 

\subsection*{Outline}
	\cref{sec:background} reviews some background and elementary observations. \cref{sec:beast} contains the proof of \cref{thm:beast}. In \cref{sec:alt} we apply \cref{thm:beast} to alternating knots and prove \cref{thm:finite-and-big}. In \cref{sec:pretzels} we apply \cref{thm:beast} to pretzel knots. In~\cref{sec:top-v-smooth} we prove \cref{thm:upperbound,prop:genus-bound}.

\subsection*{Conventions} All manifolds are assumed to be compact, connected, and oriented. Knots are assumed to be oriented, unless specifically mentioned otherwise.  Given a knot $K$, we refer to its mirror image as $\ol{K}$ and its reverse as $rK$. The concordance inverse is denoted $-K$. 
We denote the standard inner product on $\Z^N$ by $\langle \cdot ,\cdot\rangle$.
	
\subsection*{Acknowledgements}
We would like to thank Anthony Conway, Chuck Livingston, and Brendan Owens for helpful comments on a draft of this paper. 
A large portion of this work was carried out while AK, AR, and SS were at the Max Planck Institute for Mathematics. We are grateful to the MPIM for bringing us together and supporting our work. ANM was supported by NSF grant DMS-1902880 and is welcome at MPIM anytime.

\section{Background and elementary observations}\label{sec:background}

Recall that the \emph{unknotting number} $u(K)$ (resp.\ the \emph{slicing number} $u_s(K)$) of a knot $K\subseteq S^3$ is the least number of crossing changes needed to transform $K$ into an unknot (resp.\ a slice knot). If $K$ can be changed to the unknot (or a slice knot) by $m$ positive to negative crossing changes, then it is $H$-slice in $\#^m\CP^2$. To see this, take the track of the homotopy, blow up each double point with a $\CP^2$, resolve each of the double points, and cap off with a slice disk. (For more details on this construction see~\citelist{\cite{Cochran-Lickorish}*{Lemma~3.4}\cite{manolescu-piccirillo}*{Lemma~2.3}}.) 
The least number of positive to negative crossing changes needed to change $K$ to the unknot (resp.\ a slice knot) is called the \emph{positive unknotting number} $u^+(K)$ (resp.\ the \emph{positive slicing number} $u^+_s(K)$). We see then that 
\[
\ucp(K) \leq u_s^+(K) \leq u^+(K)
\]
for every knot $K$. 

Since a smooth $H$-slicing disk is in particular a topological $H$-slicing disk, we also have that 
\[
\ucp^\top(K)\leq \ucp(K)
\]
for every knot $K\subseteq S^3$. By changing orientations, we see that $\ucp(K)=\ucpbar(\ol{K})$ and $\ucp^\top(K)=\ucpbar^\top(\ol{K})$. Of course $\ucp(K)=\ucp(rK)$ and $\ucp^\top(K)=\ucp^\top(rK)$. Putting this together we see that all the flavors of $\CP^2$-slicing numbers of $\ol{K}$ and $-K$ coincide. While a knot's orientation does not influence the value of $\ucp$ or $\ucp^\top$, we include it in diagrams for concreteness, and we usually work with $-K$ rather than $\ol{K}$. In general, $\ucp(K)$ and $\ucpbar(K)$ are unrelated, in either category, as we see in the following example. 

\begin{example}\label{ex:infinite-on-one-side}
Let $K$ be the right-handed trefoil. Then $\ucp^\top(K)=\ucp(K)=1$, since $K$ can be changed to the unknot by a positive to negative crossing change, and it is not smoothly (nor topologically) slice. However, since $\sigma(K)=-2$, it follows that $\ucpbar^\top(K)=\ucpbar(K)=\infty$  by~\cite{CHH}*{Proposition~4.1}.
\end{example}

For the rest of this section, we restrict ourselves to describing properties of $\ucp$, since analogous results hold for $\ucpbar$ after changing orientations. 

\begin{figure}[htb]
\centering
\begin{tikzpicture}
\node[anchor=south west,inner sep=0] at (0,0){\includegraphics[width=13cm]{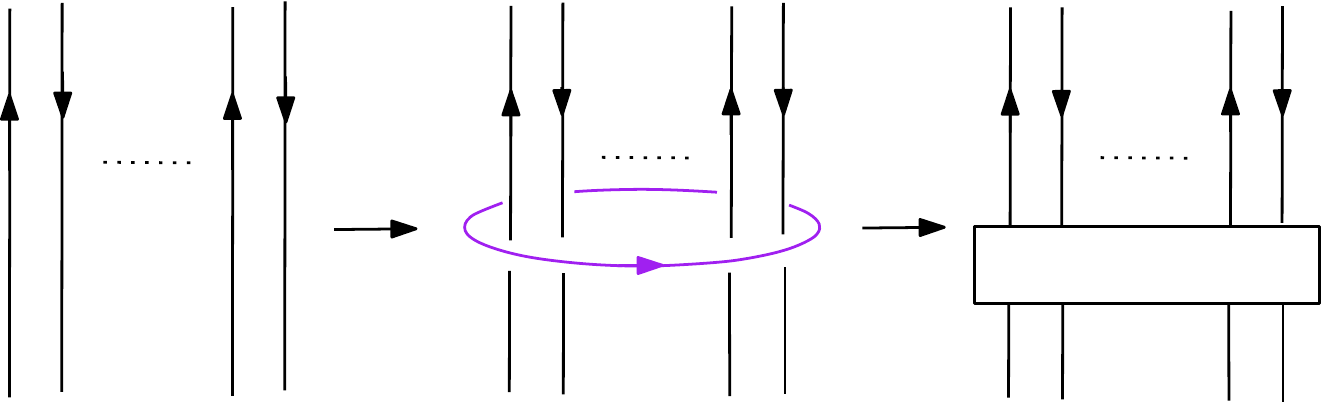}};
\node at (3.2,0.5) {$K$};
\node at (8.2,1.4) {$+1$};
\node at (11.3,1.35) {$-1$};
\node at (6.5,1) {\pink{$U$}};
\end{tikzpicture}
\caption{Left: We begin with an algebraically zero collection of strands in the diagram for a knot $K\subseteq S^3$. Middle: Perform $+1$-framed Dehn surgery on an unknot $U$ in $S^3$, encircling the given strands. Right: In the resulting copy of $S^3$, the strands gain a full negative twist. Passing from left to right in this figure consists of adding a generalized positive crossing.}\label{fig:gencrossing}
\end{figure}

\begin{definition}[\cite{CochranTweedy}*{Definition~2.7}]
\emph{Adding a generalized positive crossing} to a knot $K$ consists of the move shown in \cref{fig:gencrossing}. Specifically, a full positive twist is added to an algebraically zero collection of strands in a diagram for $K$. Equivalently, one finds an unknotted, nullhomologous circle $U\subseteq S^3\setminus K$ and performs $+1$ surgery on $S^3$ along~$U$. 

There is also the analogous notion of adding a generalized negative crossing to a knot $K$.
\end{definition}

The minimum number of generalized crossings one must add to the unknot to construct a given knot is called its \emph{untwisting number}, and was first defined in~\cite{mathieu}. The untwisting number of knots was investigated in~\cites{livingston-slicing,CochranTweedy,KI,KI2,mccoy-nullhomologous,mccoy-gaps,livingston-nullhomologous}. It is straightforward to see that if a knot is obtained from a slice knot by adding $m$ positive generalized crossings, then it is slice in $\#^m \CP^2$~\cite{CochranTweedy}*{Lemma~2.8}. The following result of Cochran and Tweedy 
determines the extent to which the converse holds. 

\begin{lemma}[\cite{CochranTweedy}*{Theorem~5.7}]\label{lem:slicetotwist}
$K$ is $H$-slice in $\#^m \CP^2$ for some $m\geq 0$ if and only if $K$ is concordant to some knot $K'$ that is obtained from a ribbon knot $R$ by inserting $m$ generalized positive crossings. 
\end{lemma}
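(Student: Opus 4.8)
The plan is to prove the two implications separately; the forward (``if'') direction is essentially \cite{CochranTweedy}*{Lemma~2.8} combined with the fact that concordance preserves $H$-sliceness in a fixed $4$-manifold. A ribbon knot $R$ bounds a ribbon disk in $B^4 = \#^0\CP^2 \setminus \mathring{B}^4$, which is vacuously null-homologous. Inserting a generalized positive crossing is realized by taking the unknot $U$ encircling the relevant algebraically zero collection of strands, pushing it into the ambient $4$-manifold $W$, and attaching a $2$-handle along it with framing $+1$; this changes $W$ to $W \natural(\CP^2\setminus\mathring{B}^4)$, and the trace of the corresponding $+1$-surgery is an annulus from the old knot to the new one. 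Gluing this annulus onto the existing disk gives a disk for the new knot, still null-homologous because the collection of strands is algebraically zero (so the annulus meets the new cocore algebraically zero times). After $m$ steps, $R$ with $m$ generalized positive crossings inserted is $H$-slice in $\#^m\CP^2$, and gluing a product concordance onto the boundary sphere gives the same for any knot concordant to it.

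For the ``only if'' direction, let $\Delta \subseteq N := \#^m\CP^2 \setminus \mathring{B}^4$ be a null-homologous slicing disk for $K$, and fix a handle decomposition $N = B^4 \cup h_1 \cup\cdots\cup h_m$ in which each $h_i$ is a $2$-handle attached along one component of a $+1$-framed $m$-component unlink $L = L_1\sqcup\cdots\sqcup L_m\subseteq S^3 = \partial B^4$. The idea is to isotope $\Delta$ into a normal form in which, after a collar of $\partial N$, it meets each $h_i$ in a disjoint union of parallel copies of the core and then becomes a properly embedded planar surface $\Sigma$ in the $0$-handle $B^4$, with $\partial\Sigma$ equal to a knot $K'$ together with several parallel $+1$-framed pushoffs of the components of $L$. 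The portion of $\Delta$ in the collar records the failure of $\Delta$ to be in standard form; it is a cobordism from $K$ to $K'$, and it is exactly this cobordism that the concordance in the statement absorbs. That $\Sigma$ is planar follows from an Euler characteristic count using that $\Delta$ is a disk.

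It then remains to blow down the $m$ two-handles. The hypothesis $[\Delta]=0$ forces $\Delta\cdot S_i = 0$ for the generating spheres $S_i\subseteq N$, hence (via the pairing $H_2(N,\partial N)\times H_2(N)\to\Z$) that $\Delta$ meets the cocore of $h_i$ algebraically zero times; so the pushoffs of $L_i$ in $\partial\Sigma$ form an algebraically zero collection of strands through $L_i$. Blowing down $L_i$ removes $h_i$, adds a full positive twist to those strands --- the reverse of a generalized positive crossing, compare \cref{fig:gencrossing} --- and turns the $+1$-linked pushoffs of $L_i$ into an unlink bounding disjoint disks. Capping $\Sigma$ off with these disks after all $m$ blowdowns produces a disk in $B^4$ bounded by the image $R$ of $K'$, so $R$ is slice, while $K'$ is recovered from $R$ by inserting $m$ generalized positive crossings; since $K$ is concordant to $K'$, this is the asserted conclusion.

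The main obstacle is the normal form in the ``only if'' direction: showing that the null-homologous disk $\Delta$ can be isotoped so that, away from a boundary collar, it meets each $2$-handle only in copies of the core. I expect this to require making $\Delta$ Morse with respect to the handle structure, cancelling the superfluous critical points using that the attaching link is an unlink, and sweeping the remaining complexity into the collar so that it becomes an honest concordance. Closely tied to this is the upgrade from ``$R$ is slice'' to ``$R$ is ribbon'': this should come from choosing the normal form compatibly with the radial Morse function on the $0$-handle so that the resulting disk has no local minima, and this is the most delicate point --- the place where the ``ribbon'' in the statement is genuinely earned.
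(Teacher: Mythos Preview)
The paper does not contain a proof of this lemma; it is stated with a citation to \cite{CochranTweedy}*{Theorem~5.7} and used as a black box in the proof of \cref{thm:beast}. So there is no ``paper's own proof'' to compare against.

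That said, your outline is broadly in line with the argument in Cochran--Tweedy. The forward direction is standard and is essentially their Lemma~2.8 together with the observation that concordance preserves $H$-sliceness. For the converse, the strategy of putting the slicing disk in normal form relative to the handle decomposition of $(\#^m\CP^2)^\times$, so that it meets each $2$-handle only in parallel copies of the core, and then blowing down, is indeed the approach taken there. You have correctly flagged the two points where real work is required: the normal form isotopy (pushing critical points off the $2$-handles into a collar, which becomes the concordance) and the upgrade from ``slice'' to ``ribbon'' (arranging the remaining disk in $B^4$ to have no local maxima). Your proposal is an honest sketch rather than a proof --- you say as much --- and the details of those two steps are exactly what the cited reference supplies.
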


\begin{remark}\label{rem:negging}
Note that if a knot $K$ is obtained from a knot $R$ by adding generalized positive crossings, then $R$ is obtained from $K$ by adding generalized negative crossings. In particular, if $R$ is topologically slice and $R$ is obtained from $K$ by adding $m$ generalized negative crossings, then $K$ is topologically $H$-slice in $\#^m \CP^2$. 
\end{remark}

For a knot $K$,  let $\sigma_K \colon S^1 \to \Z$ denote the Levine-Tristram signature function. The values of the latter provide effective lower bounds on $\ucp^\top$ by the following result~\cites{murasugi-signature,levine-signature, tristram,kaufman-taylor-signature,cappell-shaneson-signature,viro-signature,gilmer-signature}. 

\begin{proposition}
\label{prop:CNbound}
If $K$ is topologically $H$-slice in  $\#^m \CP^2$,  then 
for any $\omega \in S^1$ with $\Delta_K(\omega) \neq 0$, 
\[-2m \leq \sigma_K(\omega) \leq 0.\]
\end{proposition}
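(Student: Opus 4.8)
The plan is to reduce the statement to the classical Murasugi--Tristram--Levine inequality applied to a suitable surface in a suitable $4$-manifold, which is exactly the setting in which that inequality is usually stated. Suppose $K$ is topologically $H$-slice in $\#^m\CP^2$, so there is a locally flat, properly embedded, null-homologous disk $D$ in $W:=(\#^m\CP^2)\setminus\mathring{B}^4$, with $\partial W=S^3$ and $\partial D=K$. I would first record the algebraic topology of the pair $(W,K)$: $W$ is simply connected, $H_2(W;\Z)\cong\Z^m$ with positive definite intersection form (diagonal, all $+1$), $b_1(W)=b_3(W)=0$, and $\partial W=S^3$. Since $D$ is null-homologous, $H_1(W\setminus D;\Z)\cong\Z$ generated by a meridian of $K$, so there is a well-defined infinite cyclic cover and we can form the twisted (Atiyah--Patodi--Singer / Casson--Gordon style) intersection form over $\Q(t)$, or equivalently work with the unitary representation sending the meridian to $\omega$ for $\omega\in S^1$.

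The key step is to invoke the version of the Murasugi--Tristram inequality for properly embedded surfaces in $4$-manifolds with boundary, in the form due to the authors listed in the citation (going back to Murasugi, Levine, Tristram, and refined by Kauffman--Taylor, Cappell--Shaneson, Viro, Gilmer, and others): if $\Sigma$ is a locally flat, properly embedded, null-homologous surface in a compact oriented $4$-manifold $W$ with $\partial(W,\Sigma)=(S^3,K)$, and $\omega\in S^1$ satisfies $\Delta_K(\omega)\neq 0$, then
\[
\bigl|\,\sigma_K(\omega) - \sign_\omega(W)\,\bigr| \;\leq\; b_2(W) - b_2(\Sigma) + \text{(correction terms that vanish here)},
\]
where $\sign_\omega(W)$ is the ordinary signature of $W$ when the representation is trivial on $\pi_1(W)$ — which is our case, since $H_1(W\setminus D)\to H_1(W)$ is zero. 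Because $\Sigma=D$ is a disk, $b_2(D)=0$, the genus defect vanishes, and the Euler-characteristic correction is absorbed; because the intersection form of $\#^m\CP^2$ is positive definite of rank $m$, we have $\sign_\omega(W)=\sign(W)=m$ and $b_2(W)=m$. Substituting, $|\sigma_K(\omega)-m|\leq m$, i.e.\ $0\leq \sigma_K(\omega)\leq 2m$. Applying the same argument to the mirror $\ol{K}$ (which is $H$-slice in $\#^m\ol{\CP^2}$), or equivalently noting $\sigma_{\ol K}(\omega)=-\sigma_K(\omega)$ and that $\ol K$ is then $H$-slice in a \emph{negative} definite manifold so the one-sided inequality there reads $-2m\le \sigma_{\ol K}(\omega)\le 0$, gives $-2m\leq\sigma_K(\omega)\leq 0$ in the original normalization. (The sign conventions should be pinned down once and for all against \cite{CHH}*{Proposition~4.1}, which already gives $\sigma_K(\omega)\leq 0$; the content we are adding is the lower bound $-2m$.)

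I expect the main obstacle to be purely bookkeeping: matching sign and orientation conventions across the many cited sources for the Murasugi--Tristram inequality, and verifying carefully that the Euler characteristic / genus correction terms in the general inequality really do collapse to nothing when $\Sigma$ is a null-homologous disk and the representation is trivial on $\pi_1(W)$. A secondary point requiring a line of justification is that local flatness of $D$ suffices to apply the $4$-manifold signature inequality — this is standard in the topological category via the existence of normal bundles for locally flat submanifolds of codimension two (Freedman--Quinn), so the $G$-signature/APS machinery applies verbatim. Once those conventions are fixed, the inequality is a direct substitution, and there is no category-specific difficulty since everything is topological.
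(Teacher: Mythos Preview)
Your approach is essentially the same as the paper's: the paper does not give a proof but simply cites the classical Murasugi--Tristram inequality in its $4$-manifold form (the list of references after the proposition, with the most general version being \cite{CN}*{Theorem~3.8}), and you are spelling out the substitution $\sigma(W)=b_2(W)=m$ into that inequality.

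One genuine bookkeeping error is worth flagging. You write the inequality as $|\sigma_K(\omega)-\sigma(W)|\leq b_2(W)$, obtain $0\leq\sigma_K(\omega)\leq 2m$, and then try to repair the sign by passing to the mirror. But that repair does not work: if $K$ is $H$-slice in $\#^m\CP^2$ then $\ol K$ is $H$-slice in $\#^m\ol{\CP^2}$, and applying your inequality there (with $\sigma=-m$, $b_2=m$) gives $-2m\leq\sigma_{\ol K}(\omega)\leq 0$, which after $\sigma_{\ol K}=-\sigma_K$ returns you to $0\leq\sigma_K(\omega)\leq 2m$, not the desired conclusion. The actual fix is that in the conventions used here (matching \cite{CHH}*{Proposition~4.1}) the general inequality reads $|\sigma_K(\omega)+\sigma(W)|\leq b_2(W)$ with a plus sign, whence $|\sigma_K(\omega)+m|\leq m$ and so $-2m\leq\sigma_K(\omega)\leq 0$ directly. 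So your diagnosis that the obstacle is sign-matching is correct, but the mirror trick is not the remedy; you should instead state the inequality with the correct sign from the outset.
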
 
The most general form of the above statement, for immersed null-homologous concordances between colored links in a general $4$-manifold is given in~\cite{CN}*{Theorem 3.8}. 

Casson-Gordon signatures should also give lower bounds for $\ucp$ and  $\ucpbar$,  via a direct adaptation of the proof of~\cite{CN}*{Theorem~4.12} to apply to $H$-sliceness in general simply connected 4-manifolds. (We remark that the adjective `even' will be absent from the statement of such a generalization, since $\#^m \CP^2$ is not spin.) We expect that such a lower bound could be used to give an alternate proof of the second statement of Theorem~\ref{thm:finite-and-big}.

\begin{example}\label{ex:CNbound-2bridge}
 Let $K$ denote the connected sum of $m$ copies of the right-handed trefoil, for $m\geq 0$. Then $K$ can be unknotted by $m$ positive to negative crossing changes, so $\ucp(K)\leq m$. On the other hand, $\sigma(K)=-2m$, so indeed $\ucp(K)=m$. The identical argument also applies to the torus knot $T_{2,2m+1}$. 

Similarly, consider the $2$-bridge knot $J_m$, for $m\geq 0$, shown in \cref{fig:CNbound-2bridge}. We know that $\ucp(J_m)\leq m$ for $m\geq 0$, since $J_m$ can be changed to the stevedore knot $J_0$ (which is slice) by $m$ positive to negative crossing changes. On the other hand, using the Goeritz matrix one can compute that $\sigma(J_m)=-2m$, and so $\ucp(J_m)=m$ by~\cite{CN}*{Theorem~3.8}.
\begin{figure}[htb]
\centering
\begin{tikzpicture}
\node[anchor=south west,inner sep=0] at (0,0){\includegraphics[width=7cm]{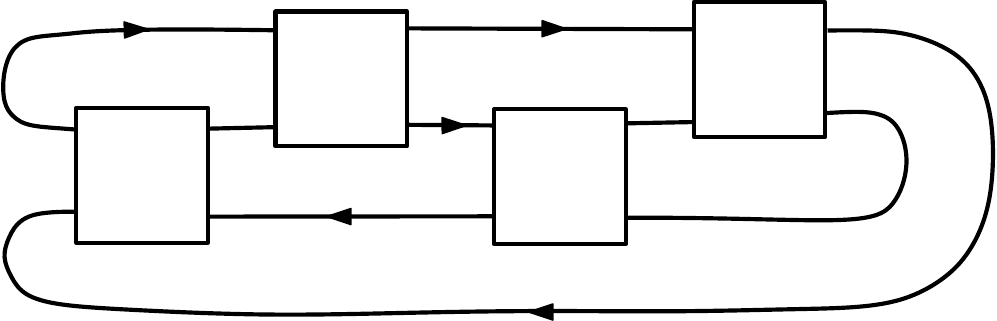}};
\node at (1,1) {$-3$};
\node at (2.4,1.7) {$2m$};
\node at (3.9,1) {$-1$};
\node at (5.3,1.7) {$2$};
\end{tikzpicture}
\caption{The knot $J_m$ from \cref{ex:CNbound-2bridge}. Each box indicates the number of positive half-twists.}
\label{fig:CNbound-2bridge}
\end{figure}

All of the above knots have negative signature and therefore infinite $\ucpbar$ (cf. the knots in \cref{thm:finite-and-big}, for which $\ucp$ and $\ucpbar$ are both large and finite). 
\end{example}

We now return to the remark made in \cref{sec:introduction}, that the $\tau$ invariant can sometimes detect that the unknotting numbers $\ucp$ or $\ucpbar$ are infinite but that it otherwise does not in general help compute their value. In the example used there, we observed that for every $p\geq 1$ there exists a knot with the property that $\tau(K_p)=p$ and $\ucp(K_p)=1$.  Let $K_p$ denote the $(p, 1)$ cable of the right-handed trefoil. Indeed, $\tau(K_p)=p$, and hence $\ucp(K_p)>0$.  
The latter claim follows from the fact that the right-handed trefoil is $H$-slice in $\CP^2$ and the following result.

\begin{proposition}\label{ex:satellite-slice}
Given $K\subseteq S^3$ and a pattern $P\subseteq S^1\times D^2$ with $P(U)$ slice in $B^4$, if $K$ is ($H$-)slice in a $4$-manifold $W$, then so is $P(K)$.
\end{proposition}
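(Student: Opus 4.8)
\textit{Overall strategy.} The plan is to perform the satellite operation entirely \emph{inside a tubular neighborhood of a slicing disk for $K$}. Such a neighborhood is a genuine $4$-ball, and inside it the $0$-framed tubular neighborhood of $K\subseteq S^3$ appears as the standard unknotted solid torus, so a slicing disk for $P(U)$ can simply be grafted in.

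\textit{Setup.} Concretely, I would argue as follows. Write $W^\times:=W\setminus\mathring{B}^4$, with $\partial W^\times = S^3\supseteq K$, and let $\Delta\subseteq W^\times$ be a properly embedded disk with $\partial\Delta=K$ (null-homologous, in the $H$-slice case). Since $\Delta\cong D^2$ is contractible its normal bundle is trivial, so I would fix an identification $\nu\Delta\cong\Delta\times D^2\cong D^2\times D^2$ of a closed tubular neighborhood, arranged so that $\nu\Delta\cap\partial W^\times=\partial\Delta\times D^2$ is a tubular neighborhood $\nu K$ of $K$ in $S^3$. Then $\partial\nu\Delta$ is diffeomorphic to $S^3$, with the decomposition into $\partial D^2\times D^2$ and $D^2\times\partial D^2$ being a genus-one Heegaard splitting, in which $\nu K=\partial D^2\times D^2$ is the standard unknotted solid torus; one checks moreover that this identification equips $\nu K\cong S^1\times D^2$ with the $0$-framing of $K$, because the framing it induces on $K$ equals the self-intersection in $W^\times$ of the closed surface obtained by capping $\Delta$ with a Seifert surface pushed into a collar, and that number vanishes exactly when $[\Delta]=0\in H_2(W^\times,\partial W^\times)$. (For ordinary sliceness one must instead begin with a slicing disk inducing the $0$-framing; this is automatic once $\Delta$ is $H$-slice, which is the only case we use.)

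\textit{Grafting in the pattern disk.} Having set this up, let $\iota\colon S^1\times D^2\xrightarrow{\ \cong\ }\nu K$ be the chosen identification; then $(\partial\nu\Delta,\iota(P))$ is precisely the pair $(S^3,P(U))$. Since $P(U)$ is slice in $B^4$ and $\nu\Delta\cong B^4$, there is a properly embedded disk $D_P\subseteq\nu\Delta$ with $\partial D_P=\iota(P)$. Reading $D_P$ back inside $W^\times$: its interior lies in $\inte(\nu\Delta)\subseteq\inte(W^\times)$ and its boundary $\iota(P)$ lies in $\nu K\subseteq\partial W^\times=S^3$; but $\nu K$ here \emph{is} the $0$-framed tubular neighborhood of $K$ in $S^3$, so $\iota(P)$ is the satellite knot $P(K)$. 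Hence $D_P$ is a properly embedded disk in $W^\times$ bounding $P(K)$, so $P(K)$ is slice in $W$. For the $H$-slice refinement, inside $H_2(\nu\Delta,\nu K)\cong\Z$---generated by $[\Delta]$, with connecting map the identification $H_2(\nu\Delta,\nu K)\xrightarrow{\cong}H_1(\nu K)\cong\Z$---one has $[D_P]=w\,[\Delta]$, where $w$ is the winding number of $P$, since $\partial[D_P]=[P(K)]=w$ while $\partial[\Delta]=[K]=1$; applying the map induced by $(\nu\Delta,\nu K)\hookrightarrow(W^\times,\partial W^\times)$ gives $[D_P]=w[\Delta]=0$. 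The argument is identical in the topological category, using normal bundles of locally flat disks.

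\textit{Main obstacle.} The only non-formal step, and the one I would be careful with, is the framing bookkeeping above: checking that the product structure on $\nu\Delta$ can be chosen to induce the $0$-framing on $K$ (so that $\iota(P)$ really is $P(K)$ and not a twisted satellite), and that a slicing disk for $P(U)$ in the abstract $4$-ball $\nu\Delta$ transplants faithfully into $W^\times$. Everything else is just unwinding the definition of the satellite operation.
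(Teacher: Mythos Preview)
Your proof is correct and essentially the same idea as the paper's, packaged slightly differently. The paper removes a small $4$-ball from the interior of $W^\times$ meeting $\Delta$ in a disk, obtaining an annulus $A$ from $K$ to the unknot; it then embeds $P\times[0,1]$ in a tubular neighbourhood $A\times D^2\cong S^1\times D^2\times[0,1]$ to produce an annulus from $P(K)$ to $P(U)$, and caps off the $P(U)$ end with a slice disk in the excised $B^4$. You instead observe directly that $\nu\Delta\cong B^4$ with $\nu K$ the standard unknotted solid torus in its boundary, so $\iota(P)$ is already $P(U)$ there and can be sliced in one step. The two constructions are interchangeable: your $\nu\Delta$ with a small interior ball removed is exactly the paper's annulus picture. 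You are more explicit about the framing check (that the product structure on $\nu\Delta$ restricts to the $0$-framing on $K$ precisely when $\Delta$ is null-homologous), which the paper leaves implicit, and your computation $[D_P]=w[\Delta]$ for the $H$-slice refinement is a bit more direct than the paper's appeal to Mayer--Vietoris.
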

\begin{proof}
Since $K$ is slice in $W$ via a disk $\Delta$, there is an embedded annulus $S^1\times [0,1]\subseteq W\setminus (\mathring{B}^4\sqcup \mathring{B}^4)$ cobounded by $K\subseteq S^3$ and the unknot. Use an embedding $P\times [0,1]\subseteq A\times D^2\cong S^1\times D^2\times [0,1]$ to get an annulus in $W\setminus (\mathring{B}^4\sqcup \mathring{B}^4)$ cobounded by $P(K)\subseteq S^3$ and $P(U)$. Since $P(U)$ is slice in $B^4$, we can cap off one end of the annulus to get a disk bounded by $P(K)$ in $W\setminus \mathring{B}^4$. A Mayer-Vietoris argument shows that the disk is trivial in homology, if $\Delta$ were null-homologous at the start of the proof.
\end{proof}

A similar proof as above shows that if $K$ is slice in $W$ (but not necessarily $H$-slice) and $Q\subseteq S^1\times D^2$ is a winding number $0$ pattern with $Q(U)$ slice in $B^4$, then $Q(K)$ is $H$-slice in $W$.

\subsection{Branched covers and plumbings}\label{sec:branched-covers}

\begin{notation}
Let $Y$ be a codimension 2 properly embedded submanifold of $X$, equipped with a map $H_1(X \smallsetminus Y) \to \Z/2$. We denote the double branched cover of $X$ along $Y$ induced by this map by $\Sigma_2(X, Y)$. Occasionally we write simply $\Sigma_2(Y)$ or $\Sigma_2(X)$, when the other argument is implicit. 

We will only need to consider the following three cases: 
\begin{itemize}
\item $X=S^3$ and $Y$ a knot $K\subseteq S^3$, in which case we write $\Sigma_2(K)$;
\item $X$ is a compact, simply connected, oriented $4$-manifold with connected boundary and $Y$ is a properly embedded, oriented, null-homologous disk;
\item $X$ is a compact, simply connected, oriented $4$-manifold whose boundary has two connected components and $Y$ is a properly embedded, oriented, null-homologous annulus between knots in the two boundary components. 
\end{itemize}
In each of these cases, it is straightforward to see from the Mayer-Vietoris sequence that there is a canonical isomorphism $H_1(X\smallsetminus Y)\cong \Z$, where the generator is given by a positive meridian (of the knot, disk, or annulus, respectively), and we will use the map to $\Z/2$ obtained by composing with the quotient map $\Z\to \Z/2$ .
\end{notation}

In \cref{sec:alt,sec:pretzels} we will need explicit representations of the $2$-fold branched covers of $S^3$ branched along twist knots (as well as their connected sums) and pretzel knots. We briefly indicate now how these can be obtained. 

\begin{example}\label{ex:2-bridge-lens}
Seifert observed that for $K$ a $2$-bridge link the double branched cover $\Sigma_2(K)$ is a lens space~\cite{schubert-2bridge}*{Satz 6}. Therefore the classification of lens spaces~\cite{reidemeister} (see also~\cite{brody}) informs the classification of $2$-bridge links, and it was shown by Schubert in~\cite{schubert-2bridge} that the correspondence between $2$-bridge links and lens spaces is a bijection up to certain links with linking number zero (see~\cite{burde-zieschang-book}*{Theorem~12.6, Remark~12.7}). Finally it was shown by Hodgson and Rubinstein that every lens space is the double branched cover for a unique link in $S^3$, which must in particular be a $2$-bridge link~\cite{hodgson-rubinstein}*{Corollary~4.12}. To summarize, given coprime integers $p>q>0$ and a continued fraction expansion
\[
\frac{p}{q}=c_1+\cfrac{1}{c_2+\cfrac{1}{\ddots+\cfrac{1}{c_n}}} =:[c_1,c_2,\dots,c_n]^+
\]  
with $c_1,c_2,\dots,c_n \in \Z$, the lens space $L(p,q)$ is (orientation preserving) homeomorphic to $\Sigma_2(K)$ for $K$ the $2$-bridge link corresponding to the list $[c_1,c_2,\dots, c_n]$ in Conway's $4$-plat notation~\citelist{\cite{conway-4plat}\cite{rolfsen}*{p.\ 303}}. In particular, for the twist knot $K_a$ depicted in~\cref{fig:2bridge-diag}, the double branched cover $\Sigma_2(K_a)$ is the lens space $L(4a+1,2)$, corresponding to the continued fraction $2a+\frac{1}{2}$.\footnote{As always, one must be careful with orientation conventions. Ours matches that of~\cite{lisca} and disagrees with that of~\cite{hodgson-rubinstein} (see~\cite{hodgson-rubinstein}*{Remark~4.13}).} See~\cite{burde-zieschang-book}*{Chapter~12} for further details about $2$-bridge knots and links.
\end{example}

Given $p>q>0$ coprime integers, the lens space $L(p,q)$ is the boundary of both a positive definite and a negative definite $4$-manifold. Specifically, if we have a continued fraction expansion
\begin{equation}\label{eq:negative-cont-frac}
\frac{p}{q}=a_1-\cfrac{1}{a_2-\cfrac{1}{\ddots-\cfrac{1}{a_n}}}=:[a_1,a_2,\dots,a_n]^-
\end{equation}
with integers $a_1,a_2, \dots, a_n\geq 2$ then $L(p,q)$ is the boundary of the plumbed $4$-manifold $W(p,q)$ shown in~\cref{fig:plumbing-background} (see e.g.\ \cite{rolfsen}*{p.\ 272}), whose intersection form is easily seen 
to be negative definite, e.g.\ using Sylvester's criterion. To get a positive definite filling for $L(p,q)$, find a negative definite filling for $-L(p,q)$, which is (orientation preserving) homeomorphic to $L(p,p-q)$, and then switch orientations. 

\begin{figure}[htb]
\begin{tikzpicture}
\node[anchor=south west,inner sep=0] at (0,0){\includegraphics[width=5cm]{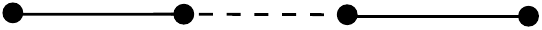}};
\node at (0,-0.2) {$-a_1$};
\node at (1.7,-0.2) {$-a_2$};
\node at (3.2,-0.2) {$-a_{n-1}$};
\node at (4.9,-0.2) {$-a_n$};
\end{tikzpicture}
\caption{A linear plumbing diagram for the $4$-manifold $W(p,q)$ with boundary $L(p,q)$. Here, $\frac{p}{q}$ has the continued fraction expansion shown in~\eqref{eq:negative-cont-frac}, with $a_1,a_2,\dots,a_n\geq 2$. Adjacent generators have intersection $+1$.  }\label{fig:plumbing-background}
\end{figure}

\begin{convention}
A weighted tree $G$, e.g.\ in~\cref{fig:plumbing-background}, produces a Kirby diagram for a $4$-manifold $W(G)$ by the convention that each vertex $v$ with weight $w$ corresponds to an unknot $U(v)$ with framing $w$, where $U(v)$ and $U(v')$ are clasped together whenever there is an edge joining $v$ and $v'$. For example, the diagram in \cref{fig:short-stick} gives a Hopf link where the components have framing $-2a_i-1$ and $-2$. The intersection form on the $4$-manifold given by the weighted graph is represented by its adjacency matrix, where the diagonal entries are the vertex weights (see~\cite{GS}*{Section~6.1} for further details). Specifying a matrix involves choosing a basis for $H_2(W(G);\Z)$. As usual, for each $v$ we get a basis element by taking the union of a disk bounded by $U(v)\subseteq S^3$ and the core of the attached $2$-handle, and we may choose orientations so that the edges in $G$ correspond to intersection $+1$, or alternatively to intersection $-1$. We will use the first convention in the proof of \cref{prop:two-bounds,prop:equations} and the second in the proof of \cref{prop:connectedsumsoftwist}. 
\end{convention}

\begin{example}\label{ex:pretzel-cover}
Given a pretzel knot $K:=P(q_1,q_2,\dots,q_{p+n})$, for integers $p,n\geq 0$, the double branched cover $\Sigma_2(K)$ is a Seifert fibered space with $p+n$ singular fibers~\cite{montesinos}. If $\tfrac{1}{q_1}+\frac{1}{q_2}+\cdots+\frac{1}{q_{p+n}}>0$ then $\Sigma_2(K)$ bounds a plumbed $4$-manifold with negative definite intersection form~\cite{neumann-raymond}*{Theorem~5.2}. The plumbing graph is called the \emph{canonical negative plumbing tree}, when it satisfies a certain list of conditions~\cite{neumann}; in particular the weights of vertices with valence $\leq 2$ must be $\leq -2$. In the case where $p=n+1$ of the parameters of $K$ are odd, positive, and $\geq 3$, and the remaining $n$ parameters are odd, negative and $\leq -3$, the canonical negative definite plumbing tree has the form shown in \cref{fig:pretzel-plumbing}.
Note in particular that the oriented homeomorphism type of $\Sigma_2(K)$ does not depend on the ordering of the parameters~$\{q_i\}$. 
\end{example}
 
\subsection{Integral lattices}
\begin{definition}
An \emph{integral lattice} is a pair $(G,Q)$ where $G$ is a finitely generated free abelian group and $Q\colon G\times G\to \Z$ is a symmetric bilinear form.
\end{definition}

We will consider the integral lattice determined by the intersection form on the absolute second homology (modulo torsion) with integer coefficients of a $4$-manifold.  We remark that when the 4-manifold has boundary,  this intersection form will generally not be unimodular; but in all of our examples the boundary will be a rational homology sphere and so it will at least be nondegenerate. 

We will show in \cref{thm:beast} that if a knot $K$ is positively slice, then the double branched cover of $K$ must bound a $4$-manifold whose intersection lattice is of the following type.

\begin{definition} \label{def:half-int}
A nondegenerate symmetric bilinear pairing $Q \colon \Z^{2m} \times \Z^{2m} \to \Z$ is said to be of \emph{(positive) half-integer surgery type} if there exists an ordered basis $u_1, \dots, u_m, v_1, \dots v_m$ for $\Z^{2m}$ such that the matrix of $Q$ with respect to this basis is of the form
\[
\left[
\begin{array}{cc}
2I_{m \times m} & I_{m \times m}\\
I_{m \times m} & A_{m \times m}
\end{array}
\right]
\]
for some integer matrix $A$, where $I_{m\times m}$ denotes the $m\times m$ identity matrix. 

The pairing $Q$ is said to be of negative half-integer surgery type if the above holds but with $2I_{m\times m}$ replaced by $-2I_{m\times m}$. 
\end{definition}

In the proof of \cref{thm:beast}, we will use the following results from~\cite{Owensslicing} concerning half-integer surgery type lattices.

\begin{lemma}[\cite{Owensslicing}*{Lemma~2.2}]\label{lem:mod-2-enough}
Let $Q$ be a $2m \times 2m$ integer block matrix consisting of $m\times m$ blocks of the form 
\[Q=
\left[
\begin{array}{cc}
2I_{m \times m} & *\\
* & *
\end{array}
\right]
\]
which is congruent modulo $2$ to the matrix $
\left[
\begin{array}{cc}
2I_{m \times m} & I_{m \times m}\\
I_{m \times m} & A_{m \times m}
\end{array}
\right]
$
for some integer matrix $ A_{m \times m}
$. Then there exists a matrix $P=\left[
\begin{array}{cc}
I_{m \times m} & *\\
0_{m\times m} & R_{m \times m}
\end{array}
\right]\in \GL(2m, \Z)$
so that 
\[
P^TQP=\left[
\begin{array}{cc}
2I_{m \times m} & I_{m \times m}\\
I_{m \times m} & A'_{m \times m}
\end{array}
\right],
\]
where $A'\equiv A \mod{2}$. 
\end{lemma}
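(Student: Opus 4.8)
The plan is to treat the congruence-modulo-$2$ hypothesis as the crucial input and to build the change-of-basis matrix $P$ one column-block at a time, modifying only the last $m$ basis vectors while leaving the first $m$ alone — hence the block form $P=\sbmatrix{I & * \\ 0 & R}$ with $R\in\GL(m,\Z)$. First I would set up notation: write the given matrix as $Q=\sbmatrix{2I & B \\ B^T & C}$, where by hypothesis $B\equiv I\pmod 2$ and $C\equiv A\pmod 2$. The goal is to choose unimodular $R$ and an integer matrix $S$ so that conjugating by $P=\sbmatrix{I & S \\ 0 & R}$ turns $B$ into exactly $I$ and keeps $C$ of the right parity. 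A direct block computation gives
\[
P^TQP=\left[\begin{array}{cc} 2I & 2S+BR \\ (2S+BR)^T & \ast\end{array}\right],
\]
so the off-diagonal block becomes $2S+BR$ and the new bottom-right block is $R^T C R + R^TB^TS + S^TBR + 2S^TS$, which is automatically congruent mod $2$ to $R^TCR\equiv R^TAR\pmod 2$.

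The heart of the argument is therefore to find $R\in\GL(m,\Z)$ and an integer matrix $S$ with $2S+BR=I$, i.e. $BR\equiv I\pmod 2$, together with the requirement that the resulting bottom-right block be congruent to $A$ (not just to some $A'\equiv A$) — but reading the statement carefully, one only needs $A'\equiv A\pmod 2$, so this second condition is free once the first is met. Thus the single real task is: given an integer matrix $B$ with $B\equiv I\pmod 2$, produce $R\in\GL(m,\Z)$ with $BR\equiv I\pmod 2$. But $B\equiv I\pmod 2$ already says $B$ reduces to the identity over $\F_2$, so I may simply take $R=I$ and $S=\tfrac12(I-B)$, which is an integer matrix precisely because $B\equiv I\pmod2$. (If one instead starts from the weaker-looking hypothesis that $Q$ is merely congruent mod $2$ to a half-integer-surgery matrix, the same reduction applies after first replacing $B$ by its reduction, since congruence mod $2$ of the full matrices forces the top-left block to already be $2I$ on the nose by the displayed block shape, and the off-diagonal block to be odd-entried, i.e. $\equiv I\pmod 2$ after a unimodular mod-$2$ adjustment.) With $P=\sbmatrix{I & \frac12(I-B) \\ 0 & I}$ one checks $P^TQP=\sbmatrix{2I & I \\ I & A'}$ with $A'=C + B^T\cdot\frac12(I-B) + \frac12(I-B)^T B + 2\cdot(\text{sym})$, and $A'\equiv C\equiv A\pmod 2$ as required.

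I would then verify $P\in\GL(2m,\Z)$: it is block upper triangular with identity diagonal blocks, hence $\det P=1$, and its inverse $\sbmatrix{I & -\frac12(I-B)\\ 0 & I}$ is again integral, so $P$ is genuinely in $\GL(2m,\Z)$ and has exactly the prescribed shape $\sbmatrix{I & * \\ 0 & R_{m\times m}}$ with $R=I$. Finally I would assemble the pieces: state the explicit $P$, record the block computation of $P^TQP$, and note the parity bookkeeping $A'\equiv A\pmod 2$.

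The main obstacle is essentially psychological rather than technical: the statement is phrased to look like it requires solving a nontrivial integral equation for $R$, but the mod-$2$ hypothesis collapses the problem, and the only thing to be careful about is interpreting precisely which block of $Q$ the hypothesis controls and checking that the division by $2$ in the definition of $S$ is legitimate (it is, exactly because the relevant block is congruent to $I$, equivalently has all-odd entries on the diagonal and all-even entries off it, after a harmless unimodular adjustment). I would spend most of the write-up making that parity reduction airtight and the block matrix multiplication explicit, rather than on any deep structural input.
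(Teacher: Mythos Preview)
The paper does not give its own proof of this lemma; it is quoted from Owens.  So there is no comparison to make with a proof in the paper, and I will only comment on the correctness of your argument.

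Your reduction to the block equation $2S+BR=I$ is correct, and the choice $R=I$, $S=\tfrac12(I-B)$ does produce a $P\in\GL(2m,\Z)$ of the required shape with $P^TQP=\sbmatrix{2I & I\\ I & A'}$.  However, the assertion that the bottom-right block is ``automatically congruent mod $2$ to $R^TCR$'' is false.  Reducing mod $2$ and using $B\equiv I$, $R\equiv I$, one finds
\[
A'\;\equiv\; C \;+\; S+S^T \pmod 2,
\]
and with $S=\tfrac12(I-B)$ the matrix $S+S^T=I-\tfrac12(B+B^T)$ has off-diagonal $(i,j)$ entry $-\tfrac12(B_{ij}+B_{ji})$, which need only be an integer, not an even one.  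For example, with $m=2$ and $B=\sbmatrix{1&2\\0&1}$ one gets $S+S^T\equiv\sbmatrix{0&1\\1&0}\pmod 2$, so $A'\not\equiv C\equiv A\pmod 2$.

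The gap is reparable within your framework: one must use the freedom in $R$.  Writing $R=I+2R'$ (any such $R$ satisfies $BR\equiv I\pmod 2$), the corresponding $S$ satisfies $S+S^T\equiv \big(I-\tfrac12(B+B^T)\big)-(R'+R'^{T})\pmod 2$, and one can choose $R'$ strictly upper triangular with entries $R'_{ij}\equiv \tfrac12(B_{ij}+B_{ji})\pmod 2$ for $i<j$ to kill this term; then $R$ is unit upper triangular, hence unimodular.  With this amended choice of $R$ your argument goes through and yields $A'\equiv A\pmod 2$ as claimed.  As written, though, the proof establishes only that $Q$ is of half-integer surgery type, not the sharper congruence on $A'$.
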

In a similar vein, the following result gives another convenient means of detecting an integer lattice of half-integer surgery type.

\begin{proposition}[\cite{Owensslicing}*{Proposition~2.4}]\label{prop:finite-enough}
Suppose $Q$ is a positive definite integer lattice and
$L$ is a sublattice of $Q$ of odd index. If $L$ is of half-integer surgery type, then so is $Q$.
\end{proposition}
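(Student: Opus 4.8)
The plan is to reduce the statement to \cref{lem:mod-2-enough}. Write $(G,Q)$ for the given positive definite lattice, so $G\cong\Z^{2m}$, and let $u_1,\dots,u_m,v_1,\dots,v_m$ be a basis of the odd-index sublattice $L\subseteq G$ in which $Q|_L$ has matrix $\sbmatrix{2I & I\\ I & A}$ (writing $I=I_{m\times m}$). The vectors $u_1,\dots,u_m$ are mutually orthogonal with $Q(u_i,u_i)=2$, so they are the only candidates for the ``$2I$-part'' of a basis of $G$; the first step I would carry out is to show that $N:=\langle u_1,\dots,u_m\rangle$ is a primitive sublattice of $G$, so that $u_1,\dots,u_m$ extends to a $\Z$-basis $u_1,\dots,u_m,w_1,\dots,w_m$ of $G$, in which $Q$ has Gram matrix $\sbmatrix{2I & B\\ B^T & C}$ with $B=\big(Q(u_i,w_j)\big)$.

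The primitivity of $N$ is where the odd-index hypothesis is essential, and I expect it to be the main obstacle. As a lattice $N\cong A_1^{\,m}$, so its discriminant group $N^\ast/N\cong(\Z/2)^m$ is a $2$-group. The saturation $\ol N:=(N\otimes\Q)\cap G$ maps to $N^\ast$ by $x\mapsto Q(x,\,\cdot\,)|_N$, which is well defined and injective on $\ol N$ because $N$ is nondegenerate, and carries $N$ to its standard image; hence $\ol N/N$ embeds in $N^\ast/N$ and is a finite $2$-group. On the other hand $\ol N/N=\tors(G/N)$, and since $L/N\cong\Z^m$ is torsion-free this torsion subgroup injects into $G/L$, which has odd order. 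A group that is simultaneously a $2$-group and of odd order is trivial, so $\ol N=N$.

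Next I would fix up $B$ modulo $2$. Since $[G:L]$ is odd, reduction induces an isomorphism $L/2L\xrightarrow{\ \sim\ }G/2G$ respecting $Q\bmod2$, so $\{\ol u_i,\ol v_j\}$ and $\{\ol u_i,\ol w_j\}$ are two $\F_2$-bases of $G/2G$ sharing the $\ol u_i$. Writing $\ol v_j=\sum_\ell p_{\ell j}\ol u_\ell+\sum_k r_{kj}\ol w_k$ forces $R:=(r_{kj})\in\GL(m,\F_2)$, and pairing with $\ol u_i$, using $Q(u_i,u_k)\equiv0$ and $Q(u_i,v_j)=\delta_{ij}$, gives $\ol B\,R=I$ over $\F_2$. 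Lifting $R$ to some $S\in\GL(m,\Z)$ (the reduction $\GL(m,\Z)\to\GL(m,\F_2)$ is onto) and replacing each $w_j$ by $\sum_k S_{kj}w_k$ preserves the property of being a $\Z$-basis of $G$ together with $u_1,\dots,u_m$, keeps the top-left block equal to $2I$, and replaces $B$ by $BS$, whose reduction mod $2$ is $\ol B\,R=I$.

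In this final basis the Gram matrix of $Q$ has the shape $\sbmatrix{2I & \ast\\ \ast & \ast}$ and is congruent mod $2$ to $\sbmatrix{2I & I\\ I & A'}$, where $A'$ is its bottom-right block; so \cref{lem:mod-2-enough} produces $P\in\GL(2m,\Z)$ with $P^TQP=\sbmatrix{2I & I\\ I & A''}$, i.e.\ $Q$ is of half-integer surgery type. The only substantive step is the primitivity argument in the second paragraph --- it is precisely the oddness of $[G:L]$, combined with $A_1^{\,m}$ having a $2$-group discriminant, that makes it work; the rest is routine linear algebra over $\Z$ and $\F_2$, modulo some care in tracking the two bases of $G/2G$.
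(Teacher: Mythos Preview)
Your argument is correct. The paper does not include its own proof of this proposition---it simply cites \cite{Owensslicing}*{Proposition~2.4}---so there is nothing to compare against here; your route via primitivity of $N=\langle u_1,\dots,u_m\rangle$, extension to a $\Z$-basis of $G$, adjusting the off-diagonal block mod $2$, and then invoking \cref{lem:mod-2-enough} is sound.

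One small simplification for your third step: since $v_j\in L\subseteq G$, you can write $v_j=\sum_\ell p_{\ell j}u_\ell+\sum_k r_{kj}w_k$ with \emph{integer} coefficients right away, and the block matrix $\sbmatrix{I & P\\ 0 & R}$ expresses the $L$-basis in the $G$-basis, so $|\det R|=[G:L]$ is odd; hence $R\bmod 2\in\GL(m,\F_2)$ immediately, without first passing to $G/2G$. You still need to replace $R$ by a lift in $\GL(m,\Z)$ (since $\det R$ need not be $\pm1$), exactly as you do.
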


\section{\texorpdfstring{Proof of \cref{thm:beast}}{Proof of Theorem~1.2}}\label{sec:beast}

We restate \cref{thm:beast} and then give the proof.

\begin{reptheorem}{thm:beast}
Let $K\subseteq S^3$ be a knot with $\sigma(K)=0$. Suppose  $K$ is $H$-slice in $\#^m\CP^2$ for some $0\leq m< \infty$. Then $\Sigma_2(K)$, the double
cover of $S^3$ branched along $K$, bounds a compact, smooth, oriented $4$-manifold $X$ with $b_2(X) =2m$, whose intersection form is positive definite and of half-integer surgery type.
\end{reptheorem}

\begin{proof}
Since $K$ is $H$-slice in $\#^m\CP^2$, there exist knots $K', R\subseteq S^3$ as in \cref{lem:slicetotwist}, so that $R$ is ribbon, $K$ is concordant to $K'$, and $K'$ is obtained from $R$ by inserting $m$ positive generalized crossings. In other words, there is a link diagram $R\sqcup \gamma_1\sqcup \dots \sqcup \gamma_m\subseteq S^3$ such that the curves $\gamma_1, \dots, \gamma_m$ appear as the standard unlink, each $\gamma_i$ has trivial linking number with $R$, and performing $+1$-framed Dehn surgery on each component of $\gamma_1, \dots, \gamma_m$ transforms $R$ into~$K'$ (see Figure \ref{InsTw}).

\begin{figure}[htb]
\centering
\begin{tikzpicture}
\node[anchor=south west,inner sep=0] at (0,0){\includegraphics[width=15cm]{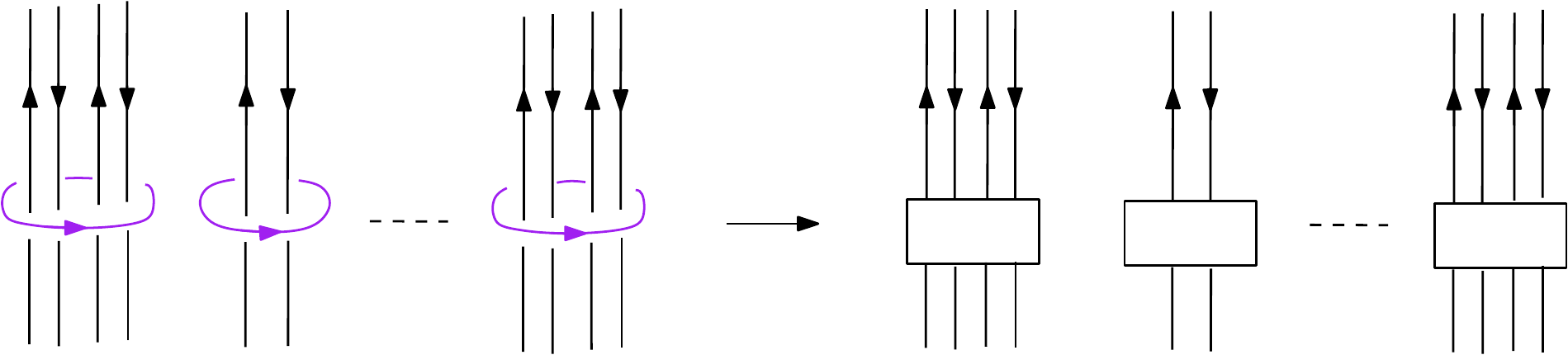}};
\node at (3,-0.25) {$R$};
\node at (1.6,1.75) {\pink{$\gamma_1$}};
\node at (3.25,1.75) {\pink{$\gamma_2$}};
\node at (6.35,1.75) {\pink{$\gamma_m$}};
\node at (1.5,1.1) {$+1$};
\node at (3.2,1.1) {$+1$};
\node at (6.3,1.1) {$+1$};
\node at (9.25,1.2) {$-1$};
\node at (11.35,1.17) {$-1$};
\node at (14.35,1.15) {$-1$};
\node at (12,-0.25) {$K'$};
\end{tikzpicture}
\caption{Proof of~\cref{thm:beast}. The knots $R$ and $K'$ are shown in black, and the link $\gamma_1\sqcup \gamma_2 \sqcup \cdots \sqcup \gamma_m$ in purple. We perform $+1$-framed Dehn surgery on $S^3$ along each $\gamma_i$, or equivalently, add generalized positive crossings to $R$ guided by $\gamma_1\sqcup \gamma_2 \sqcup \cdots \sqcup \gamma_m$. Compare with \cref{fig:gencrossing}.} \label{fig:schematic-beast-setup}\label{InsTw}
\end{figure}
\begin{figure}[htb]
\centering
\begin{tikzpicture}
\node[anchor=south west,inner sep=0] at (0,0){\includegraphics[width=11.5cm]{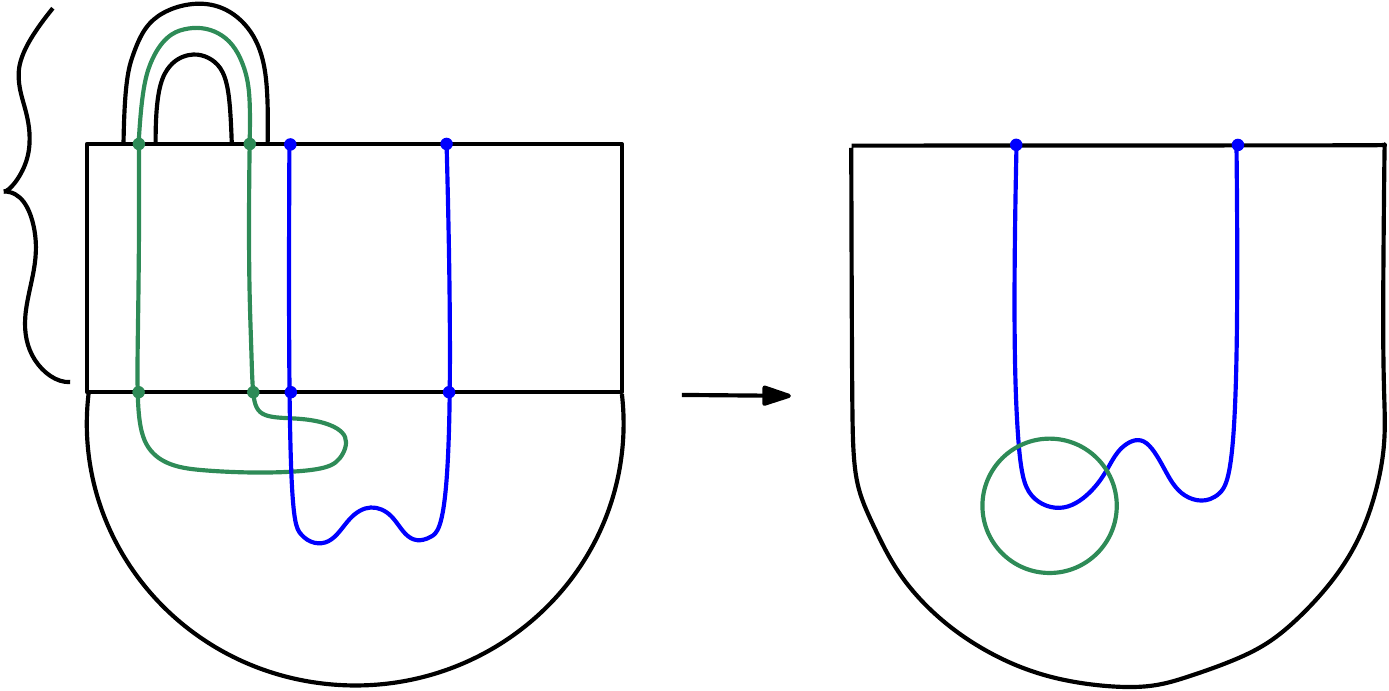}};
\node at (-0.25,4.1) {$W$};
\node at (2.6,1.65) {$+$};
\node at (2.6,2.35) {$-$};
\node at (8.25,2.15) {$+$};
\node at (9.4,1.85) {$-$};
\node at (1.8,2.75) {\green{$\gamma_1$}};
\node at (3,6) {$+1$-framed $2$-handle};
\node at (4,1.5) {\blue{$D_R$}};
\node[rotate=90] at (3.45,3.5) {\blue{$R\times [0,1]$}};
\node[rotate=90] at (4.85,3.5) {$S^3\times [0,1]$};
\node at (5.2,1) {$B^4$};
\node at (10.7,3.8) {\blue{$D_{K'}$}};
\node at (10.5,4.8) {\blue{$K'$}};
\node at (12.2,2.25) {$(\CP^2)^{\times}$};
\node at (6,2.7) {$\boldsymbol{\cong}$};
\end{tikzpicture}
\caption{Proof of~\cref{thm:beast} continued. Left: The ribbon disk $D_R$ for $R$ is glued on to $R\times [0,1]$ to produce a disk within $W\cup B^4$. Here $W$ is obtained from $S^3\times [0,1]$ by attaching $+1$-framed $2$-handles along $\gamma_i\times \{1\}\subseteq S^3\times \{1\}$, for each $i=1, \dots, m$. The case $m=1$ is pictured. Depicted in green is a $+1$-framed $2$-sphere obtained as the union of the core of the $2$-handle attached to $\gamma_1\times \{1\}$, the cylinder $\gamma_1\times [0,1]$, and a slice disk for $\gamma_1$ in $B^4$. Right: The slice disk $D_{K'}$ for $K'$ in $(\#^m \CP^2)^{\times}$ is shown. It is glued on to the concordance $C\subseteq S^3\times [0,1]$ from $K'$ to $K$ to produce a slice disk for $K$ in $(\#^m \CP^2)^{\times}$, The case $m=1$ is pictured, and the core $\CP^1\subseteq (\CP^2)^\times$ is shown in green. As indicated there is a diffeomorphism taking the picture on the left to the picture on the right.}\label{fig:beast1}
\end{figure} 

\begin{figure}[htb]
\centering
\begin{tikzpicture}
\node[anchor=south west,inner sep=0] at (0,0){\includegraphics[width=10.5cm]{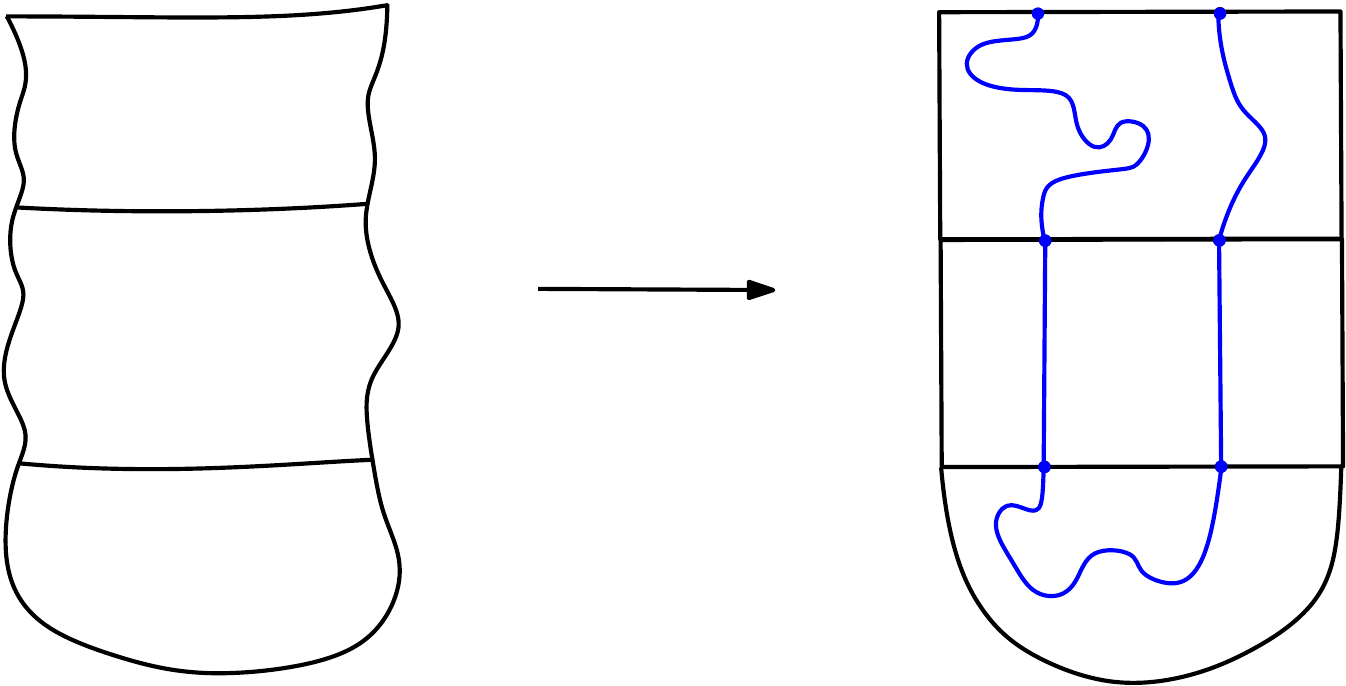}};
\node at (1.5,1) {$\Sigma_2(D_R)$};
\node at (1.5,2.65) {$\Sigma_2(W)$};
\node at (1.5,4.5) {$\Sigma_2(C)$};

\node at (-0.65, 1.7) {$\Sigma_2(R)$};
\node at (-0.65,3.75) {$\Sigma_2(K')$};
\node at (1.5,5.5) {$\Sigma_2(K)$};

\node at (9,0.35) {$B^4$};
\node at (10.25,2.5) {$W$};
\node[rotate=90] at (10.2,4.4) {$S^3\times [0,1]$};

\node at (11.2, 1.7) {\blue{$R\subseteq S^3$}};
\node at (11.25,3.5) {\blue{$K'\subseteq S^3$}};
\node at (9,5.5) {\blue{$K\subseteq S^3$}};

\node at (9.75,1) {\blue{$D_R$}};
\node[rotate=90] at (9.3,2.6) {\blue{$R\times [0,1]$}};
\node at (9.5,4.4) {\blue{$C$}};

\node at (1.5,-0.5) {$X$};
\node at (9,-0.5) {$(\#^m \CP^2)^\times$};
\end{tikzpicture}
\caption{Proof of~\cref{thm:beast} continued. The double branched cover $X$ of punctured $\#^m \CP^2$ along a slice disk for $K$.}
\label{fig:beastbig}
\end{figure}

Let $D_R$ be a ribbon disk for $R$ in $B^4$ and regard the link  $R\sqcup \gamma_1\sqcup \dots \sqcup \gamma_m$ as living in $S^3\times\{1\}\subseteq S^3\times [0, 1]$. Let $W$ be equal the union of $S^3 \times [0,1]$ and $m$ copies of the standard $2$-handle, attached along $\gamma_1, \dots, \gamma_m\subseteq S^3\times \{1\}$, each with framing $+1$. By construction, $W$ is diffeomorphic to twice-punctured $\#^m \CP^2$. 
Consider
 \[ D_R \cup_{R\times\{0\}} (R\times [0,1])\subseteq B^4 \cup_{\partial_- W} W \cong (\#^m \CP^2)\smallsetminus \mathring{B}^4=:(\#^m \CP^2)^{\times},\]
 and observe that the image of $R \times \{1\}$ in $\partial(\#^m \CP^2)^{\times}$ is the knot $K'$ in $S^3$. 
 In particular, the disk $$D_{K'}:=D_R \cup (R\times [0,1])$$
is a slice disk for $K'$ in $(\#^m \CP^2)^{\times}$. See \cref{fig:beast1}. 

Consider the double branched covers:
\begin{align*}
\Sigma_2(D_R):=&\Sigma_2(B^4, D_R),\\
\Sigma_2(W):=&\Sigma_2(W, R\times [0,1]),\\
\Sigma_2(D_{K'}):=&\Sigma_2((\#^m \CP^2)^{\times},D_{K'}) \cong \Sigma_2(W)\cup_{\Sigma_2(R)} \Sigma_2(D_R),\\
\Sigma_2(C):=&\Sigma_2(S^3 \times [0,1], C),
\end{align*}
where $C$ denotes the concordance between $K$ and $K'$ guaranteed by \cref{lem:slicetotwist} (see~\cref{fig:beastbig}). The diffeomorphism $\Sigma_2(D_{K'}) \cong \Sigma_2(W)\cup_{\Sigma_2(R)} \Sigma_2(D_R)$ is induced by the diffeomorphism $ B^4 \cup_{\partial_- W} W \cong (\#^m \CP^2)^{\times}$. Let $X$ be the result of gluing $\Sigma_2(D_{K'})$ to $\Sigma_2(C)$, along $\Sigma_2(K')$. Then, by construction, $X$ is obtained as a double cover of $(\#^m \CP^2)^{\times}$ branched along a null-homologous slice disk for $K$. In particular,  $\partial X=\Sigma_2(K)$. 

Apply the proof of~\cite{Cochran-Lickorish}*{Theorem~3.7} with $p_-=n_+=n_-=0$ and $p_+=m$
to see that $\Sigma_2(D_{K'})$ is a smooth, oriented $4$-manifold with positive definite intersection form and $b_2=2m$; this uses that $\sigma(K)=0$. 

Similarly, we know that $\Sigma_2(C)$ is a $\Z/2$-homology $S^3 \times [0,1]$. 
Then $X=\Sigma_2(D_{K'})\cup \Sigma_2(C)$ is a smooth, oriented $4$-manifold and the intersection form $Q_X\colon H_2(X) \times H_2(X) \to \Z$
 is positive definite. 
We need to show that $Q_X$ is of half-integer surgery type.  Since $X$ is obtained by gluing together $\Sigma_2(C)$, and manifolds diffeomorphic to $\Sigma_2(W)$ and $\Sigma_2(D_R)$, along rational homology spheres (namely, the double branched covers $\Sigma_2(K')$ and $\Sigma_2(R)$), the intersection form on $\Sigma_2(W)$ is an odd index sublattice of the intersection form on $X$. This uses that $\Sigma_2(D_R)$ is a $\Z/2$-homology $B^4$ (see, for example,~\cite{CG86}*{Lemma~2}).

We will next show that the intersection form on $\Sigma_2(W)$ is of half-integer surgery type. By \cref{prop:finite-enough}, this will suffice to show that the intersection form on $X$ is also of half-integer surgery type, as desired. 

\begin{claim}
$Q_{\Sigma_2(W)}$ is of half-integer surgery type.
\end{claim}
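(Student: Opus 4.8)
Here is the plan.

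The plan is to describe $\Sigma_2(W)=\Sigma_2(W,R\times[0,1])$ explicitly as a relative handlebody over the collar $\Sigma_2(R)\times[0,1]$, to exhibit inside $H_2(\Sigma_2(W))$ a full-rank sublattice of odd index whose Gram matrix is manifestly of (essentially) half-integer surgery type, and then to conclude with \cref{lem:mod-2-enough} and \cref{prop:finite-enough}. The main obstacle, which forces this roundabout route, is that the lifts to $\Sigma_2(R)$ of the curves $\gamma_i$ need not be null-homologous -- only torsion -- so $Q_{\Sigma_2(W)}$ is a genuinely non-unimodular lattice and one cannot simply read off a half-integer surgery basis from the handle decomposition.

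First I would set up the handle picture. Since each $\gamma_i$ is disjoint from $R$ and has $\lk(\gamma_i,R)=0$, it maps to $0$ under $H_1(S^3\setminus R)\to\Z/2$, hence lifts to a $2$-component link $\gamma_i^+\sqcup\gamma_i^-$ in $\Sigma_2(R)$; and because the branched covering is a local diffeomorphism near each $\gamma_i$, the $+1$-framed $2$-handle of $W$ attached along $\gamma_i\times\{1\}$ lifts to two $2$-handles attached along $\gamma_i^+\times\{1\}$ and $\gamma_i^-\times\{1\}$ in $\Sigma_2(R)\times\{1\}$, each carrying the framing inherited from the $+1$-framing of $\gamma_i$. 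So $\Sigma_2(W)$ is $\Sigma_2(R)\times[0,1]$ with these $2m$ two-handles attached; let $c_1^+,c_1^-,\dots,c_m^+,c_m^-$ be their cores, which generate $H_2(\Sigma_2(W),\partial_-\Sigma_2(W))\cong\Z^{2m}$ where $\partial_-\Sigma_2(W)=\Sigma_2(R)$. As $\Sigma_2(R)$ is a rational homology sphere with $|H_1(\Sigma_2(R))|=\det(R)$ odd, the long exact sequence of the pair identifies $H_2(\Sigma_2(W))$ with $\ker\bigl(\Z^{2m}\to H_1(\Sigma_2(R)),\ c_i^\pm\mapsto[\gamma_i^\pm]\bigr)$; thus $H_2(\Sigma_2(W))$ is free of rank $2m$, sits in $\Z^{2m}$ with index dividing $\det(R)$, and its intersection form -- the restriction to this kernel of the rational linking matrix of the framed link $\{\gamma_i^\pm\}$ in $\Sigma_2(R)$ -- is positive definite, since it embeds in $Q_X$.

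Next I would build the ``$2I_{m\times m}$'' block. Choose pairwise disjoint embedded disks $\Delta_1,\dots,\Delta_m\subseteq S^3$ with $\partial\Delta_i=\gamma_i$ (possible as the $\gamma_i$ form an unlink); then $\Delta_i\cap\gamma_j=\emptyset$ for $j\neq i$, while $\Delta_i$ meets $R$ in an even number of points. Let $\widetilde{\Delta}_i\subseteq\Sigma_2(R)\times\{1\}$ be the preimage of $\Delta_i$, an embedded surface with $\partial\widetilde{\Delta}_i=\gamma_i^+\sqcup\gamma_i^-$, and let $\Sigma_i$ be the closed surface obtained by capping $\widetilde{\Delta}_i$ off with the core disks of the handles along $\gamma_i^\pm$ (inserting the small annuli in the handle boundary tori that absorb the framing discrepancy). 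The image of $[\Sigma_i]$ in $H_2(\Sigma_2(W),\partial_-\Sigma_2(W))$ is $c_i^++c_i^-$, so $[\Sigma_i]\cdot[\Sigma_j]=0$ for $i\neq j$ (the $\Sigma_i$ can be made disjoint); and the framing defect of $\widetilde{\Delta}_i$ at each of $\gamma_i^\pm$ equals, via the local diffeomorphism, the difference between the $+1$-framing of $\gamma_i$ and the $0$-framing induced by the disk $\Delta_i$, namely $+1$, so $[\Sigma_i]\cdot[\Sigma_i]=1+1=2$. Moreover the covering involution $\iota$ of $\Sigma_2(W)$ is orientation-preserving, acts slicewise on the collar as the covering involution of $\Sigma_2(R)$, interchanges $c_i^+$ and $c_i^-$ (after fixing orientations), and preserves each $\Sigma_i$; hence $[\Sigma_i]\cdot c_i^+=[\Sigma_i]\cdot c_i^-=\tfrac12\bigl([\Sigma_i]\cdot[\Sigma_i]\bigr)=1$, while $[\Sigma_i]\cdot c_j^+=0$ for $j\neq i$.

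Finally I would complete $\{[\Sigma_i]\}$ to a full-rank sublattice. Let $n_i$ be the order of $[\gamma_i^+]$ in $H_1(\Sigma_2(R))$; it divides $\det(R)$, hence is odd, and $n_ic_i^+$ lies in $\ker(\Z^{2m}\to H_1(\Sigma_2(R)))=H_2(\Sigma_2(W))$, say $n_ic_i^+$ is the image of a class $T_i$. The sublattice $L:=\langle[\Sigma_1],\dots,[\Sigma_m],T_1,\dots,T_m\rangle$ has rank $2m$ -- its image in $\Z^{2m}$ is spanned by the $2m$ linearly independent vectors $c_i^++c_i^-$ and $n_ic_i^+$ ($1\le i\le m$) -- and index $\prod_i n_i$ in $\Z^{2m}$, hence odd index in $H_2(\Sigma_2(W))$. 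In the ordered basis $[\Sigma_1],\dots,[\Sigma_m],T_1,\dots,T_m$, and using $[\Sigma_i]\cdot T_j=n_j\bigl([\Sigma_i]\cdot c_j^+\bigr)=n_i\delta_{ij}$, the Gram matrix of $L$ is
\[
\left[\begin{array}{cc} 2I_{m\times m} & D\\ D^{T} & E\end{array}\right],\qquad D=\operatorname{diag}(n_1,\dots,n_m),
\]
with $E$ a symmetric integer matrix. Because each $n_i$ is odd, this matrix is congruent modulo $2$ to $\sbmatrix{2I_{m\times m} & I_{m\times m}\\ I_{m\times m} & E}$, so \cref{lem:mod-2-enough} furnishes $P\in\GL(2m,\Z)$ conjugating it to $\sbmatrix{2I_{m\times m} & I_{m\times m}\\ I_{m\times m} & A'}$; that is, $L$ is of half-integer surgery type. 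As $L$ is an odd-index sublattice of the positive definite lattice $Q_{\Sigma_2(W)}$, \cref{prop:finite-enough} then shows $Q_{\Sigma_2(W)}$ is of half-integer surgery type, proving the claim.
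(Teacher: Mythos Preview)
Your proof is correct and follows essentially the same strategy as the paper's: both arguments lift the $+1$-framed $2$-handles to pairs of handles in $\Sigma_2(W)$, use the lifted bounding disks $\widetilde\Delta_i$ (the paper's $F_i$) to produce classes of square $2$ corresponding to $c_i^++c_i^-$, complete to a full-rank sublattice using odd multiples of single lifted handles, and finish with \cref{lem:mod-2-enough} and \cref{prop:finite-enough}. The only cosmetic differences are that the paper phrases the square-$2$ classes via a handle slide and an explicit framing computation of the band-summed curve $\zeta_i$, uses the uniform multiple $k^2=\det(R)$ rather than the individual orders $n_i$, and computes the off-diagonal pairings by a direct linking-number count rather than your (slick) covering-involution symmetry argument.
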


\begin{proof}[Proof of claim]
Our goal is to replicate the argument of~\cite{Owens08}*{Lemma~3.1}; the difficulty in doing so is that, since $R$ is not the unknot, $\Sigma_2(R)$ is only known to be a $\Z/2$-homology sphere, rather than $S^3$. This requires extra care with respect to framings. Nonetheless, the argument from~\cite{Owens08} can be generalized to this setting by ensuring we only consider framings of null-homologous loops. 

The boundary of $\Sigma_2(D_R)$ is $\Sigma_2(R)$, in particular, a rational homology sphere. The second homology of $\Sigma_2(W)$, relative to $\Sigma_2(R)$, is generated by the lifts of the $2$-handles, so we investigate what happens to the curves to which they are attached. For the rest of the proof, choose an (arbitrary) orientation on each $\gamma_i$. Since the linking numbers of the curves $\gamma_1, \dots, \gamma_m$ with $R$ are even (in fact, zero), they lift to $2m$ curves in $\Sigma_2(R)$; we denote the two oriented lifts of $\gamma_i$ by $\widetilde{\gamma_i}$ and $\widetilde{\gamma_i}'$. Note that we have no way to distinguish between these two lifts.

In order to attach a $2$-handle along a neighborhood of a curve in a $3$-manifold, we need a parametrization of that neighborhood as $S^1 \times D^2$ or, equivalently, a choice of preferred $S^1 \times *$, where $* \in \partial D^2$.  Let $\alpha_i$ denote a push-off of $\gamma_i$ into the disk region $\Delta_i$ where the generalized positive crossing will be inserted, as shown in~\cref{fig:alpha}. Note that $\alpha_i$ is a push-off in the plane of the diagram and also the $0$-framed longitude of $\gamma_i$. The framing curve for $\gamma_i$, corresponding to the $+1$-framing, is isotopic in $\partial \nu(\gamma_i)$ to $\alpha_i+ \mu(\gamma_i)$, where $\nu(\gamma_i)$ is a small tubular neighbourhood of $\gamma_i$ and $\mu(\gamma_i)$ is a positive meridian. The lift of $\Delta_i$ to $\Sigma_2(R)$ is some compact surface $F_i$ with two boundary components. The Euler characteristic of $F_i$ depends on the \emph{geometric} intersection number of $\Delta_i$ with $R$: in the simplest case of an actual crossing change, it is an annulus. The surface $F_i$ inherits an orientation from $\Delta_i$, and the two (oriented) boundary components of $F_i$ are the two lifts of $\gamma_i$ from before, i.e.\ $\partial F_i=\wt{\gamma}_i\sqcup \wt{\gamma}'_i$, and thus $[\widetilde{\gamma_i}]=-[\widetilde{\gamma_i}']$ in $H_1(\Sigma_2(R);\Z)$. Note that the elements of $\{F_i\}$ are pairwise disjoint and embedded, since the generalized positive crossings are added in isolated balls, separated from one another. 

\begin{figure}[htb]
\begin{tikzpicture}
\node[anchor=south west,inner sep=0] at (0,0){\includegraphics[width=5cm]{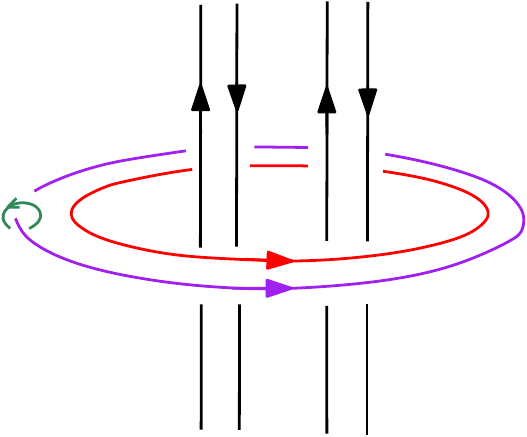}};
\node at (2.5,-0.25) {$R$};
\node at (5.2,2) {\pink{$\gamma_1$}};
\node at (4.2,2) {\red{$\alpha_1$}};
\node at (-0.36,1.8) {\green{$\mu(\gamma_1)$}};
\end{tikzpicture}
\caption{Proof of~\cref{thm:beast} continued. The curves $\gamma_1$, $\alpha_1$, and $\mu(\gamma_1)$ are shown with respect to a ribbon knot $R$. Compare with \cref{fig:schematic-beast-setup}.}
\label{fig:alpha}
\end{figure}

Let $\wt\alpha_i$ and $\wt\alpha_i'$ denote the two lifts of $\alpha_i$ in $\Sigma_2(R)$, corresponding to $\wt\gamma_i$ and $\wt\gamma_i'$ respectively. Note that $\wt\alpha_i$ and $\wt\alpha_i'$ both lie on $F_i$. The framing curve for $\wt\gamma_i$ (induced by the $+1$-framing on $\gamma_i$ in $S^3$)  is isotopic in $\partial \nu(\wt\gamma_i)$ to $\wt\alpha_i+ \mu(\wt\gamma_i)$, where, as before, $\nu(\gamma_i)$ is a small tubular neighbourhood of $\wt\gamma_i$ and $\mu(\wt\gamma_i)$ is a positive meridian. Denote this framing curve by $\eta_i$. Similarly, the framing curve for $\wt\gamma_i'$ is isotopic in $\partial \nu(\wt\gamma_i')$ to $\wt\alpha_i'+ \mu(\wt\gamma_i')$.

For $h_i$ a $2$-handle in the given decomposition of $W$, let $\wt h_i$ and $\wt h_i'$ denote the two lifts to $\Sigma_2(W)$, attached along $\wt\gamma_i$ and $\wt \gamma_i'$ respectively. For each $i$, we will slide $\wt h_i'$ over $\wt h_i$ and call the result $\wt h_i + \wt h_i'$. More precisely, $\wt h_i + \wt h_i'$ is attached along a band sum of $\wt \gamma_i'$ and $\eta_i$, denoted $\zeta_i$, where we require the band to lie on the surface $F_i$ (see Figure \ref{fig:eta}). Note that $\zeta_i$ is null-homologous in $\Sigma_2(R)$ since we know that $[\eta_i]=[\widetilde{\gamma_i}]=-[\widetilde{\gamma_i}']$ in $H_1(\Sigma_2(R);\Z)$. 

Recall that the linking number $\lk_Y(x,y)$ is a well-defined integer for oriented null-homologous loops $x,y$ in a $3$-manifold $Y$ and can be computed as the signed count of intersections between $y$ and any $2$-chain bounded by $x$. As a result, for a fixed such loop $x$, a framing of a tubular neighbourhood also corresponds to a well-defined integer. 
 Moreover, if a $2$-handle $h$ is attached to $Y \times [0,1]$ along $x$ with framing $n$ to form a $4$-manifold $Y(h)$, then $H_2(Y(h),Y\times \{0\};\Z)\cong \Z$ is generated by $h$.  Further, since $x$ is null-homologous in $Y$, the handle $h$ produces a class in $H_2(Y(h);\Z)$ and $Q_{Y(h)}(h,h)=n$. 

Returning to the proof, we have that the framing of $\zeta_i$ equals the value of $Q_{\Sigma_2(W)}(x_i,x_i)$, where $x_i$ is the class in $H_2(\Sigma_2(W);\Z)$ corresponding to the handle $\wt{h}_i+\wt h_i'$. We compute the framing of $\zeta_i$ next. 

Since $\zeta_i$ is obtained from the framed curves $\eta_i$ and $\wt\gamma_i'$ by a band sum along $F_i$, we find that the framing on $\zeta_i$ is $2$. This can be seen more clearly from~\cref{fig:eta}. Specifically, the framing cuve for $\zeta_i$ is obtained from a pushoff of $\zeta_i$ into $F_i$ by summing with $\mu(\wt\gamma_i)$ and $\mu(\wt\gamma_i')$. The framing of $\zeta_i$ is the linking number (in $\Sigma_2(R)$) between $\zeta_i$ and this framing curve. A null-homology for $\zeta_i$ is provided by $F_i$ minus the band used to produce $\zeta_i$ union a meridional disk bounded by $\mu(\wt\gamma_i)$. From the figure we see that there are two points of intersection between this null-homology and the framing curve for $\zeta_i$. Hence $Q_{\Sigma_2(W)}(x_i,x_i)=2$. 

We also see for $i \neq j$ that $\lk_{\Sigma_2(R)}(\zeta_i,\zeta_j)=0$ since the $\{\Delta_i\}$ (and therefore the $\{F_i\}$) are split from one another, and so $Q_{\Sigma_2(W)}(x_i,x_j)=0$ as well.

\begin{figure}[htb]
\begin{tikzpicture}
\node[anchor=south west,inner sep=0] at (0,0){\includegraphics[width=12cm]{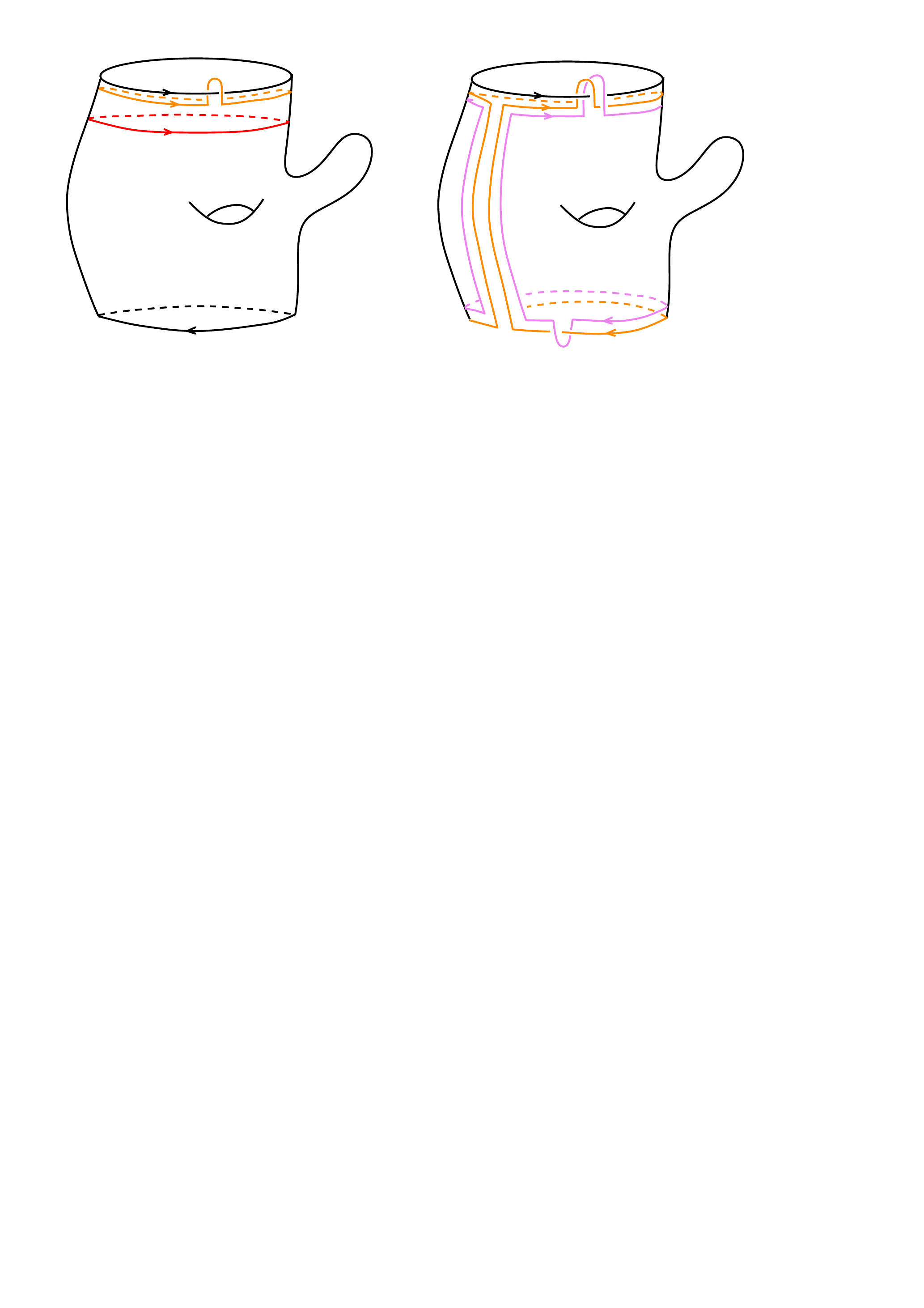}};
\node at (3.8,1.5) {$F_i$};
\node at (4.4,0.5) {$\wt\gamma_i'$};
\node at (4.25,5) {$\wt\gamma_i$};
\node at (4.25,4) {\red{$\wt\alpha_i$}};
\node at (4.25,4.5) {\orange{$\eta_i$}};
\node at (10.8,4.5) {\orange{$\zeta_i$}};
\node at (2.2,-0.5) {(a)};
\node at (9,-0.5) {(b)};
\end{tikzpicture}
\caption{Proof of~\cref{thm:beast} continued. (a) The surface $F_i$ with (oriented) boundary components $\wt\gamma_i$ and $\wt\gamma_i'$. A pushoff $\wt\alpha_i$ (red) of $\wt\gamma_i$ is shown, as well as the framing curve $\eta_i$ (orange) for $\wt\gamma_i$. (b) The curve $\zeta$ (orange) as well as its framing curve (purple) are shown. By construction $\zeta_i$ is the band sum of $\wt\gamma_i'$ and $\eta_i$ along a band lying on $F_i$. }
\label{fig:eta}
\end{figure}

We are still missing half of the desired basis for the intersection form on $\Sigma_2(W)$. The determinant of the ribbon knot $R$, i.e.\ the order of the group $H_1(\Sigma_2(R);\Z)$, is some odd square $k^2$. Then $k^2[\wt\gamma_i]$ is trivial in $H_1(\Sigma_2(R);\Z)$, and we obtain a class $y_i\in H_2(\Sigma_2(W);\Z)$ given by $k^2$ copies of the handle $\wt h_i$ attached along $\wt\gamma_i$, together with the null-homology for $k^2[\wt\gamma_i]$ in $\Sigma_2(R)$. 
 Note that the collection $\{x_i,y_i\}_{i}$ generates a full rank sublattice of $Q_{\Sigma_2(W)}$. Moreover, since $k^2$ is odd, it is an odd index sublattice, as $\{\wt h_i, \wt h_i+\wt h_i'\}$ generate $H_2(\Sigma_2(W),\Sigma_2(R);\Z)$. As before, the values $Q_{\Sigma_2(W)}(x_i, y_j)$ correspond to the linking numbers $\lk_{\Sigma_2(R)}(\zeta_i, k^2[\wt \gamma_j])$. By definition, this is the signed count of intersections between a null homology for $\zeta_i$ and a representative of $k^2[\wt \gamma_j]$. By construction of $\zeta_i$, a null homology consists of the surface $F_i$, minus the band used for the band sum of $\wt\gamma_i'$ and $\eta_i$, union a meridional disk for $\wt\gamma_i$. A representative for $k^2[\wt \gamma_j]$ is provided by the $(k^2,1)$-cable of $\wt\gamma_j$. Then we have that $\lk_{\Sigma_2(R)}(\zeta_i, k^2[\wt \gamma_j])=k^2\delta_{ij}$. Recalling that $k^2$ is odd, we see from~\cref{lem:mod-2-enough} that $Q_{\Sigma_2(W)}$ has an odd index sublattice of half-integer surgery type. By~\cref{prop:finite-enough}, we know that $Q_{\Sigma_2(W)}$ is of half-integer surgery type as well. 
\end{proof}

The proof of the above  claim completes the proof of ~\cref{thm:beast}. \end{proof}

By applying \cref{thm:beast} to the mirror image of a given knot, we obtain a bound for $\ucpbar$, as we now state.  

\setcounter{beastmate}{1}
\begin{beastmate}\label{cor:beastmate}
Let $K\subseteq S^3$ be a knot with $\sigma(K)=0$. Suppose  $K$ is $H$-slice in $\#^m\ol{\CP^2}$ for some $0\leq m< \infty$. Then $\Sigma_2(K)$, the double
cover of $S^3$ branched along $K$, bounds a compact, smooth, oriented $4$-manifold $X$ with $b_2(X) =2m$, whose intersection form is negative definite and of negative half-integer surgery type.
\end{beastmate}

\section{Alternating knots}\label{sec:alt}
The goal of this section is to prove~\cref{thm:finite-and-big}, as an application of~\cref{thm:beast}. As we wish to control both $\ucp$ and $\ucpbar$, we work with $2$-bridge knots, or rather their connected sums. In particular, as mentioned in~\cref{sec:branched-covers}, the double branched cover of $S^3$ branched along any $2$-bridge knot is a lens space, and every lens space is well-known to bound both a positive definite and a negative definite $4$-manifold. More generally, the double branched cover of $S^3$ along any alternating knot has both a positive definite and a negative definite filling, which may be obtained by branching over (pushed-in copies of) the black and white 
checkerboard surfaces 
for the knot
(see, e.g.~\cite{OS-DBC}*{Lemma~3.6}). In our case, we prefer to work with the explicit plumbed fillings for lens spaces and their connected sums described in~\cref{sec:branched-covers}.

We refer the reader to~\cite{lisca} for information on the canonical definite manifolds bounded by the double branched covers of general $2$-bridge knots.  
Here we focus on the family of twist knots $K_a$ indexed by $a \geq 1$, shown in~\cref{fig:2bridge-diag}. In the notation of~\cite{lisca}, the knot $K_a$ corresponds to 
\[\frac{4a+1}{2}= 2a+ \frac{1}{2}=[2a,2]^+.\] 
Casson-Gordon~\cite{CG86} showed that $K_a$ is not slice for $a \geq 3$.  Note that a single negative to positive crossing change transforms $K_a$ into the unknot, while $a-2$ positive to negative crossing changes transform $K_a$ into $K_2$, which is slice. Therefore we have that $\ucp(K_a) \leq a-2$ and $\ucpbar(K_a) \leq 1$ for all $a \geq 3$. As mentioned in~\cref{sec:introduction}, this implies that the Tristram-Levine signature function of $K_a$ is identically zero. Indeed, an infinite family of the knots $\{K_{a}\}$ for $a\geq 3$ are algebraically slice, as shown e.g.\ in~\cite{CG86}*{p.\ 182}.

\begin{figure}[htb]
\begin{tikzpicture}
\node[anchor=south west,inner sep=0] at (0,0){\includegraphics[width=6cm]{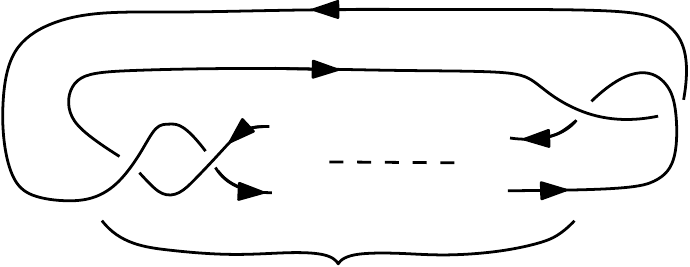}};
\node at (3.4,-0.25) {$2a$ positive crossings};
\end{tikzpicture}
\caption{The twist knot $K_a$. Note that $K_1$ is the figure eight knot and $K_2$ is the stevedore knot.}
\label{fig:2bridge-diag}
\end{figure}

We begin by proving the following result.

\begin{proposition}\label{prop:connectedsumsoftwist}
Let $n \geq 1$ and let $a_i \geq 3$ for all $i$.
Then $\ucpbar(\#_{i=1}^n K_{a_i})=n$. 
\end{proposition}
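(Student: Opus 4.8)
The plan is to establish the two inequalities $\ucpbar(\#_{i=1}^n K_{a_i})\leq n$ and $\ucpbar(\#_{i=1}^n K_{a_i})\geq n$ separately. For the upper bound, recall that a single negative-to-positive crossing change transforms each $K_{a_i}$ into the unknot; equivalently (see \cref{rem:negging}), $K_{a_i}$ is obtained from the unknot by a single generalized \emph{negative} crossing change, and the mirror $\ol{K_{a_i}}$ is obtained from the unknot by a single generalized \emph{positive} crossing change. Taking connected sums, $\#_{i=1}^n \ol{K_{a_i}}=\ol{\#_{i=1}^n K_{a_i}}$ is obtained from the unknot (a ribbon knot) by $n$ generalized positive crossing changes, performed in disjoint balls. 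Hence $\ucp(\ol{\#_{i=1}^n K_{a_i}})\leq n$, which gives $\ucpbar(\#_{i=1}^n K_{a_i})\leq n$ by the orientation-reversal identity $\ucp(\ol K)=\ucpbar(K)$.

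For the lower bound, I would argue by contradiction: suppose $\#_{i=1}^n K_{a_i}$ is $H$-slice in $\#^m\ol{\CP^2}$ with $m<n$. Since each $K_{a_i}$ has vanishing Tristram–Levine signature function (as noted just before the proposition), so does the connected sum, and in particular $\sigma(\#_{i=1}^n K_{a_i})=0$. Thus \cref{cor:beastmate} applies: the double branched cover $\Sigma_2(\#_{i=1}^n K_{a_i})=\#_{i=1}^n L(4a_i+1,2)$ bounds a compact smooth oriented $4$-manifold $X$ with $b_2(X)=2m$ whose intersection form is negative definite and of negative half-integer surgery type. On the other hand, the lens space $L(4a_i+1,2)$ bounds the canonical \emph{positive} definite plumbed $4$-manifold $P_i$ described in \cref{sec:branched-covers} (using the continued fraction expansion of $(4a_i+1)/(4a_i-1)$ into entries $\geq 2$ and reversing orientation, as in the recipe following \eqref{eq:negative-cont-frac}). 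Boundary-connect-summing these gives a positive definite filling $P:=\natural_{i=1}^n P_i$ of $\#_{i=1}^n L(4a_i+1,2)$ with $b_2(P)=\sum_i (\text{length of the expansion})$, which is $\geq n$.

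Now glue: $Z:=(-X)\cup_{\#_i L(4a_i+1,2)} P$ is a closed smooth oriented $4$-manifold with positive definite intersection form $Q_Z=(-Q_X)\oplus Q_P$ (the boundary being a rational homology sphere makes the Mayer–Vietoris splitting work rationally, and one must check it splits integrally up to finite index, which suffices for the Donaldson argument; $-Q_X$ is positive definite and of positive half-integer surgery type). By Donaldson's Theorem~A, $Q_Z$ is diagonalizable over $\Z$, i.e.\ isomorphic to $\langle 1\rangle^{2m+b_2(P)}$. The contradiction will come from the half-integer surgery structure on the $-Q_X$ summand: a positive half-integer surgery type lattice contains vectors $u_1,\dots,u_m$ with $Q(u_i,u_i)=2$ and $Q(u_i,u_j)=0$ for $i\neq j$, together with $v_i$ with $Q(u_i,v_i)=1$; I would use the embedding of this configuration into the standard diagonal lattice $\langle 1\rangle^N$, combined with the known structure of the embedding of $P_i$'s lattice into $\langle 1\rangle^N$ (a linear plumbing lattice embeds essentially uniquely, à la Lisca), to force $m\geq n$ — roughly, each norm-$2$ vector $u_i$ must "use up" a pair of basis vectors $e_k\pm e_\ell$ that is not already consumed by the rigid image of the plumbing lattice, and there are only enough room for at least $n$ such independent pairs to be missing, one per lens space summand.

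The main obstacle is precisely this last combinatorial-lattice step: extracting $m\geq n$ from the coexistence, inside $\langle 1\rangle^N$, of the rigid image of the positive definite plumbing lattices $\bigoplus_i Q_{P_i}$ and the half-integer surgery configuration on $-Q_X$. I expect to handle it by a careful bookkeeping of which standard basis vectors are hit by the (essentially unique) embeddings of each $L(4a_i+1,2)$-plumbing, showing that the orthogonal complement of the plumbing image has rank exactly $2m$ and must contain $n$ mutually orthogonal norm-$2$ vectors, which is impossible in a diagonal lattice unless $2m\geq n+\text{(something)}$, or more likely arguing directly that each summand $L(4a_i+1,2)$ contributes at least one to a lower bound of the form $m\geq \#\{\text{summands}\}=n$. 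This is the same mechanism Owens–Strle use for slicing numbers of connected sums, adapted here to the $\ol{\CP^2}$ setting; the twist-knot choice $a_i\geq 3$ guarantees the lens spaces are nontrivial (and non-slice) so that each genuinely obstructs.
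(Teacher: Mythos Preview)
Your high-level strategy matches the paper's: the upper bound via crossing changes is fine, and for the lower bound your setup (apply \cref{cor:beastmate}, build the positive definite linear plumbing $P_i$ with weights $2,2,\dots,2,3$ for each $L(4a_i+1,2)$, glue, invoke Donaldson) is exactly right and is what the paper does.

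The genuine gap is the final lattice step, which you correctly flag as ``the main obstacle'' but then describe with the wrong mechanism. You suggest the half-integer surgery vectors $u_1,\dots,u_m$ must each ``use up'' a pair of standard basis vectors not already consumed by the plumbing, and that counting these forces $m\geq n$. This is backwards: there are $m$ such vectors $u_i$, so this kind of pigeonhole would at best bound $m$ from \emph{above}, not below. The actual argument runs in the other direction. After the rigidity of the $(-2)$-chains pins down $\varphi(x_i^j)=e_i^j-e_i^{j+1}$ and $\varphi(u_\ell)=f_\ell-g_\ell$, the crucial objects are the weight-$3$ vertices $y_1,\dots,y_n$ (one per lens space summand). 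The constraints $\langle\varphi(y_j),\varphi(x_i^\ell)\rangle$, $\langle\varphi(y_j),\varphi(u_\ell)\rangle$, together with $\|\varphi(y_j)\|^2=3$, force
\[
\varphi(y_j)=e_j^{2a_j}+\sum_{\ell=1}^m d_j^\ell(f_\ell+g_\ell)\quad\text{with}\quad \sum_{\ell=1}^m (d_j^\ell)^2=1.
\]
Then $\langle\varphi(y_j),\varphi(y_{j'})\rangle=0$ for $j\neq j'$ says the vectors $(d_j^1,\dots,d_j^m)\in\Z^m$ are orthonormal, hence $n\leq m$. So it is the plumbing vertices $y_j$ spilling into the $(f,g)$-block, not the $u_i$ spilling into the plumbing block, that drives the bound.

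One further correction: the hypothesis $a_i\geq 3$ is not used merely to make the lens spaces ``nontrivial and non-slice.'' It enters numerically in the step above, to rule out unwanted terms in $\varphi(y_j)$: one needs $c^2(2a_i-1)+(c+1)^2>3$ for any $c\neq 0$, which fails when $a_i=2$ (take $c=-1$). This is consistent with $K_2$ being slice, so the proposition genuinely fails without $a_i\geq 3$.
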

\begin{proof}
Since $\ucpbar(K_a) \leq 1$ for all $a$, we have the desired upper bound, $\ucpbar(\#_{i=1}^n K_{a_i})\leq n$. 
Suppose that $K:= \#_{i=1}^n K_{a_i}$ is $H$-slice in $\#^m \overline{\CP^2}$. We will show that $m \geq n$. 

We begin by finding a smooth, compact, oriented, positive definite $4$-manifold $W$ such that $\partial W= \Sigma_2(K)$. For any $a$, the knot $K_a$ has double branched cover the lens space $L(4a+1, 2)=-L(4a+1, 4a-1)$.  A short inductive argument shows that $\frac{4a+1}{4a-1}$ has a continued fraction expansion of length $2a$ given by $[2, 2, \dots, 2, 3]^-$.\footnote{Put precisely, let $m_1:=2-\frac{1}{3}=\frac{5}{3}$ and for all $n>1$ recursively define $m_n=2-\frac{1}{m_{n-1}}$. Then $m_n=\frac{2n+3}{2n+1}$.} This implies that $L(4a+1, 4a-1)$ bounds the linear plumbing with framings $-2, -2, \dots, -2, -3$ and hence that $L(4a+1, 2)$ bounds the linear plumbing $W(a)$ with framings $2, 2, \dots, 2, 3$, which is well-known to be (and can be directly verified as) positive definite. Now let $W$ be the boundary connected sum of $\{W(a_1), W(a_2), \dots, W(a_n)\}$. For each $i$, denote the generators of $H_2(W(a_i);\Z)$ by $x_i^1, \dots, x_i^{2a_i-1}, y_i$, as illustrated in \cref{fig:linearplumb}, and oriented so that adjacency corresponds to intersection $-1$, e.g.\ $Q_{W}(x_i^j,x_i^{j+1})=-1$ for all $i$ and for all $j< 2a_i-1$. By construction $\partial W=\Sigma_2(K)$. 

\begin{figure}[htb]
\begin{tikzpicture}
\node[anchor=south west,inner sep=0] at (0,0){\includegraphics[width=5cm]{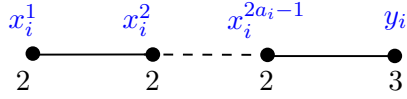}};
\node at (-0,0.6) {\blue{$x_i^1$}};
\node at (1.5,0.6) {\blue{$x_i^2$}};
\node at (3.2,0.6) {\blue{$x_i^{2a_i-1}$}};
\node at (4.9,0.6) {\blue{$y_i$}};
\node at (0,-0.2) {$2$};
\node at (1.7,-0.2) {$2$};
\node at (3.2,-0.2) {$2$};
\node at (4.9,-0.2) {$3$};
\end{tikzpicture}
\caption{A linear plumbing diagram for the 4-manifold $W(a_i)$. Adjacent generators have intersection $-1$. By construction, $\partial W(a_i)=L(4a_i+1,2)=\Sigma_2(K(a_i))$.}\label{fig:linearplumb}
\end{figure}

Since $K$ is $H$-slice in $\#^m \overline{\CP^2}$, we know that $-K$ is $H$-slice in $\#^m \CP^2$, so by \cref{thm:beast}, we know that $-\Sigma_2(K)=\Sigma_2(-K)=\partial X$ where $X$ is a smooth, compact, oriented $4$-manifold $X$ with $b_2(X)=2m$ and positive definite intersection form of half-integer surgery type. Let $u_1, v_1, u_2, v_2, \dots, u_m, v_m$ denote generators for $H_2(X;\Z)$ satisfying $u_i \cdot u_j=2 \delta_{i,j}$ and  $u_i \cdot v_j= \delta_{i,j}$ for all $1 \leq i, j \leq m$. 

Consider the union $W\cup X$. This is a smooth, closed $4$-manifold with positive definite intersection form, and therefore by Donaldson's Theorem~A~\cite{donaldson} the intersection form is standard, i.e.\ given by the identity matrix. Moreover, since $W$ and $X$ are glued along a rational homology sphere, we have a lattice embedding
\[ 
\varphi\colon \big(H_2(W) \oplus H_2(X), Q_W\oplus Q_X) \hookrightarrow (\Z^N, \Id),
\]
where $N:= 2m+\sum_{i=1}^n 2a_i$ and we know that $(H_2(W), Q_W)\cong \bigoplus_{i=1}^n(H_2(W(a_i)),Q_{W(a_i)})$. 
Denote the standard basis for $\Z^N$ by 
\begin{equation}\label{eq:basis}
	e_1^1, \dots, e_1^{2a_1}, e_2^1, \dots, e_2^{2a_2}, \dots, e_n^1, \dots, e_n^{2a_n}, f_1, g_1, \dots, f_m, g_m.
\end{equation}

\begin{step}\label{step:alternating-1}
Basis elements of square $2$. 
\end{step} 

Fix $i, j$ and let $\varphi(x_i^j) = \sum_{k=1}^N c_k \omega_k$ where $\{\omega_1, \dots, \omega_N\}=:\mathcal{B}$ is a relabeling of the standard basis in \eqref{eq:basis}. Because this basis is orthonormal, we have 
\[2= \langle \varphi(x_i^j), \varphi(x_i^j) \rangle = \sum_{k=1}^N c_k^2.\]
On the other hand, there is only one way to write 2 as a sum of squares of nonzero integers:

\begin{equation}\label{eq:2=2}
	2=1+1.
\end{equation}
Therefore, $\varphi(x_i^j)=\pm \omega_{r_i^j} \pm \omega'_{r_i^j}$ for some $\omega_{r_i^j} \neq \omega'_{r_i^j}\in \mathcal{B}$. In other words, up to sign, each $\varphi(x_i^j)$ is a sum of two standard basis elements. Next, we consider the overlap among the standard basis elements that form each image $\varphi(x_i^j)$ as we vary $i,j$. We have
\begin{align*}
 \langle \varphi(x_i^j), \varphi(x_k^{l}) \rangle  &=
  \begin{cases}
   2        & \text{if } i=k \text{ and }j=l \\
   1        & \text{if } i=k \text{ and }|j-l|=1 \\
   0        & \text{otherwise} 
  \end{cases}
\end{align*}
Consider the case $\langle\varphi(x_i^j),\varphi(x_k^l)\rangle=0$. If  $\varphi(x_i^j)= \omega_{r_i^j} + \omega'_{r_i^j}$, since $\varphi(x_k^l)$ lies in the orthogonal complement of $\varphi(x_i^j)$, it follows that either $\varphi(x_k^l)= \pm(\omega_{r_i^j} - \omega'_{r_i^j})$, or $\varphi(x_k^l)= \omega_{r_k^l} + \omega'_{r_k^l}$ with $\{\pm\omega_{r_i^j}, \pm\omega'_{r_i^j} \}\cap\{\pm\omega_{r_k^l}, \pm\omega'_{r_k^l}\}=\emptyset$. In other words, the plane spanned by $\{\omega_{r_i^j}, \omega'_{r_i^j} \}$, the two standard basis vectors in the image $\varphi(x_i^j)$, and the plane spanned by $\{\omega_{r_k^l}, \omega'_{r_k^l}\}$, the two basis vectors in the image $\varphi(x_k^l)$, are either the same or intersect only at the origin. Upon a moment's thought, we can exclude the first case, since, whenever $x_i^j\neq x_k^l$, there exists an element $x_s^t$ whose image under $\varphi$ is also a sum of two standard basis elements and such that $x_s^t$ is adjacent to one of $x_i^j$ or $x_k^l$ but not the other (this uses that $a_i\geq 3$). 
In sum, for $x_i^j\neq x_k^l$, the images $\varphi(x_i^j)$ and $\varphi(x_k^l)$ share a basis element, up to sign, if and only if $i=k$ and $|j-l|=1$. Putting this together, we may assume, up to changing the names and signs of the elements of $\mathcal{B}$, that 
\[\varphi(x_i^j)= e_i^j -e_i^{j+1} \text{ for } 1 \leq i \leq n \text{ and } 1 \leq j \leq 2a_i-1.\]
We carry out a similar analysis regarding the images $\varphi(u_r)$. We use the fact that each $u_r$ squares to 2, lies in the orthogonal complement of $\{x_i^j\}_{i,j}\cup \{u_s\}_{s\neq r}$, and satisfies $\langle \varphi(u_i), \varphi(v_j)\rangle=\delta_{ij}$. This allows us to conclude, again up to sign and reordering the basis elements, that 
\begin{equation}\label{eq:short-stick-adjacent}
\varphi(u_i)= f_i - g_i \text{ for } i=1, \dots, m. 
\end{equation}
For more details on the conclusion \eqref{eq:short-stick-adjacent}, specifically why there is no overlap among the generators in the images $\{\varphi(u_r)\}$, see \cref{cor:specialucp2}.

For all $i,j$, since $\varphi$ is a lattice embedding, we have that $\langle \varphi(y_i),\varphi(x_j^{k})\rangle$ is $1$ precisely if $i=j$ and $k=a_i-1$, and $0$ in every other case. Similarly, $\langle \varphi(y_i),\varphi(u_j)\rangle=0$ for all $i,j$. As a result, we have some representation  
\begin{equation}\label{eq:w_i}
	\varphi(y_i)= 
\sum_{j\neq i} \left( b_i^j \sum_{k=1}^{2a_j} e_j^k \right) +
c_i \sum_{k=1}^{2a_i-1} e_i^k + (c_i+1)e_i^{2a_i}+ 
 \sum_{l=1}^m d_i^l (f_l+ g_l)
\end{equation}
for some choice of $b_i^j$ for $j \neq i$, $c_i$, and $d_i^l$ for $1 \leq l \leq m$, and all $i$. 

\begin{step}
Basis elements of square $3$. 
\end{step}

Since $\langle \varphi(y_i) , \varphi(y_i)\rangle= 3$, we may refer to the representation \eqref{eq:w_i} to see that for each $i=1, \dots, n$
\[
\sum_{j\neq i} 2(b_i^j)^2 a_j + c_i^2(2a_i-1)+  (c_i+1)^2 + 2 \sum_{l=1}^m (d_i^l)^2=3
\]
Note that all terms on the left-hand side of this equation are non-negative. 
If $c_i\neq 0$, then 
$c_i^2(2a_i-1)>3$ since $a_i \geq 3$. Therefore, $c_i=0$ and our equation becomes
\[
\sum_{j\neq i} (b_i^j)^2 a_j  +  \sum_{l=1}^m (d_i^l)^2=1\]
By assumption, $a_j \geq 3$ for all $j$ and so $b_i^j=0$ for all $j \neq i$. As a consequence we have that 
\[\sum_{l=1}^m (d_i^l)^2=1.\] Returning now to~\eqref{eq:w_i} and simplifying, we derive that 
\begin{equation}\label{eq:nice-w}
\varphi(y_i)= e_i^{2a_i}+ \sum_{l=1}^m d_i^l (f_l+ g_l).
\end{equation}
Since $\langle \varphi(y_i), \varphi(y_j)\rangle=0$ for $i\neq j$, the representation \eqref{eq:nice-w} implies that 
\[
\sum_{l=1}^m d_l^i d_l^j=0 \text{ for all } i \neq j.
\]
In other words, $(d_1^1, \dots, d_1^m), (d_2^1, \dots, d_2^m), \dots, (d_n^1, \dots, d_n^m)$ is a list of $n$ orthonormal vectors in $\mathbb{Z}^m$. So $m \geq n$ as desired. 
\end{proof}

\begin{remark}
As previously noted, the knot $K_1$ is the figure eight knot, and the knot $K_2$ is the slice stevedore knot. Therefore, we can complete the statement of \cref{prop:connectedsumsoftwist} by the facts that, for $n\geq 1$, 
\begin{align*}
\ucpbar(\#_{i=1}^n K_{1})&=\begin{cases}
0 & \text{if }n\text{ even}\\
1 & \text{if }n\text{ odd}
\end{cases}
\\[2mm]
\ucpbar(\#_{i=1}^n K_{2})&=0.
\end{align*}
For the first statement above we used that $K_1\# K_1$ is slice and $\ucpbar(K_1)=1$ since it can be changed to the unknot by a single negative to positive crossing change.
\end{remark}

We now illustrate the use of the proposition above to find new examples of knots with finite $\ucp$ and $\ucpbar$. 
Knots which are both positively slice and negatively slice were called \emph{biprojectively $H$-slice} in~\cite{manolescu-piccirillo}.

\begin{example}\label{cor:specialucp2}
Let $K= K_3 \#K_5$. We will show that $\ucp(K)$ and $\ucpbar(K)$ are both finite and strictly greater than $1$. By \cref{prop:connectedsumsoftwist}, we have that $\ucpbar(K)=2$. We also know that \[\ucp(K) \leq \ucp(K_3)+ \ucp(K_5)\leq 1+3=4.\] 

It remains to show that $\ucp(K)>1$. As in the proof of \cref{prop:connectedsumsoftwist}, we know that $\Sigma_2(K_a)$ is the lens space $L(4a+1, 2)$ and $\frac{4a+1}{2}= (2a+1)-\frac{1}{2}= [2a+1, 2]^-$. Then we see that $\Sigma_2(K)$ is the boundary of the plumbed negative definite $4$-manifold $W$ depicted in \cref{fig:k3plusk5}, where we denote generators of $H_2(W;\Z)$ by $\{x_1,x_2,w_1,w_2\}$, as shown in the figure, and orient them so that adjacency corresponds to intersection $+1$, i.e.\ $Q_W(x_1,x_2)=Q_W(w_1,w_2)=1$.  

\begin{figure}[htb]
\begin{tikzpicture}
\node[anchor=south west,inner sep=0] at (0,0){\includegraphics[width=5cm]{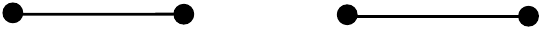}};
\node at (0,0.5) {\blue{$x_2$}};
\node at (1.6,0.5) {\blue{$x_1$}};
\node at (3.2,0.5) {\blue{$w_2$}};
\node at (4.8,0.5) {\blue{$w_1$}};
\node at (0,-0.2) {$-7$};
\node at (1.6,-0.2) {$-2$};
\node at (3.2,-0.2) {$-11$};
\node at (4.8,-0.2) {$-2$};
\end{tikzpicture}
\caption{A plumbing diagram for the 4-manifold $W$ with boundary $\partial W= L(13, 2)\# L(21, 2)= \Sigma_2(K_3 \# K_5)$. Adjacent generators have intersection $+1$.}\label{fig:k3plusk5}
\end{figure}

Suppose for the sake of a contradiction that $K_3 \#K_5$ is $H$-slice in $\CP^2$. Then by \cref{thm:beast} applied to $-K$, as in \cref{cor:beastmate}, there exists a smooth, compact, oriented $4$-manifold $X$ with boundary $-\Sigma_2(K)$ with $b_2(X)=2$ and negative definite intersection form of negative half-integer surgery type. Let $u,v$ denote the generators as in \cref{def:half-int}, with $u\cdot u=-2$ and $u\cdot v=1$. Consider the union $W\cup X$. This is a smooth, closed $4$-manifold with negative definite intersection form, and therefore by Donaldson's Theorem A~\cite{donaldson}, the intersection form is given by the negative of the identity matrix. So we have a lattice embedding 
\[
\varphi\colon \big(H_2(W) \oplus H_2(X), Q_W\oplus Q_X) \hookrightarrow (\Z^6, -\Id).
\]
Denote the standard generators of $\mathbb{Z}^6$ by $e_1, e_2, \dots, e_6$. Up to permuting the basis elements and replacing some $e_i$ with $-e_i$, we can assume that $\varphi(x_1)= e_1-e_2$, $\varphi(w_1)=e_3-e_4$, and $\varphi(u)= e_5-e_6$. To see this, assume for a contradiction that $\varphi(x_1)=e_1-e_2$ and $\varphi(w_1)=e_1+e_2$. Then we know that $\langle \varphi(x_1),\varphi(x_2)\rangle=1$ while $\langle \varphi(w_1),\varphi(x_2)\rangle=0$. Thus, subtracting the second equation from the first we have $1=-2\langle e_2, \varphi(x_2)\rangle$, which is a contradiction. The remaining cases can be similarly ruled out. 

We now know that for some integers $a,b,c,p,q,r$ we have  
\begin{align*}
\varphi(x_2)&= ae_1+(a+1)e_2+be_3+be_4+ce_5+ce_6, \\
\varphi(w_2)&= pe_1+pe_2 + q e_3+ (q+1)e_4+re_5+re_6.
\end{align*}
Since we must have $\langle \varphi(x_2), \varphi(x_2) \rangle= x_2 \cdot x_2=-7$, $\langle \varphi(w_2), \varphi(w_2) \rangle= w_2 \cdot w_2=-11$, and $\langle \varphi(x_2), \varphi(w_2) \rangle= x_2 \cdot w_2=0$, we obtain the following equations:
\begin{align*}
a^2+(a+1)^2+ 2b^2+2c^2&=7,\\
2p^2+q^2+(q+1)^2+2r^2&=11,\\
(2a+1)p+b(2q+1)+2cr&=0.
\end{align*}

For an integer $n$, the the values of $n^2+(n+1)^2$ are in the set $\{1, 5, 13, \dots\}$. By considering the first equation, we can conclude that $a^2+ (a+1)^2= 5$  and $b^2+c^2=1$. By considering the second equation, we can conclude that $q^2+(q+1)^2=1$ and $p^2+r^2=5$.
We therefore have $a \in \{-2, 1\}$, $\{b,c\}=\{0, \pm 1\}$ and $bc=0$, $q \in \{-1, 0\}$, and $\{p,r\}= \{\pm 1, \pm 2\}$. It follows that $(2a+1)p$ is a multiple of 3, exactly one of $b(2q+1)$ and $2cr$ is nonzero, and when non-zero, neither of $b(2q+1)$ and $2cr$ can be divisible by 3. So $(2a+1)p+b(2q+1)+2cr$ is not divisible by 3 and cannot be equal to zero.  This is our desired contradiction. 
\end{example}

Next we prove the second half of \cref{thm:finite-and-big}, i.e.\ we show that there are knots for which both $\ucp$ and $\ucpbar$ are arbitrarily large but finite. The following result gives two bounds on $\ucp(K)$ for a knot $K$, but as we will see in \cref{cor:bigucp2ucp2bar}, in practice we may apply one to $-K$ to obtain a bound on $\ucpbar(K)$. 

\begin{proposition}\label{prop:two-bounds}
Let $K= (\#_{i=1}^n K_{a_i}) \# (\#_{j=1}^m (-K_{b_j}))$, where $a_i, b_j \geq 3$ for all $i=1, \dots, n$ and $j=1, \dots, m$. Then $\ucp(K) \geq m-n$. 

If $b_j>a_i$ for all $i,j$, then $\ucp(K)\geq m$. 
\end{proposition}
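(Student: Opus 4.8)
The plan is to run, almost verbatim, the argument proving \cref{prop:connectedsumsoftwist}, with the positive and negative definite fillings interchanged and with one extra family of plumbing generators in play. Write $K = (\#_{i=1}^n K_{a_i}) \# (\#_{j=1}^m(-K_{b_j}))$. The first step is to record that $\sigma(K)=0$: for $a\geq 3$ the twist knot $K_a$ satisfies $\ucp(K_a)\leq a-2<\infty$ and $\ucpbar(K_a)\leq 1<\infty$, so applying \cref{prop:CNbound} to both $K_a$ and $\ol{K_a}$ shows its Levine--Tristram signature function vanishes identically, and hence so does that of $K$ (signatures being additive, and sign-reversed under mirroring, under connected sum). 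Now suppose $K$ is $H$-slice in $\#^M\CP^2$ for some finite $M$; we must show $M\geq m-n$, and $M\geq m$ when $b_j>a_i$ for all $i,j$. By \cref{thm:beast} there is a smooth, compact, oriented, positive definite $4$-manifold $X$ with $\partial X=\Sigma_2(K)$, $b_2(X)=2M$, whose intersection form is of half-integer surgery type; fix the distinguished basis $u_1,\dots,u_M,v_1,\dots,v_M$ of $H_2(X;\Z)$ as in \cref{def:half-int}, so $u_r\cdot u_s=2\delta_{rs}$ and $u_r\cdot v_s=\delta_{rs}$.

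The second step is to build a positive definite filling $W$ of $-\Sigma_2(K)$ and glue. As recalled in \cref{ex:2-bridge-lens}, $\Sigma_2(K)=\big(\#_i L(4a_i+1,2)\big)\#\big(\#_j(-L(4b_j+1,2))\big)$, so $-\Sigma_2(K)=\big(\#_i(-L(4a_i+1,2))\big)\#\big(\#_j L(4b_j+1,2)\big)$. Exactly as in the proof of \cref{prop:connectedsumsoftwist}, the lens space $L(4b_j+1,2)$ bounds the positive definite linear plumbing $W(b_j)$ with framings $2,2,\dots,2,3$ of length $2b_j$; and $-L(4a_i+1,2)$ bounds the positive definite two-bead plumbing $P_i$ with framings $2a_i+1$ and $2$, obtained by reversing the orientation of the negative definite filling of $L(4a_i+1,2)$ coming from $\tfrac{4a_i+1}{2}=[2a_i+1,2]^-$ as in \cref{cor:specialucp2}. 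Let $W$ be the boundary connected sum of all the $P_i$ and all the $W(b_j)$, so $W$ is positive definite with $\partial W=-\Sigma_2(K)$ and $W\cup X$ is closed and positive definite. By Donaldson's Theorem~A~\cite{donaldson} its intersection form is diagonal, and since $W$ and $X$ are glued along a rational homology sphere we get a lattice embedding
\[\varphi\colon\big(H_2(W)\oplus H_2(X),\,Q_W\oplus Q_X\big)\hookrightarrow(\Z^N,\Id),\qquad N=2n+\textstyle\sum_j 2b_j+2M,\]
and we denote by $x_j^1,\dots,x_j^{2b_j-1},y_j$ the plumbing generators of $W(b_j)$ (of squares $2,\dots,2,3$) and by $z_i,t_i$ those of $P_i$ (of squares $2a_i+1$ and $2$).

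The third step is the square analysis, which is a transcription of \cref{prop:connectedsumsoftwist}. The treatment there of generators of square $2$ applies verbatim to $\{x_j^k\}$, $\{u_r\}$ and $\{t_i\}$: each maps under $\varphi$ to a difference of two distinct standard basis vectors, consecutive $x_j^k$ share one such vector while no other pair among these generators does (the hypotheses $b_j\geq 3$ disambiguate each $x$-chain, and the generators $z_i$ and $v_r$, which pair nontrivially with $t_i$ and $u_r$ respectively, disambiguate the $t_i$ and the $u_r$). So after relabeling the basis of $\Z^N$ we may take $\varphi(x_j^k)=e_j^k-e_j^{k+1}$, $\varphi(u_r)=f_r-g_r$, $\varphi(t_i)=p_i-q_i$, and a rank count forces $\{e_j^k\}\sqcup\{f_r,g_r\}\sqcup\{p_i,q_i\}$ to be the whole standard basis. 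Then, exactly as in the displayed computation in \cref{prop:connectedsumsoftwist} --- using $\langle\varphi(y_j),\varphi(y_j)\rangle=3$, orthogonality of $\varphi(y_j)$ to every $\varphi(x_{j'}^k)$, $\varphi(u_r)$ and $\varphi(t_i)$, and $b_j\geq 3$ --- one obtains
\[\varphi(y_j)=\pm e_j^{2b_j}+\sum_{r=1}^M d_j^r(f_r+g_r)+\sum_{i=1}^n\beta_j^i(p_i+q_i),\qquad \sum_{r}(d_j^r)^2+\sum_{i}(\beta_j^i)^2=1,\]
so exactly one of the integers $d_j^r,\beta_j^i$ equals $\pm1$ and all the others vanish. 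Since $\langle\varphi(y_j),\varphi(y_{j'})\rangle=0$ for $j\neq j'$, the $m$ vectors $(d_j^1,\dots,d_j^M,\beta_j^1,\dots,\beta_j^n)\in\Z^{M+n}$ are orthonormal, so $m\leq M+n$; this is the first inequality.

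The fourth and final step --- the only genuinely new ingredient --- adds one more argument using the generators $z_i$, and I expect it to be the part requiring the most care. Assume $b_j>a_i$ for all $i,j$, and suppose for contradiction that for some $j$ the single nonzero coordinate above is a $\beta_j^i=\pm1$, so $\varphi(y_j)=\pm e_j^{2b_j}\pm(p_i+q_i)$. Expanding $\varphi(z_i)$ in the standard basis, orthogonality of $z_i$ to every $x_{j'}^k$, to every $u_r$, to every $t_{i'}$ with $i'\neq i$, and to every $y_{j'}$, together with $\langle\varphi(z_i),\varphi(t_i)\rangle=\pm1$, forces $\varphi(z_i)$ to have a single constant coordinate $\gamma$ along $e_j^1,\dots,e_j^{2b_j}$; and $\langle\varphi(z_i),\varphi(y_j)\rangle=0$ then forces $\gamma$ to be odd, in particular nonzero. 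But then
\[2a_i+1=z_i\cdot z_i=\langle\varphi(z_i),\varphi(z_i)\rangle\geq\gamma^2\cdot 2b_j\geq 2b_j\geq 2a_i+2,\]
a contradiction. Hence for every $j$ the nonzero coordinate of $\varphi(y_j)$ lies among the $d_j^r$, so $\varphi(y_j)=\pm e_j^{2b_j}+\sum_r d_j^r(f_r+g_r)$ with $\sum_r(d_j^r)^2=1$, and the $m$ vectors $(d_j^1,\dots,d_j^M)\in\Z^M$ are orthonormal, giving $m\leq M$. The delicate point, and where the hypothesis $b_j>a_i$ is consumed, is precisely this numerical comparison: one has to track the expansion of $\varphi(z_i)$ carefully enough to see that its lone ``large'' constraint $z_i\cdot z_i=2a_i+1$ is simply too small to absorb a nonzero coordinate spread over a $W(b_j)$-chain of length $2b_j$; everything else is a faithful copy of the proof of \cref{prop:connectedsumsoftwist}, with $z_i$ rather than a long plumbing chain supplying the disambiguation of the square-$2$ generator $t_i$.
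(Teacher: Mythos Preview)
Your proposal is correct and follows essentially the same strategy as the paper. The paper applies \cref{cor:beastmate} to $-K$ and works with negative definite fillings throughout, while you apply \cref{thm:beast} to $K$ directly and work positive definite; this is a trivial change of sign. The only substantive variation is in the final contradiction for the second inequality: the paper first uses $b_r>a_i$ to kill all the long-chain coefficients in $\varphi(z_i)$ and then computes $\langle\varphi(z_i),\varphi(y_j)\rangle=\pm(2\alpha_i^i+1)$, obtaining an odd-equals-zero parity contradiction, whereas you instead use $\langle\varphi(z_i),\varphi(y_j)\rangle=0$ to force the constant $e_j$-coefficient $\gamma$ of $\varphi(z_i)$ to be odd and then derive $2a_i+1\geq 2b_j\geq 2a_i+2$. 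Both arguments consume the hypothesis $b_j>a_i$ at the same essential point and are equally valid.
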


\begin{proof}
As in the proof of \cref{prop:connectedsumsoftwist}, the double branched cover $\Sigma_2(K_{a_i})$ is the lens space $L(4a_i+1,2)$, which bounds the negative definite plumbed manifold $U(a_i)$ shown in \cref{fig:short-stick}, with the generators of $H_2(U(a_i);\Z)$ denoted by $\{z_i,w_i\}$ as given in the figure, and oriented so that $Q_{U(a_i)}(z_i,w_i)=1$. We also saw that $\Sigma_2(-K_{b_j})=-\Sigma_2(K_{b_j})$ is the lens space $L(4b_j+1, 4b_j-1)$, which bounds the negative definite plumbed $4$-manifold $V(b_j)$ shown in \cref{fig:long-stick}; for $j= 1\dots m$, the generators $\{x_j^1, \dots, x_j^{2b_j-1}, y_j\}$ have squares as given in the figure and are oriented so that adjacency corresponds to intersection $+1$. Let $W$ be the boundary connected sum of \[\{U(a_1),U(a_2),\dots,U(a_n),V(b_1),V(b_2),\dots, V(b_m)\}.\]Observe that $b_2(W)=2n+ \sum_{j=1}^m 2b_j$. 

\begin{figure}[htb]
\begin{subfigure}[b]{0.49\textwidth}
\centering
\begin{tikzpicture}
\node[anchor=south west,inner sep=0] at (0,0){\includegraphics[width=2.3cm]{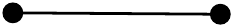}};
\node at (0.2,0.6) {\blue{$z_i$}};
\node at (2.2,0.6) {\blue{$w_i$}};
\node at (0.2,-0.25) {$-2a_i-1$};
\node at (2.2,-0.25) {$-2$};
\end{tikzpicture}
\caption{}\label{fig:short-stick}
\end{subfigure}\hfill 
\begin{subfigure}[b]{0.49\textwidth}
\centering
\begin{tikzpicture}
\node[anchor=south west,inner sep=0] at (0,0){\includegraphics[width=5cm]{Figure8alternative.pdf}};
\node at (-0,0.6) {\blue{$x_j^1$}};
\node at (1.5,0.6) {\blue{$x_j^2$}};
\node at (3.2,0.6) {\blue{$x_j^{2b_j-1}$}};
\node at (4.9,0.6) {\blue{$y_j$}};
\node at (0,-0.25) {$-2$};
\node at (1.7,-0.25) {$-2$};
\node at (3.2,-0.25) {$-2$};
\node at (4.9,-0.25) {$-3$};
\end{tikzpicture}
\caption{}\label{fig:long-stick}
\end{subfigure}
\caption{(a) A plumbing diagram for the manifold $U(a_i)$. (b) A plumbing diagram for the manifold $V(b_j)$. Adjacent generators in either diagram have intersection $+1$. }
\end{figure}

Now suppose that $K$ is $H$-slice in $\#^p \CP^2$. Then $-K$ is $H$-slice in $\#^p \ol{\CP^2}$ and so by \cref{cor:beastmate}, we know that $\Sigma_2(-K)=-\Sigma_2(K)=\partial X$ where $X$ is a smooth, compact, oriented $4$-manifold $X$ with $b_2(X)=2p$ and negative definite intersection form of negative half-integer surgery type. Let $u_1, v_1, u_2, v_2, \dots, u_p, v_p$ denote generators for $H_2(X;\Z)$ satisfying $u_i \cdot u_j=-2 \delta_{i,j}$ and  $u_i \cdot v_j= \delta_{i,j}$ for all $1 \leq i, j \leq p$. 

As before, we consider the union $W\cup X$. This is a smooth, closed $4$-manifold with negative definite intersection form, and therefore by Donaldson's Theorem~A~\cite{donaldson} the intersection form is standard, i.e.\ given by the negative of the identity matrix. Moreover, since $W$ and $X$ are glued along a rational homology sphere, we have a lattice embedding
\[ 
\varphi\colon \big(H_2(W) \oplus H_2(X), Q_W\oplus Q_X) \hookrightarrow (\Z^N, -\Id),
\]
where $N:= 2p+2n+ \sum_{j=1}^m 2b_j$ and we know that \[(H_2(W), Q_W)\cong \bigoplus_{i=1}^n(H_2(U(a_i)),Q_{U(a_i)}) \oplus \bigoplus_{i=1}^m(H_2(V(b_j)),Q_{V(b_j)}).\] 
We somewhat suggestively denote a standard basis for $\Z^N$ by 
\[ \bigcup_{i=1}^n \{e_i, f_i\} \cup \bigcup_{j=1}^m \{g_j^1, \dots, g_j^{2b_j}\} \cup \bigcup_{k=1}^p \{h_k, l_k\}.\]

\setcounter{step}{0}
\begin{step}
Basis elements of square $-2$.
\end{step}

We proceed as in Step~\ref{step:alternating-1} of the proof of~\cref{prop:connectedsumsoftwist}, using \eqref{eq:2=2} with each side multiplied by $-1$. Since each $w_i$, $x_j^\ell$, and $u_k$ has square $-2$, and by considering their relative intersections, we may assume without loss of generality (see e.g.\ \cref{cor:specialucp2}) that 
\begin{alignat*}
\varphi(w_i)&=e_i- f_i &&\text{ for } i=1, \dots, n, \\
\varphi(x_j^{\ell})&= g_j^{\ell}-g_j^{\ell+1} &&\text{ for } j=1, \dots, m \text{  and }\ell=1, \dots, 2b_j-1, \\
\varphi(u_k)&=h_k-l_k &&\text{ for } k=1, \dots p.
\end{alignat*}

For each $i=1, \dots, n$, by considering the intersection of $z_i$ with the $x_*$, $v_*^*$, and $y_*$ we see that there exist integers $\alpha_i^c$ for $c=1, \dots, n$, $\beta_i^j$ for $j=1, \dots, m$, and $\gamma_i^k$ for $k=1, \dots, p$ such that
\begin{align}\label{eqn:phiwiorig}
 \varphi(z_i)=
\sum_{q=1}^n \alpha_i^q (e_q+f_q) + f_i  + \sum_{r=1}^m\left( \beta_i^r \sum_{\ell=1}^{2b_r} g_r^ \ell \right) + \sum_{k=1}^p \gamma_i^k (h_k+l_k).\end{align}
Similarly, for each $j=1, \dots, m$ by considering the intersection of $y_j$ with the $x_*$, $v_*^*$, and $y_*$, we can see that there exist integers $\delta_j^s$ for $s=1, \dots, n$, $\epsilon_j^d$ for $d=1, \dots, m$, and $\theta_j^k$ for $k=1, \dots, p$ such that
\begin{align}\label{eqn:phiujorig}
\varphi(u_j)= \sum_{s=1}^n \delta_j^s(e_s+f_s)+ \sum_{t=1}^m\left( \epsilon_j^t \sum_{\ell=1}^{2b_t} g_t^{\ell} \right) + g_j^{2b_j} + \sum_{k=1}^p \theta_j^k (h_k + l_k).
\end{align}

\begin{step}
Basis elements of square $-3$
\end{step}

Observe that for each $j=1, \dots, m$ we have that 
\begin{align}\label{eqn:ujdotuj}
3=- \langle\varphi(y_j),\varphi(y_j)\rangle =\sum_{s=1}^n 2(\delta_j^s)^2 + \sum_{t=1}^m 2b_t (\epsilon_j^t)^2 + (2\epsilon_j^j+1) + \sum_{k=1}^p 2(\theta_j^k)^2.
\end{align}

Since each $b_t \geq 3$, we must have that $\epsilon_j^t=0$ for all $t=1, \dots m$. (This takes a little more thought for $t=j$, where the contribution is $2b_j (\epsilon_j^j)^2+ 2 \epsilon_j^j+1= (2b_j-1) (\epsilon_j^j)^2+ (\epsilon_j^j+1)^2$.) 

The equation \eqref{eqn:ujdotuj} therefore simplifies to yield
\begin{align*}
1= \sum_{s=1}^n (\delta_j^s)^2 + \sum_{k=1}^p (\theta_j^k)^2 \text{ for all } j=1, \dots, n. 
\end{align*}
We conclude that for each $j$ the set $\{\delta_j^s\}_{s=1}^n \cup \{\theta_j^k\}_{k=1}^p$ has exactly one nonzero entry, which equals~$\pm 1$.  It follows that fixing $j$ and letting $j', s, k$ vary, at most one of the products  $\{ \delta_j^s \delta_{j'}^s \, , \, \theta_j^k \theta_{j'}^k\}$ can be nonzero.
We can also observe that for all $j \neq j'$ we have
\begin{align*}
0=\frac{1}{2}\langle \varphi(y_j), \varphi(y_{j'})\rangle= \sum_{s=1}^n \delta_j^s \delta_{j'}^s + \sum_{k=1}^p \theta_j^k \theta_{j'}^k.
\end{align*}
 Therefore, for all $j' \neq j$, if $\delta_j^s \neq 0$, then $\delta_{j'}^s=0$ and similarly, if $\theta_j^k \neq 0$, then $\theta_{j'}^k=0$. The pigeonhole principle therefore allows us to conclude that $m \leq n+p$; that is, we have established our first claim by showing $p \leq m-n$.\\

For the proof of the second half of the statement of the proposition, assume that  $b_j>a_i$ for all $i,j$. We will now show that $m \leq p$. By another application of the pigeonhole principle note that we will be done if we can prove that $\delta_j^s=0$ for all $j=1, \dots m$ and $s=1, \dots, n$.  
So suppose for a contradiction that for some $1 \leq j \leq m$ and $1 \leq i \leq n$ we have that $\delta_{j}^{i} \neq 0$. Returning to \eqref{eqn:phiujorig} and recalling our observation that $\epsilon_j^t=0$ for all $t$, we therefore have that 
\begin{align}\label{eqn:phiujfinal}
\varphi(u_j)= \pm(e_i+f_i) +g_j^{2b_j}.\end{align}

\begin{step}
Considering $\varphi(z_i)$.
\end{step}

By \eqref{eqn:phiwiorig} we have that
\[2a_i+1= -\langle \varphi(z_i), \varphi(z_i)\rangle
=\sum_{q=1}^n 2(\alpha_i^q)^2+ (2 \alpha_i^i+1) + \sum_{r=1}^m 2b_r(\beta_i^r)^2 + \sum_{k=1}^p 2(\gamma_i^k)^2.
\]
Since $b_r>a_i$ for all $r=1, \dots, n$ we have that $2b_r>2a_i+1$ and hence that each $\beta_i^r=0$, and further that 
\begin{align}\label{eqn:phiwi}
\varphi(z_i)= \sum_{r=1}^n \alpha_i^r (e_r+f_r) + f_i + \sum_{k=1}^p \gamma_i^k (h_k+l_k).
\end{align}
By \eqref{eqn:phiujfinal} and~\eqref{eqn:phiwi}, we therefore have that
\begin{align*}
0=\langle \varphi(w_i), \varphi(u_j)\rangle
&=\left\langle\left(\sum_{r=1}^n \alpha_{i}^r (e_r+f_r) + f_j + \sum_{k=1}^p \gamma_{i}^k (h_k+l_k)\right), \left(\pm(e_i+f_i)+ g_j^{2b_j}\right)\right\rangle\\
&= \left\langle(\alpha_{i}^{i} e_{i} + (\alpha_{i}^i +1) f_i), \pm(e_i+f_i)\right\rangle\\
&= \mp (2 \alpha_i^i + 1).
\end{align*}
Since $\alpha_i^i \in \Z$,  we obtain our desired contradiction. 
\end{proof}

As promised, the above result allows us to bound $\ucp$ and $\ucpbar$ simultaneously. 

\begin{corollary}\label{cor:bigucp2ucp2bar}
Let $K= (\#_{i=1}^n K_{a_i}) \# (\#_{j=1}^m (-K_{b_j}))$, where $b_j>a_i\geq 3$ 
for all $i$ and $j$.
Then $\ucp(K) \geq m$ and $\ucpbar(K) \geq n-m$.
\end{corollary}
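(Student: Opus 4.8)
The plan is to derive both inequalities as immediate formal consequences of \cref{prop:two-bounds}, combined with the orientation symmetries of the $\CP^2$-slicing numbers recorded in \cref{sec:background}. The bound $\ucp(K)\geq m$ requires nothing beyond citing the second statement of \cref{prop:two-bounds}: our standing hypothesis $b_j>a_i\geq 3$ for all $i,j$ is exactly the hypothesis needed there.

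For the bound $\ucpbar(K)\geq n-m$, I would first recall that $\ucpbar(K)=\ucp(\ol K)=\ucp(-K)$, using that $\ucp(\ol K)=\ucpbar(K)$ and that all flavors of $\CP^2$-slicing numbers of $\ol K$ and $-K$ agree. Since connected sum commutes with taking concordance inverses and $-(-K_{b_j})=K_{b_j}$, the concordance inverse rewrites as
\[
-K=\Bigl(\#_{j=1}^m K_{b_j}\Bigr)\#\Bigl(\#_{i=1}^n(-K_{a_i})\Bigr),
\]
which is again of the shape treated in \cref{prop:two-bounds}, but with the two families exchanged: the ``positive'' twist-knot summands are now $K_{b_1},\dots,K_{b_m}$ (there are $m$ of them) and the ``negative'' ones are $-K_{a_1},\dots,-K_{a_n}$ (there are $n$ of them), all parameters being $\geq 3$. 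Applying the \emph{first} statement of \cref{prop:two-bounds} to $-K$ then yields $\ucp(-K)\geq n-m$, that is, $\ucpbar(K)\geq n-m$, as claimed.

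I do not expect any genuine obstacle, since this is a bookkeeping corollary of the previous proposition. The one point worth flagging explicitly in the writeup is that one cannot apply the sharper second statement of \cref{prop:two-bounds} to $-K$: doing so would require $a_i>b_j$, which is false under our hypotheses. This is precisely why the conclusion is asymmetric, with the sharp bound $\ucp(K)\geq m$ on one side and only the weaker bound $\ucpbar(K)\geq n-m$ on the other.
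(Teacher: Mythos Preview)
Your proposal is correct and follows exactly the same route as the paper: apply the second statement of \cref{prop:two-bounds} directly to $K$ for the bound $\ucp(K)\geq m$, then rewrite $-K=(\#_{j=1}^m K_{b_j})\#(\#_{i=1}^n(-K_{a_i}))$ and apply the first statement of \cref{prop:two-bounds} to obtain $\ucp(-K)\geq n-m$, i.e.\ $\ucpbar(K)\geq n-m$. Your added remark explaining why the sharper second statement cannot be applied to $-K$ is a nice clarification that the paper does not spell out.
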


\begin{proof}
By the second part of \cref{prop:two-bounds}, we know that $\ucp(K)\geq m$. Consider $-K:=(\#_{j=1}^m K_{b_j}) \# (\#_{i=1}^n (-K_{a_i})) $. Applying the first part of \cref{prop:two-bounds} to $-K$, we see that 
\[\ucp(-K)\geq n-m,\] implying that $\ucpbar(K)\geq n-m$, as needed.
\end{proof}

The bounds above are optimal in the sense that there is no better bound for this family that only depends on $n$ and $m$. For example, in the case that $m=0$ or $n=0$, we recover \cref{prop:connectedsumsoftwist}. However, these are far from the natural upper bounds, as we show in the next example. 

\begin{example}\label{BPH-ex}
Let $\displaystyle K= (p+q) K_3-p K_4$ for $p,q \geq 0$. Then $p \leq \ucp(K)\leq 2p+q$ and $q \leq \ucpbar(K) \leq p+q$, as follows. The lower bounds come from \cref{cor:bigucp2ucp2bar}. Note that $K_3$ can be transformed into the stevedore knot $K_2$, which is slice, by a single positive to negative crossing change, and that $-K_a$ can be unknotted by a single positive to negative crossing change for any $a \in \Z$. 
Therefore 
\begin{equation*}
p \leq \ucp(K)  \leq (p+q) \ucp(K_3)+  p\ucp(-K_4)=2p+q.
\end{equation*}
We also have that $K_3$ can be transformed into $K_4$ (and hence $K_3\#-K_4$ into the slice knot $K_4 \# -K_4$) via a single negative to positive crossing change and that $K_a$ can be unknotted by a single negative to positive crossing change, and so
\begin{equation*}
q \leq \ucpbar(K) \leq p\ucpbar(K_3-K_4)+q \ucpbar(K_3)=p+q. 
\end{equation*}
\end{example}
\smallskip

We finish this section by giving the proof of \cref{thm:finite-and-big}, which we first restate. 

\begin{reptheorem}{thm:finite-and-big}
For any $n\geq 0$, there exists a knot $K$ such that $n\leq \ucp(K)< \infty$ and $n\leq \ucpbar(K)< \infty$. 
Also,  for any $n\geq 0$ there exists a knot $J$ with trivial signature function and $\ucp(J)=n$.
\end{reptheorem}

\begin{proof}The first  sentence is proved by \cref{cor:bigucp2ucp2bar}.
The second sentence is proved by~\cref{prop:connectedsumsoftwist}, by taking the mirror images of the knots, i.e.\ we take $K:= \#^n_{i=1} \ol{K_{a_i}}$ where $a_i\geq 3$ for all $i$. That the signature function is trivial can be seen from direct computation, as follows. Briefly, the knot $K_{a_i}$ has Seifert matrix $M_{a_i}:=\left[\begin{array}{cc} 1	&1\\ 0	&-a_i\end{array}\right]$, and it is easy to compute that $M_{a_i}+M_{a_i}^T$ has trivial signature for $a_i\geq 1$. Since $K_{a_i}$ has genus one, it follows that the signature function is also trivial. Moreover, as previously mentioned, an infinite family of the knots $\{K_{a_i}\}$ for $a_i\geq 3$ are algebraically slice, as shown e.g.\ in~\cite{CG86}*{p.\~182}. Since the signature function is additive under connected sum, the claim follows. 
\end{proof}

\section{Pretzel knots}\label{sec:pretzels}
In this section we will apply \cref{thm:beast} to derive some bounds on $\ucp$ for certain families of pretzel knots. We begin by recalling a result of Bryant, which will enable us to easily locate pretzel knots with trivial ordinary signature. 

\begin{proposition}[\cite{KB}*{Theorem~2.1}]\label{prop:Bryant}
Let $K=P(q_1, \dots, q_{p+n})$, for $p,n\geq 0$, be a pretzel knot for which $p$ of the parameters $q_i$ are positive, $n$ are negative, and all are odd. 
Then the signature of $K$ is given by 
\[\sigma(K)=p-n-\sgn\left(\frac{1}{q_1}+\dots+\frac{1}{q_{p+n}}\right),\] where $\sgn$ denotes the sign function $\sgn(x):=|x|/x$ for $x \neq 0$ and $\sgn(0):=0$. 
\end{proposition}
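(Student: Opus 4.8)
Write $k := p+n$; since $K$ is a knot and all $q_i$ are odd, $k$ is odd. Recall that for any Seifert matrix $V$ of $K$ one has $\sigma(K)=\sign(V+V^{T})$. The plan is to compute this from the obvious pretzel Seifert surface and then reduce the whole computation to an elementary statement about restricting a diagonal quadratic form to a hyperplane, namely that the symmetrized Seifert form of $P(q_1,\dots,q_k)$ is (in a suitable basis, up to overall sign) the restriction of $\operatorname{diag}(q_1,\dots,q_k)$ to $\{\sum_i x_i=0\}$.

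First I would set up the standard pretzel surface $F$: two disks joined by $k$ bands $B_1,\dots,B_k$, with $B_i$ carrying $q_i$ half-twists. Since every $q_i$ is odd, the orientation of the bottom disk induced through each band $B_i$ from a fixed orientation of the top disk is the same, so these choices are globally compatible and $F$ is orientable; as $\chi(F)=2-k$ and $\partial F=K$ is connected, $F$ has genus $(k-1)/2$ and $H_1(F;\Z)\cong\Z^{k-1}$, with basis the loops $b_1,\dots,b_{k-1}$, where $b_i$ runs up $B_i$, across the top disk, down $B_{i+1}$, and back across the bottom disk. Since $b_i$ and $b_j$ can be made disjoint when $|i-j|\ge 2$, the Seifert matrix $V$ is tridiagonal: the diagonal records the framings of the $b_i$ and the off-diagonal comes from the shared band $B_{i+1}$. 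A direct inspection (being careful that $b_i$ traverses $B_i$ and $B_{i+1}$ in opposite directions) gives, up to an overall sign fixed by orientation conventions,
\[
V+V^{T}\;=\;M:=\begin{pmatrix}
q_1+q_2 & -q_2 & & \\
-q_2 & q_2+q_3 & -q_3 & \\
 & \ddots & \ddots & \ddots \\
 & & -q_{k-1} & q_{k-1}+q_k
\end{pmatrix}.
\]
The key point is that $M$ is precisely the Gram matrix, in the basis $b_i\mapsto\varepsilon_i-\varepsilon_{i+1}$, of the form $Q:=\operatorname{diag}(q_1,\dots,q_k)$ on $\R^k$ restricted to the hyperplane $H:=\{x\in\R^k:\sum_i x_i=0\}$, since $Q(\varepsilon_i-\varepsilon_{i+1},\varepsilon_i-\varepsilon_{i+1})=q_i+q_{i+1}$ and $Q(\varepsilon_i-\varepsilon_{i+1},\varepsilon_{i+1}-\varepsilon_{i+2})=-q_{i+1}$.

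It remains to compute $\sign(Q|_H)$, for which I would invoke the following standard fact: if $Q$ is a nondegenerate symmetric form on a finite-dimensional real space $V$, if $\phi\colon V\to\R$ is a nonzero functional, $H=\ker\phi$, and $w\in V$ is the vector with $Q(w,-)=\phi$, then $\sign(Q|_H)=\sign(Q)-\sgn\big(Q(w,w)\big)$. When $Q(w,w)\ne 0$ this is immediate from the $Q$-orthogonal splitting $V=H\oplus\R w$; when $Q(w,w)=0$ one completes $w$ to a hyperbolic pair $\{w,u\}$ and observes that the $Q$-orthogonal complement of $\operatorname{span}(w,u)$ lies in $H$ and that $H=\R w\oplus(\operatorname{span}(w,u))^{\perp}$, whence $\sign(Q|_H)=\sign(Q)$. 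Applying this with $\phi(x)=\sum_i x_i$ gives $w=(1/q_1,\dots,1/q_k)$, so $Q(w,w)=\sum_i q_i\cdot q_i^{-2}=\sum_i 1/q_i$, while $\sign(Q)=p-n$ because $p$ of the $q_i$ are positive and $n$ negative. Therefore $\sigma(K)=\sign(M)=\sign(Q|_H)=(p-n)-\sgn\big(\sum_i 1/q_i\big)$, which is the claimed formula.

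The main obstacle is the Seifert-pairing computation, and in particular pinning down the global sign in $V+V^{T}=\pm M$, which depends on the pretzel diagram and signature conventions in use. This is best handled either by tracking orientations carefully throughout, or --- more cheaply --- by noting that replacing $K$ by its mirror simultaneously negates $\sigma(K)$, each $q_i$, and $\sum_i 1/q_i$ and swaps $p\leftrightarrow n$, so the asserted identity is insensitive to the ambiguity and it suffices to fix the sign on a single example, e.g.\ $P(1,1,1)$, a trefoil of signature $\pm 2$. Everything else --- the orientability and genus of $F$, the tridiagonality of $V$, and the hyperplane lemma --- is routine.
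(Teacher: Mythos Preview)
The paper does not prove this proposition: it is quoted from Bryant \cite{KB} without proof and used as a black box.  So there is no ``paper's proof'' to compare against.

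That said, your argument is correct and self-contained.  The identification of the symmetrized Seifert form with the restriction of $\operatorname{diag}(q_1,\dots,q_k)$ to the hyperplane $\{\sum_i x_i=0\}$ is the right structural observation, and your hyperplane lemma --- including the separate treatment of the isotropic case $Q(w,w)=0$, where $w$ lies in the radical of $Q|_H$ --- is exactly what is needed; it also makes transparent why $\sum_i 1/q_i$ is the relevant quantity.  Your handling of the global sign by mirror symmetry plus a single test case is legitimate, because the ambiguity $V+V^T=\pm M$ comes from a fixed orientation convention and does not depend on $K$.  If you wanted to pin it down directly in this paper's conventions, note that the proof of Proposition~\ref{prop:ucp2topis1} records the Seifert matrix of $P(a,b,c)$; symmetrizing gives $\left(\begin{smallmatrix} b+c & b\\ b & a+b\end{smallmatrix}\right)$, which is congruent to your $M$ (with $q_1=a$, $q_2=b$, $q_3=c$) via the change of basis $(b_1,b_2)\mapsto(b_2,-b_1)$, so in fact no sign flip is needed.
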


Next we give our most general restriction on positively slice pretzel knots.

\begin{proposition}\label{prop:equations} 
Fix $k\geq 1$. Let $K=P(p_1, \dots, p_{k+1}, q_1, \dots q_{k})$ be a pretzel knot such that each $p_i$ and each $q_j$ is odd. Suppose further that $p_i\geq 3$ and $q_j \leq -3$
for all $i,j$, and $\sigma(K)=0$. 

If $K$ is $H$-slice in $\#^m \CP^2$ then there exist integers $a_i^j$ and $b_\ell^j$, for $1 \leq i \leq k+1$, $1 \leq j \leq {k}$, and $1 \leq \ell \leq m$, such that the following holds.
\begin{enumerate}
\item\label{item:pretzel-1} For all $1 \leq j \leq {k}$,
\[\sum_{i=1}^{k+1} a_i^j=1 \text{ and } \sum_{i=1}^{k+1} (a_i^j)^2 p_i +2  \sum_{\ell=1}^m (b_\ell^j)^2 = |q_j|.\]
\item\label{item:pretzel-2} For all $1 \leq j' \neq j \leq {k}$, 
\[ \sum_{i=1}^{k+1} a_i^j a_i^{j'} p_i + 2 \sum_{\ell=1}^m b_\ell^j b_\ell^{j'}=0.\]
\end{enumerate}
\end{proposition}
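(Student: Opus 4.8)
The plan is to adapt the arguments of \cref{prop:connectedsumsoftwist,prop:two-bounds} to the pretzel setting: produce a definite plumbed filling of $\Sigma_2(K)$, cap it off with the manifold supplied by \cref{thm:beast}, and read the claimed Diophantine identities off Donaldson's theorem.

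First I would note that the hypothesis $\sigma(K)=0$ is precisely what makes the filling available. By \cref{prop:Bryant}, with $k+1$ positive and $k$ negative odd parameters, $\sigma(K)=(k+1)-k-\sgn\!\big(\sum_i\tfrac1{p_i}+\sum_j\tfrac1{q_j}\big)=1-\sgn\!\big(\sum_i\tfrac1{p_i}+\sum_j\tfrac1{q_j}\big)$, so $\sigma(K)=0$ forces $\sum_i\tfrac1{p_i}+\sum_j\tfrac1{q_j}>0$; by \cref{ex:pretzel-cover} this positivity guarantees that $\Sigma_2(K)$ bounds the canonical negative definite plumbed $4$-manifold $W=W(G)$ of \cref{fig:pretzel-plumbing}, whose weighted tree $G$ has a central vertex $v_0$, one leg recording each positive parameter $p_i$, and one leg recording each negative parameter $q_j$. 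I would fix the vertex basis of $H_2(W;\Z)$ and pass to $-W$, which is positive definite with $\partial(-W)=-\Sigma_2(K)$. Next, since $K$ is $H$-slice in $\#^m\CP^2$ with $\sigma(K)=0$, \cref{thm:beast} provides a smooth, compact, oriented, positive definite $X$ with $\partial X=\Sigma_2(K)$, $b_2(X)=2m$, and intersection form of half-integer surgery type; fix a basis $u_1,v_1,\dots,u_m,v_m$ of $H_2(X;\Z)$ with $u_\ell\cdot u_{\ell'}=2\delta_{\ell\ell'}$ and $u_\ell\cdot v_{\ell'}=\delta_{\ell\ell'}$. Then $Z:=X\cup_{\Sigma_2(K)}(-W)$ is closed, smooth, oriented and positive definite, so by Donaldson's Theorem~A its form is $(\Z^N,\Id)$ with $N=2m+b_2(W)$, and since the pieces are glued along a rational homology sphere there is a lattice embedding
\[
\varphi\colon \big(H_2(X),Q_X\big)\oplus\big(H_2(W),-Q_W\big)\hookrightarrow(\Z^N,\Id).
\]

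The remainder of the argument would be the now-familiar analysis of $\varphi$ on this basis. Each class of square $2$ — the $u_\ell$'s and the weight-$(-2)$ vertices occurring on the legs of $-W$ — is carried by $\varphi$ to $\pm(e_a-e_b)$ for distinct standard basis vectors, since $2$ has a unique expression as a sum of nonzero squares; using the relative intersection pattern prescribed by $G$ together with the pigeonhole argument of \cref{cor:specialucp2} to exclude spurious coincidences of basis vectors, I would normalise so that $\varphi(u_\ell)=h_\ell-l_\ell$ and so that the images telescope along each leg. The telescoping on the $i$-th positive leg yields an auxiliary vector $P_i\in\Z^N$ that is a sum of $p_i$ distinct standard basis vectors (hence $P_i\cdot P_i=p_i$ and $P_i\cdot P_{i'}=0$), orthogonal to every $\varphi(u_\ell)$, with $P_i$ meeting the central class in $+1$. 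Each leg-class $Q_j$ attached to a negative parameter must then expand as $\varphi(Q_j)=\sum_{i=1}^{k+1}a_i^j P_i+\sum_{\ell=1}^m b_\ell^j(h_\ell+l_\ell)$ for integers $a_i^j,b_\ell^j$, all other components being forced to vanish by orthogonality of $Q_j$ to the remaining basis classes. Computing intersections and using $(h_\ell+l_\ell)\cdot(h_{\ell'}+l_{\ell'})=2\delta_{\ell\ell'}$ gives
\[
|q_j|=\varphi(Q_j)\cdot\varphi(Q_j)=\sum_i (a_i^j)^2 p_i+2\sum_\ell (b_\ell^j)^2,\qquad 0=\varphi(Q_j)\cdot\varphi(Q_{j'})=\sum_i a_i^j a_i^{j'} p_i+2\sum_\ell b_\ell^j b_\ell^{j'}\ \ (j\ne j'),
\]
while pairing $\varphi(Q_j)$ with the central class gives $1=Q_j\cdot v_0=\sum_i a_i^j$. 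These are exactly the asserted identities.

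The step I expect to be the real obstacle is the combinatorial normalisation inside the previous paragraph: extracting the precise form of \cref{fig:pretzel-plumbing}, making the change of basis that turns the legs of $W$ into the orthogonal square-$p_i$ classes $P_i$ and the central class, and — most delicately — ruling out, leg by leg, that the images of distinct square-$2$ classes share standard basis vectors in ways not permitted by $G$. This is precisely the bookkeeping carried out in Steps~1--2 of \cref{prop:connectedsumsoftwist} and in \cref{cor:specialucp2}; the star shape of the plumbing here, rather than a linear chain, makes the casework heavier, but once it is complete the identities above drop out immediately.
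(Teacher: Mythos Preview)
Your proposal is correct and follows essentially the same approach as the paper: build the negative definite plumbed filling $W$ of $\Sigma_2(K)$ from \cref{fig:pretzel-plumbing}, cap with the half-integer-type manifold from \cref{thm:beast}, apply Donaldson, and analyze the resulting lattice embedding. The only cosmetic differences are that the paper keeps $W$ negative definite and applies \cref{cor:beastmate} to $-K$ (rather than reversing the orientation on $W$ and applying \cref{thm:beast} directly), and that the paper treats the central vertex $y$ of weight $-(k+1)$ as a separate explicit step---showing $\varphi(y)=-\sum_i e_i^{p_i}$ with no $f_\ell,g_\ell$ components, which is exactly what guarantees your formula $Q_j\cdot v_0=\sum_i a_i^j$ has no extra $b_\ell^j$ terms---whereas you fold this into the ``combinatorial normalisation'' you flag at the end.
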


\begin{proof}
Since $\sigma(K)=0$, we know from~\cref{prop:Bryant} that 
\[
\frac{1}{p_1}+\dots+\frac{1}{p_{k+1}}+ \frac{1}{q_1}+\dots+\frac{1}{q_{k}}>0.\]
Then as in \cref{ex:pretzel-cover}, by the assumptions on the parameters $\{p_i\}$ and $\{q_j\}$, there is a compact, connected, oriented, plumbed $4$-manifold $W$, with negative definite intersection form and plumbing graph given in \cref{fig:pretzel-plumbing}, such that $\partial W =\Sigma_2(K)$. We label the vertices of the plumbing diagram for $W$ as in \cref{fig:pretzel-plumbing}, and orient them so that adjacency corresponds to intersection $+1$. Specifically, we have generators \[x_1^1, \dots, x_1^{p_1-1}, x_2^1, \dots, x_2^{p_2-1}, \dots, x_{k+1}^1, \dots, x_{k+1}^{p_{k+1}-1}, y, z_1, z_2, \dots, z_{k}, \] where each $x_i^j$ is $-2$-framed, each $z_i$ is $q_i$-framed, and the vertex $y$ is $-(k+1)$-framed. Note that the rank of $H_2(W)$ is $\sum_{i=1}^{k+1} (p_i-1)+k+1=\sum_{i=1}^{k+1} p_i$.
\begin{figure}[htb]
\centering
\begin{tikzpicture}
\node[anchor=south west,inner sep=0] at (0,0){\includegraphics[width=11.5cm]{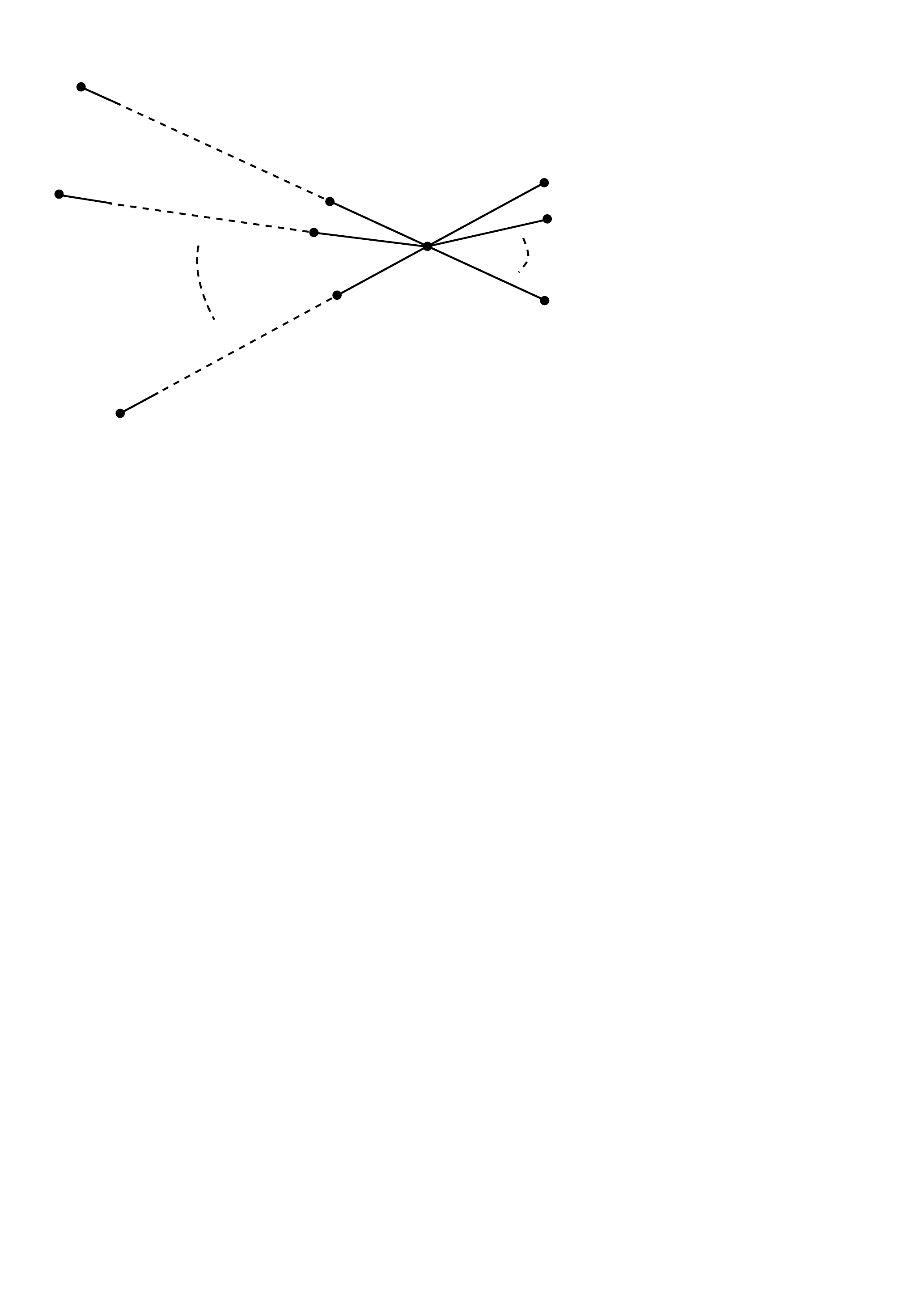}};
\node at (0.5,8.1) {\blue{$x^1_1$}};
\node at (0.4,7.2) {$-2$};
\node at (0.1,5.7) {\blue{$x^1_2$}};
\node at (0,4.75) {$-2$};
\node at (1.25,0.55) {\blue{$x^1_{k+1}$}};
\node at (1.5,-0.3) {$-2$};
\node at (6.55,5.5) {\blue{$x^{p_1-1}_1$}};
\node at (6.35,4.65) {$-2$};
\node at (5.6,4.75) {\blue{$x^{p_2-1}_2$}};
\node at (6,3.9) {$-2$};
\node at (6,3.1) {\blue{$x^{p_{k+1}-1}_{k+1}$}};
\node at (6.5,2.5) {$-2$};
\node at (8.7,4.35) {\blue{$y$}};
\node at (8.5,3.35) {$-(k+1)$};
\node at (11,5.75) {\blue{$z_1$}};
\node at (11.25,4.9) {\blue{$z_2$}};
\node at (11.5,3.1) {\blue{$z_k$}};
\node at (11.65,5.3) {$q_1$};
\node at (11.5,4.3) {$q_2$};
\node at (11.25,2.4) {$q_k$};
\end{tikzpicture}
\caption{A plumbing diagram for a $4$-manifold $W$ with $\partial W= \Sigma_2(K)$, where $K=P(p_1, \dots, p_{k+1}, q_1, \dots, q_k)$, with each $p_i$ positive and each $q_j$ negative. Adjacent generators have intersection $+1$.}
\label{fig:pretzel-plumbing}
\end{figure}

Since $K$ is $H$-slice in $\#^m \CP^2$, we know that $-K$ is $H$-slice in $\#^m \ol{\CP^2}$. So by \cref{cor:beastmate}, we know that $\Sigma_2(-K)=-\Sigma_2(K)=\partial X$ where $X$ is a smooth, compact, oriented $4$-manifold $X$ with $b_2(X)=2m$ and negative definite intersection form of negative half-integer surgery type. Let $u_1, v_1, u_2, v_2, \dots, u_m, v_m$ denote generators for $H_2(X;\Z)$ satisfying $u_i \cdot u_j=-2 \delta_{i,j}$ and  $u_i \cdot v_j= \delta_{i,j}$ for all $1 \leq i, j \leq m$. 

As before, we consider the union $W\cup X$. This is a smooth, closed $4$-manifold with negative definite intersection form, and therefore by Donaldson's Theorem~A~\cite{donaldson} the intersection form is standard, i.e.\ given by the negative of the identity matrix. Moreover, since $W$ and $X$ are glued along a rational homology sphere, we have a lattice embedding
\[ 
\varphi\colon \big(H_2(W) \oplus H_2(X), Q_W\oplus Q_X) \hookrightarrow (\Z^N, -\Id),
\]
where $N:= 2m+\sum_{i=1}^{k+1} p_i$. Denote the standard basis for $\mathbb{Z}^{N}$ by 
\begin{equation*}
	e_1^1, \dots, e_1^{p_1}, e_2^1, \dots, e_2^{p_2}, \dots, e_{k+1}^1, \dots, e_{k+1}^{p_{k+1}}, f_1, g_1,\dots, f_{m}, g_m.
\end{equation*}

\setcounter{step}{0}
\begin{step} Basis elements of square $-2$. 
 \end{step}
As in~\cref{step:alternating-1} of the proof of \cref{prop:connectedsumsoftwist}, we assume without loss of generality that 
\begin{alignat*}
\varphi(x_i^j)&=e_i^j- e_i^{j+1} &&\text{ for } i=1, \dots, k+1 \text{ and }1 \leq j \leq p_i-1,\\
\varphi(u_\ell)&= f_{\ell}-g_\ell &&\text{ for }  \ell=1, \dots m.
\end{alignat*}
Note that we are using the assumption that $p_i\geq 3$ to conclude that the basis elements in the images $\varphi(x_i^j)$ and $\varphi(x_k^l)$ are disjoint when $i\neq k$. 

\begin{step}\label{step:pretzel-2}
Considering $\varphi(y)$.
\end{step}

We have that $\varphi(x_i^{p_i-1})=e_i^{p_i-1} - e_i^{p_i}$ and $\langle  \varphi(y), \varphi (x_i^{p_i-1})  \rangle=1$, for all $i$. Thus,
\begin{equation} \label{eq:off-by-1}
	\langle  \varphi(y), \varphi (x_i^{p_i-1})  \rangle =\langle  \varphi(y), e_i^{p_i-1} \rangle - \langle  \varphi(y), e_i^{p_i}\rangle = 1,
\end{equation}
 that is, the coefficients of $e_i^{p_i-1}$ and $e_i^{p_i}$ in  $\varphi(y)$ differ by 1. In particular, for any $i=1, 2, \dots, k+1$, they are not both 0. 
 At the same time, for $1\leq j \leq p_i-2$, $$\langle  \varphi(y), \varphi(x_i^{j}) \rangle = \langle  \varphi(y), e_i^j \rangle - \langle  \varphi(y),  e_i^{j+1} \rangle = 0.$$ 
Therefore, we can inductively conclude that, for $j, j'\in \{1, \dots, p_i-2\}$, 
\begin{equation}\label{eq:equal-coeffs}
	\langle  \varphi(y), e_i^j \rangle = \langle  \varphi(y), e_i^{j'} \rangle.
\end{equation}
 In other words, for $1\leq j \leq p_i-2$, the coefficient of $e_i^j$ in $ \varphi(y)$ is determined by $i$ and is independent of $j$.
This reasoning applies to all $i\in\{1, \dots, k+1\}$. We also know that  
\begin{equation} \label{eq:y-sqared}
	\langle \varphi(y), \varphi(y) \rangle = -k-1.
\end{equation}
This equation would be violated if for any $i\in\{1, \dots, k+1\}$ and any $j\in\{1,\dots, p_i-1\}$ we had $\langle  \varphi(y), e_i^j \rangle \neq 0$, due to \eqref{eq:off-by-1} and~\eqref{eq:equal-coeffs}. We thus conclude for each $i$ that $\langle  \varphi(y), e_i^j \rangle = 0$ for $j\in\{1,\dots, p_i-1\}$ and $\langle  \varphi(y), e_i^{p_i} \rangle = -1$. To summarize, we have that 
\begin{equation}\label{eqn:y}
\varphi(y)=-\sum_i^{k+1}e_i^{p_i}.
\end{equation}
\begin{step} Considering $\varphi(z_j)$. 
\end{step}
For $j=1, \dots, k$,  let
\begin{equation}\label{eqn:zj}
\varphi(z_j) = \sum_{i=1}^{k+1} \left(a_i^j \sum_{n=1}^{p_i} e_i^n \right) + \sum_{\ell=1}^{m} b_\ell^j(f_\ell+g_\ell)
\end{equation} 
for some parameters $a_i^j,  b_\ell^j$. As before, the coefficient of $e_i^n$ depends only on $i$ and not on $n$, since  $\langle \varphi(x_i), \varphi(z_j)\rangle = 0$ for all $i, j$. Similarly, the coefficients of $f_\ell$ and $g_\ell$ are equal since $\langle \varphi(w_i), \varphi(z_j)\rangle = 0$ for all $i, j$.

Now statement~\eqref{item:pretzel-1} follows from recalling that $\langle \varphi(z_j),\varphi(y)\rangle=1$ and 
$\langle\varphi(z_j), \varphi(z_j)\rangle= q_j$, for each $j$, and applying \eqref{eqn:y} and \eqref{eqn:zj}. 
Similarly, statement~\eqref{item:pretzel-2} follows from the fact that $\langle \varphi(z_j),\varphi(z_{j'})\rangle=0$. 
\end{proof}

Recall that the oriented diffeomorphism type of $\Sigma_2(K)$ for a pretzel knot $K$ does not depend on the order of the twisting parameters. More generally, mutant knots have (oriented) diffeomorphic double branched covers~\cite{viro}, so we have the following corollary.

\begin{corollary}\label{cor:mutant-equations}
Let $K$ be a mutant of the pretzel knot $P(p_1, \dots, p_{k+1}, q_1, \dots q_{k})$ where each $p_i$ and each $q_j$ is odd, and further assume that $p_i\geq 3$ and $q_j \leq -3$ for all $i,j$, and $\sigma(K)=0$. 

If $K$ is $H$-slice in $\#^m \CP^2$ then there exist integers $a_i^j$ and $b_\ell^j$, for $1 \leq i \leq k+1$, $1 \leq j \leq {k}$, and $1 \leq \ell \leq m$, such that the following statements hold.
\begin{enumerate}
\item\label{item:mutant-pretzel-1} For all $1 \leq j \leq {k}$,
\[\sum_{i=1}^{k+1} a_i^j=1 \text{ and } \sum_{i=1}^{k+1} (a_i^j)^2 p_i +2  \sum_{\ell=1}^m (b_\ell^j)^2 = |q_j|.\]
\item\label{item:mutant-pretzel-2} For all $1 \leq j' \neq j \leq {k}$, 
\[ \sum_{i=1}^{k+1} a_i^j a_i^{j'} p_i + 2 \sum_{\ell=1}^m b_\ell^j b_\ell^{j'}=0.\]
\end{enumerate}
\end{corollary}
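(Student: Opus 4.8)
The plan is to run the proof of \cref{prop:equations} essentially verbatim, the only change being how one produces the negative definite plumbing $W$ with $\partial W=\Sigma_2(K)$. Indeed, once $W$ is in hand the argument of \cref{prop:equations} uses nothing about $K$ except that $\sigma(K)=0$ and that $\partial W=\Sigma_2(K)$, so it applies unchanged to any knot with those two properties.

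First I would invoke Viro's theorem \cite{viro}, recalled just above, that mutant knots have orientation-preservingly diffeomorphic double branched covers: writing $P:=P(p_1,\dots,p_{k+1},q_1,\dots,q_k)$, this gives $\Sigma_2(K)\cong\Sigma_2(P)$. Next, since the knot signature is a mutation invariant and $K$ is a mutant of $P$, the hypothesis $\sigma(K)=0$ yields $\sigma(P)=0$; Bryant's formula \cref{prop:Bryant} then reads $0=(k+1)-k-\sgn\!\big(\tfrac1{p_1}+\dots+\tfrac1{p_{k+1}}+\tfrac1{q_1}+\dots+\tfrac1{q_k}\big)$, so that $\tfrac1{p_1}+\dots+\tfrac1{q_k}>0$. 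With this inequality, the construction of \cref{ex:pretzel-cover} yields precisely the plumbed $4$-manifold $W$ of \cref{fig:pretzel-plumbing}, with generators $\{x_i^j\}$, $y$, $\{z_j\}$ and framings as there, and $\partial W=\Sigma_2(P)\cong\Sigma_2(K)$.

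From here I would repeat the proof of \cref{prop:equations}: from $\sigma(-K)=-\sigma(K)=0$ and the $H$-sliceness of $-K$ in $\#^m\ol{\CP^2}$, \cref{cor:beastmate} supplies a smooth, compact, oriented, negative definite $4$-manifold $X$ of negative half-integer surgery type with $b_2(X)=2m$ and $\partial X=-\Sigma_2(K)$; the union $W\cup X$ along the rational homology sphere $\Sigma_2(K)$ is a closed negative definite $4$-manifold, hence has diagonalizable intersection form by Donaldson's Theorem~A, and the resulting lattice embedding $\varphi\colon (H_2(W)\oplus H_2(X),Q_W\oplus Q_X)\hookrightarrow(\Z^N,-\Id)$ gives, after the same three steps analyzing the square-$(-2)$, square-$(-(k+1))$, and square-$q_j$ basis vectors, exactly the equations \eqref{item:mutant-pretzel-1} and \eqref{item:mutant-pretzel-2}.

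The one point that needs care is the implication $\sigma(K)=0\Rightarrow\sigma(P)=0$, which rests on the mutation invariance of the knot signature; this is classical and I would cite it rather than reprove it. I do not anticipate any other obstacle: because mutation preserves the \emph{oriented} homeomorphism type of $\Sigma_2$, there is no new orientation bookkeeping beyond \cref{prop:equations}, and the lattice-theoretic heart of the argument depends only on $W$ and $X$, not on the particular knot in the mutation class.
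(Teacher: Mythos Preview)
Your proposal is correct and matches the paper's approach exactly: the paper's entire proof is the sentence preceding the corollary, namely the observation that mutant knots have orientation-preservingly diffeomorphic double branched covers \cite{viro}, so the same $W$ from \cref{prop:equations} has $\partial W=\Sigma_2(K)$ and the argument runs unchanged. You in fact supply a detail the paper leaves implicit---that mutation invariance of the signature is needed to transfer $\sigma(K)=0$ to $\sigma(P)=0$ in order to invoke Bryant's formula and obtain the plumbing---so your write-up is if anything more complete.
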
 

We can now apply the above to characterize when $3$-strand pretzel knots with odd parameters are positively slice.
Compare the equivalence of the conditions (1) and (2) below with~\cref{lem:slicetotwist}.

\begin{corollary}\label{cor:3strandposslice}
Let $K$ be a $3$-strand pretzel knot, none of whose parameters are $\pm 1$, and all of whose parameters are odd.
The following are equivalent.
\begin{enumerate}
\item $K$ is positively slice.
\item $K$ can be transformed to a slice knot by positive to negative crossing changes.
\item Either (a) $K=P(-q-2k,q,r)$ for odd $q,r\geq 3$  and $k \geq 0$ or (b) $K$ has at least two negative parameters. 
\end{enumerate}
\end{corollary}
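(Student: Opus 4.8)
The plan is to prove the cycle of implications $(3)\Rightarrow(2)\Rightarrow(1)\Rightarrow(3)$. The implication $(2)\Rightarrow(1)$ is immediate from the discussion in \cref{sec:background}, since a knot obtained from a slice knot by $m$ positive-to-negative crossing changes is $H$-slice in $\#^m\CP^2$. For $(3)\Rightarrow(2)$, I would treat the two cases separately. In case (a), with $K=P(-q-2k,q,r)$ for odd $q,r\geq 3$ and $k\geq0$: a single crossing change in the strand with parameter $r$ reduces $r$ by $2$ (a positive-to-negative crossing change since the strand has positive parameter), so after $(r-q)/2$ such changes we reach $P(-q-2k,q,q)$; one checks this is a slice (indeed ribbon) pretzel knot, e.g.\ it is of the form $P(a,-a-2k,\dots)$ with a palindromic arrangement, or directly that $P(-q-2k,q,q)$ has a ribbon move. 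Actually the cleanest route: $P(-q-2k, q, r)$ can be changed by positive-to-negative crossing changes in the $r$-strand to $P(-q-2k,q,q)$, and pretzel knots $P(a,-a,b)$ are slice — so I would instead aim the crossing changes to reach $P(-q, q, r')$ after $k$ positive-to-negative changes in the first strand (increasing $-q-2k$ toward $-q$), then note $P(-q,q,r)$ is ribbon only for $r=\mp q$; so the honest statement is that we reach some slice pretzel by a combination. Let me restructure: in case (a) I will show $K$ is obtained from the slice knot $P(-q, q, -q)$ (or the relevant ribbon pretzel) by positive-to-negative crossing changes, carefully tracking signs using the convention that increasing a parameter from negative toward zero, or decreasing a positive parameter, are both positive-to-negative changes. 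In case (b), with at least two negative parameters, say $K=P(p,q,r)$ with $q,r<0$: if $p<0$ as well then $\sigma(K)\neq0$ would be possible, but regardless one changes crossings in the positive strand (if any) down to $-|p|$-ish, or more simply uses that $P(p,-p,r)$ is ribbon and $p$ positive parameter can be lowered or a negative one raised by positive-to-negative changes to reach such a form.

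The heart of the corollary is $(1)\Rightarrow(3)$, and this is where \cref{prop:equations} (equivalently \cref{cor:mutant-equations}) does the work. Suppose $K$ is positively slice and that (3) fails, i.e.\ $K$ is \emph{not} of the form in (a) and has \emph{at most one} negative parameter. Since all parameters are odd and none is $\pm1$, and since a $3$-strand pretzel knot $P(a,b,c)$ is a knot iff at most one of $a,b,c$ is even (here all odd, so it is genuinely a knot), the failure of (3) leaves the cases: all three parameters positive, or exactly one negative. If all three are positive, then $\sigma(K)=3-\sgn(1/p_1+1/p_2+1/p_3)=2\neq0$ by \cref{prop:Bryant}, so $K$ cannot be positively slice (its signature function, in particular $\sigma(K)$, must be $\leq0$ by the Murasugi–Tristram bound \cref{prop:CNbound}, or already by \cite{CHH}*{Proposition~4.1}). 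So we may assume exactly one negative parameter; writing $K=P(p_1,p_2,q_1)$ with $p_1,p_2\geq3$ odd and $q_1\leq-3$ odd. Now \cref{prop:Bryant} gives $\sigma(K)=2-\sgn(1/p_1+1/p_2+1/q_1)$, so $\sigma(K)=0$ forces $1/p_1+1/p_2+1/q_1>0$, consistent with the hypotheses of \cref{prop:equations} with $k=1$. Applying \cref{prop:equations}: there exist integers $a_1^1,a_2^1$ and $b_\ell^1$ with $a_1^1+a_2^1=1$ and $(a_1^1)^2p_1+(a_2^1)^2p_2+2\sum_\ell(b_\ell^1)^2=|q_1|$. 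The condition $a_1^1+a_2^1=1$ with both integers forces $\{a_1^1,a_2^1\}$ to be of the form $\{t,1-t\}$; the minimum of $t^2p_1+(1-t)^2p_2$ over integers $t$ is attained near $t=p_2/(p_1+p_2)\in(0,1)$, so the minimum is $\min(p_1,p_2)$ at $t\in\{0,1\}$. Hence $|q_1|\geq \min(p_1,p_2)$, and if $|q_1|=\min(p_1,p_2)$ then all $b_\ell^1=0$ and $\{a_1^1,a_2^1\}=\{0,1\}$; this is exactly the situation $q_1=-\min(p_1,p_2)$. But more importantly I want to \emph{rule out} the non-(a) cases: (a) says (after reordering, since reordering doesn't change the pretzel knot or its double cover) $K=P(-q-2k,q,r)$, i.e.\ $|q_1|\geq$ the smaller positive parameter with a specific arithmetic relation. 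So I must show the equations force $|q_1|$ and the multiset $\{p_1,p_2\}$ to satisfy: $\min(p_1,p_2)\le |q_1|$ and $|q_1|\equiv \min(p_1,p_2)\pmod 2$ with $|q_1|\geq\min(p_1,p_2)$ — which is automatic since all are odd — and additionally $|q_1|\le\max(p_1,p_2)+2k$-type constraint is vacuous; hence in fact \emph{every} such $K$ with $|q_1|\ge\min(p_1,p_2)$ is of form (a) with $q=\min(p_1,p_2)$, $k=(|q_1|-\min)/2\ge0$... wait, that requires $|q_1|\ge\min$, so the content of $(1)\Rightarrow(3)$ is precisely to derive $|q_1|\ge\min(p_1,p_2)$ from \cref{prop:equations}, which I just sketched via the integrality of $a_1^1,a_2^1$.

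So the key step, and the main obstacle, is the combinatorial/number-theoretic extraction from \cref{prop:equations} in the one-negative-parameter case: showing $\sum_\ell(b_\ell^1)^2$ can be nonzero freely but that, regardless, $|q_1|\ge\min(p_1,p_2)$, and conversely that this lower bound is exactly what form (a) encodes. I expect the subtlety to be bookkeeping around reorderings (the mutation-invariance of $\Sigma_2$, already noted before \cref{cor:mutant-equations}, lets me assume $q_1$ is the negative one and $p_1\le p_2$ WLOG) and around the borderline case $K=P(p_1,p_2,q_1)$ with small $|q_1|$ — e.g.\ making sure $P(3,3,-3)$-type knots and $P(3,5,-3)$ land correctly in (a). Finally I would record the one remaining loose end: when $K$ has exactly one negative parameter and is positively slice, I have shown it is of form (a); when it has two or three negative parameters, (3)(b) holds trivially (with the all-negative subcase actually also being, say, slice or handled directly), and the all-positive subcase is excluded by signature — completing the cycle. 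I would double-check orientation/sign conventions for "positive crossing change" against \cref{fig:gencrossing} so that the crossing changes in $(3)\Rightarrow(2)$ are genuinely positive-to-negative.
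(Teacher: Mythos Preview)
Your approach matches the paper's: prove the cycle $(3)\Rightarrow(2)\Rightarrow(1)\Rightarrow(3)$, with the substance in $(1)\Rightarrow(3)$ coming from \cref{prop:equations}. Your argument for $(1)\Rightarrow(3)$ is essentially correct and agrees with the paper's: reduce to exactly one negative parameter (the all-positive case dies by signature), then use the equations $a_1+a_2=1$ and $a_1^2 p_1 + a_2^2 p_2 + 2\sum_\ell b_\ell^2 = |q_1|$ to force $|q_1|\geq\min(p_1,p_2)$, which is exactly form~(a) after reordering. Two small corrections: Bryant's formula gives $\sigma(K)=p-n-\sgn(\cdots)=1-\sgn(\cdots)$ here, not $2-\sgn(\cdots)$; and you should explicitly dispatch the possibility $\sigma(K)=2$ (via the signature obstruction to positive sliceness) before invoking \cref{prop:equations}, which assumes $\sigma(K)=0$.

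The genuine gap is in $(3)\Rightarrow(2)$. Your assertion that ``$P(-q,q,r)$ is ribbon only for $r=\mp q$'' is wrong: $P(-q,q,r)$ is ribbon for \emph{every} odd $r$. A ribbon move cancels the two strands with parameters $q$ and $-q$, leaving an unknot; this is the standard fact the paper relies on. With this in hand, $(3)\Rightarrow(2)$ is one line in each case: in case~(a), $k$ positive-to-negative crossing changes in the $(-q-2k)$-strand take $K$ to the ribbon knot $P(-q,q,r)$; in case~(b) with $p,q<0$, positive-to-negative crossing changes increase $p$ to $|q|$, giving the ribbon knot $P(|q|,q,r)$. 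Your detours through $P(-q-2k,q,q)$ and $P(-q,q,-q)$ are unnecessary and stem from not knowing this fact. As you yourself flag, you must also check against the paper's pretzel conventions that raising a negative parameter by $2$ is a positive-to-negative crossing change; it is, and this is what makes both cases work.
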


\begin{proof}
We will eventually use~\cref{cor:mutant-equations}. However, we remind the reader that the (unoriented) isotopy class of $P(p,q,r)$ depends only on the set $\{p,q,r\}$, since cyclic permutations and the reversal permutation preserve the (unoriented) isotopy class of any pretzel knot and  
these permutations generate the symmetric group $S_3$. 
So in this case we could have used~\cref{prop:equations} directly.

First, note that (2) implies (1) by the discussion in~\cref{sec:background} (see e.g.~\cref{lem:slicetotwist}).  If (3)(a) holds, then $K$ can be transformed into the ribbon knot $P(-q,q,r)$ via $k$ positive to negative crossing changes.  If (3)(b) holds, then without loss of generality assume that $K=P(p,q,r)$ for $p,q<0$ and note that by increasing the parameter $p$ to $|q|$ via positive to negative crossing changes, we obtain the ribbon knot $P(|q|, q, r)$. Therefore, (3) implies (2).

Finally we show that (1) implies (3), by proving the contrapositive.  Note that if $K$ has three positive parameters then $\sigma(K)>0$ by \cref{prop:Bryant} and so $K$ is not positively slice. Assume $K=P(-p,q ,r)$ where $p, q,r>0$ and without loss of generality $q \leq r$.  Since we are not in Case (3)(a), 
we may assume that $p<q$, in order to show that $K$ is not positively slice. By \cref{prop:Bryant} we know that 
\[ \sigma(K)= 1 - \sgn(\frac{-1}{p}+ \frac{1}{q}+ \frac{1}{r}),\]
so either $\sigma(K)=2$ (in which case we would be done) or $\sigma(K)=0$. If $\sigma(K)=0$ and $K$ is $H$-slice in $\#^m\CP^2$, \cref{prop:equations} (or \cref{cor:mutant-equations}) implies there exist integers $a_q$, $a_r$, and $b_l$ such that 
\[ a_q+ a_r=1 \text{ and } a_q^2 q+ a_r^2 r+2  \sum_{l=1}^m b_l^2 = p.\] 
Since $p<q,r$, we must have $a_q=a_r=0$ from the second equation, which contradicts the first equation.
 We conclude that $K$ is not positively slice. 
\end{proof} 

As an immediate consequence, we obtain the following characterization of biprojectively slice odd 3-strand pretzel knots. 

\begin{corollary}~\label{cor:biprojectivelyslice}
Let $K$ be a $3$-strand pretzel knot, none of whose parameters are $\pm 1$, and all of whose parameters are odd.
Then the following statements are equivalent:
\begin{enumerate}
\item $K=P(p,q,r)$ is biprojectively slice 
\item $K$ can be transformed to a slice knot by either positive to negative or negative to positive crossing changes.
\item The set of parameters $\{p,q,r\}= \pm\{a, b, -a-c\}$ for some $a,b,c>0$. 
\end{enumerate}
\end{corollary}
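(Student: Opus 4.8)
The plan is to deduce this corollary from \cref{cor:3strandposslice} and its mirror image, together with a short bookkeeping step that repackages the resulting parameter conditions into the symmetric form in item~(3).

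First I would reduce (1) and (2) to one-sided statements. By definition $K$ is biprojectively slice exactly when it is both positively and negatively slice, and $\ucpbar(K)=\ucp(\ol K)$ with $\ol{P(p,q,r)}=P(-p,-q,-r)$, which is again an odd $3$-strand pretzel knot with no parameter $\pm 1$. Applying \cref{cor:3strandposslice} to $K$ gives: $K$ positively slice $\iff$ $K$ can be sliced by positive-to-negative crossing changes $\iff$ the parameter condition \cref{cor:3strandposslice}(3). Applying it to $\ol K$ gives the mirrored three-way equivalence for negative sliceness (``negative to positive'' crossing changes, and the negated parameter condition). Taking the conjunction of these two chains immediately yields (1)$\iff$(2) (reading the ``either\dots or'' in (2) as ``both''), so it only remains to prove that the conjunction of the two parameter conditions is equivalent to (3).

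Next I would unpack those two conditions. Rewriting \cref{cor:3strandposslice}(3) geometrically: $K=P(p,q,r)$ is positively slice iff it has at least two negative parameters, or it has exactly one negative parameter whose absolute value is at least the smaller of the two positive parameters (take that smaller positive parameter to be $s$ in $P(-s-2k,s,t)$). Mirroring: $K$ is negatively slice iff it has at least two positive parameters, or exactly one positive parameter whose absolute value is at least the smaller of the absolute values of the two negative parameters. An all-positive $K$ satisfies neither condition for positive sliceness (equivalently $\sigma(K)=2>0$ by \cref{prop:Bryant}), and symmetrically an all-negative $K$ is not negatively slice; so biprojective sliceness forces exactly two of $p,q,r$ to share a sign. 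Writing the absolute values of those two as $\alpha\le\beta$ and that of the third as $\gamma$: when the two like-sign parameters are positive, negative sliceness is automatic while positive sliceness is precisely $\gamma\ge\alpha$, and the ``two negative'' case is the mirror of this. Hence the conjunction of the two parameter conditions is equivalent to: exactly two parameters of one sign, with absolute values $\alpha\le\beta$, and the third of absolute value $\gamma\ge\alpha$.

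Finally I would match this with (3). If the two like-sign parameters are positive then $\{p,q,r\}=\{\alpha,\beta,-\gamma\}$ with $\gamma\ge\alpha$, so $a=\alpha$, $b=\beta$, $c=\gamma-\alpha$ gives $\{p,q,r\}=\{a,b,-a-c\}$; the ``two negative'' case follows by negating. Conversely, $\pm\{a,b,-a-c\}$ with $a,b,c>0$ has exactly two parameters of one sign, of absolute values $\min(a,b)\le\max(a,b)$, and a third of absolute value $a+c>a\ge\min(a,b)$, so the displayed condition holds. The one point requiring care is the boundary case $\gamma=\alpha$, which forces $c=0$: these knots are exactly $\pm P(\alpha,-\alpha,\beta)$, which are ribbon and hence trivially biprojectively slice, so one should either read (3) with $c\ge0$ or dispatch this family by hand. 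Beyond keeping the sign and ordering conventions consistent through this case analysis, I do not expect any substantive obstacle here: the corollary is essentially a repackaging of \cref{cor:3strandposslice} and its mirror.
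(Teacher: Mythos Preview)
Your proposal is correct and matches the paper's approach exactly: the paper presents this corollary as an immediate consequence of \cref{cor:3strandposslice} (applied to $K$ and to its mirror) and gives no further argument. Your careful observation about the boundary case $c=0$---the ribbon knots $\pm P(\alpha,-\alpha,\beta)$, which are biprojectively slice but not literally covered by condition~(3) with $c>0$---is a valid point that the paper does not address.
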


Proposition~\ref{prop:equations} also allows us to make the following precise computations of $\ucp$. 

\begin{corollary}\label{cor:precise3strand}
Let $p\geq3$ and $r$ be odd integers.
\begin{enumerate}
\item\label{item:precise-1} For $r > p+2$, we have $\ucp(P(p, -p-2, r))=1$.
\item\label{item:precise-2} For $r>p+4$, we have $\ucp(P(p, -p-4, r))=2$. 
\item\label{item:precise-3} For $r>p+6$, we have $\ucp(P(p, -p-6, r))=3$. 
\end{enumerate}
\end{corollary}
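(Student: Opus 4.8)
The plan is to prove $\ucp\big(P(p,-p-2t,r)\big)=t$ for each $t\in\{1,2,3\}$ by matching an easy geometric upper bound against a lower bound extracted from \cref{prop:equations}. Throughout, note that $P(p,-p-2t,r)$ is a genuine knot (all three parameters are odd), and that its unoriented isotopy type depends only on the multiset $\{p,-p-2t,r\}$.

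For the upper bound I would argue exactly as in the proof of \cref{cor:3strandposslice}(3)(a): the knot $P(p,-p-2t,r)=P(-p-2t,p,r)$ is obtained from the ribbon pretzel knot $P(-p,p,r)$ by inserting $t$ crossings into the first twist region, so $t$ positive-to-negative crossing changes in that region transform $P(p,-p-2t,r)$ into the slice knot $P(-p,p,r)$. By the inequalities $\ucp(K)\le u_s^+(K)\le u^+(K)$ recalled in \cref{sec:background}, this gives $\ucp\big(P(p,-p-2t,r)\big)\le t$ (in particular it is finite).

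For the lower bound, fix $t\in\{1,2,3\}$ and set $K:=P(p,-p-2t,r)$, which I would check satisfies the hypotheses of \cref{prop:equations} with $k=1$: all parameters are odd, $p\ge 3$ and $r>p+2t\ge 5$ so the two positive parameters are $\ge 3$, the negative parameter is $-p-2t\le -5$, and $\sigma(K)=0$ because $\tfrac1p-\tfrac1{p+2t}+\tfrac1r=\tfrac{2t}{p(p+2t)}+\tfrac1r>0$, so \cref{prop:Bryant} gives $\sigma(K)=2-1-1=0$. Suppose $K$ is $H$-slice in $\#^m\CP^2$. Then \cref{prop:equations} (in which condition~\eqref{item:pretzel-2} is vacuous when $k=1$), applied with $p_1=p$, $p_2=r$, $q_1=-(p+2t)$, yields integers $a_1,a_2$ and $b_1,\dots,b_m$ with
\[
a_1+a_2=1,\qquad a_1^2\,p+a_2^2\,r+2\sum_{\ell=1}^m b_\ell^2=p+2t .
\]
Since $r>p+2t$, the summand $a_2^2 r$ alone would exceed $p+2t$ unless $a_2=0$; hence $a_2=0$, $a_1=1$, and the displayed equation collapses to $\sum_{\ell=1}^m b_\ell^2=t$.

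I would then finish with elementary number theory on the equation $\sum_{\ell=1}^m b_\ell^2=t$: for $t=1$ it forces $m\ge 1$, so $K$ is not smoothly slice and $\ucp(K)\ge 1$; for $t=2$, since $2$ is not a perfect square, it forces $m\ge 2$, so $\ucp(K)\ge 2$; for $t=3$, since $3$ is not a sum of at most two integer squares, it forces $m\ge 3$, so $\ucp(K)\ge 3$. Combining with the upper bounds gives $\ucp\big(P(p,-p-2t,r)\big)=t$ for $t=1,2,3$. I expect the only delicate points to be bookkeeping rather than substance: verifying that the crossing changes realizing the upper bound are indeed positive-to-negative (so they bound $\ucp$ and not $\ucpbar$), and confirming that the inequalities $p\ge 3$, $r>p+2t$ place $K$ squarely inside the hypotheses of \cref{prop:equations}; the rest is a direct substitution into the equations of \cref{prop:equations} together with the observation that, for $t\le 3$, $t$ is not a sum of fewer than $t$ integer squares.
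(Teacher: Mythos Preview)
Your proposal is correct and follows essentially the same argument as the paper: the upper bound via crossing changes to the ribbon knot $P(-p,p,r)$, the signature check via \cref{prop:Bryant}, and the lower bound by feeding the hypothesis of $H$-sliceness in $\#^m\CP^2$ into \cref{prop:equations} to force $a_2=0$, $a_1=1$, and $\sum_\ell b_\ell^2=t$, then observing that $t\in\{1,2,3\}$ cannot be written as a sum of fewer than $t$ squares. The only cosmetic difference is that the paper treats the three cases separately (and cites \cref{cor:mutant-equations} rather than \cref{prop:equations}, though it notes this is unnecessary for $3$-strand pretzels), whereas you present them uniformly.
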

\begin{proof}
For any $q=1,2,3$ and $r> p+2q$, define $K(q,r)$ to be the pretzel knot $P(p, -p-2q, r)$. Observe that $q$ positive to negative crossing changes transform the standard diagram of $K(q,r)$ into a diagram for $P(p, -p, r)$, which is ribbon. So $\ucp(K(q, r))\leq q$. 
Now, observe that $K$ has two positive parameters and one negative parameter, and also that 
\[ \frac{1}{p}+\frac{1}{-p-2q} + \frac{1}{r}> \frac{1}{r}>0,\]
so by \cref{prop:Bryant}, we have $\sigma(K)=0$. We now split into cases according to the value of $q$.

\begin{case} 
$q=1$
\end{case}

In this case, we only need to show that $K$ is not slice. This is originally due to Greene-Jabuka~\cite{greene-jabuka}, but note that we can reprove their result for these knots by appealing to \cref{cor:mutant-equations}: were $K$ to be slice in $B^4$, then there would exist an integer solution $\{a_1, a_2\}$ to 
\[a_1+a_2=1 \text{ and } a_1^2p+a_2^2r=p+2,\]
which since $r>p+2$ there is not. 
\begin{case}
$q=2$.
\end{case}
We assume for a contradiction that $K(2,r)$ is $H$-slice in $\CP^2$. By \cref{cor:mutant-equations}, this implies that there is an integer solution $\{a_1, a_2, b\}$ to 
\begin{align}
a_1+a_2=1  \text{ and } 
a_1^2p +a_2^2r + 2 b^2= p+4
\end{align}
By considering the second equation and recalling that 
$r>p+4$, 
we see that $a_2=0$. The first equation then implies that $a_1=1$, and the second equation then reduces to $2b^2=4$, which is our desired contradiction since $2$ is not the square of any integer.

\begin{case}
$q=3$
\end{case}

As before, assume for a contradiction that $K(3,r)$ is $H$-slice in $\#^2 \CP^2$, and conclude by \cref{cor:mutant-equations} that there must be an integer solution $\{a_1, a_2, b_1, b_2$\} to
\begin{align}
a_1+a_2=1 \text{ and } a_1^2p+a_2^2 r+ 2b_1^2+2b_2^2= p+6
\end{align}
As before, we  determine that $a_2=0$, so $a_1=1$. We now obtain that $3$ can be written as the sum of two squares, again a contradiction. 
\end{proof}
Since the knots of \cref{cor:precise3strand} have signature equal to zero and genus equal to one, they have vanishing Tristram-Levine signature function. Therefore, the bounds on $\ucp$ coming from~\cite{CN} are not able to prove this result. We also speculate that $\ucp(P(p, -p-2k, r))=k$, whenever $k\geq 0$ and for odd integers $p$ and $r$, with $p\geq 3$ and $r>p+2k$. See~\cref{prob:genus-one}. 

As mentioned in the introduction, we find knots $K$ for which we know exactly for which values of $m, n$ the knot $K$ is slice in $m \CP^2 \# n \overline{\CP}^2$ (cf.\ \cref{prob:indefinite}). 

\begin{corollary} \label{cor:indefinite}
Let $p\geq3$ be an odd integer. 
\begin{enumerate}
\item
$P(p, -p-2, p+4)$ is slice in $m \CP^2 \# n \overline{\CP}^2$ if and only if $(m,n) \neq (0,0)$. 
\item $P(p, -p-4, p+6)$ is slice in $m \CP^2 \# n \overline{\CP}^2$ if and only if $(m,n) \notin \{(0,0), (1,0)\}$. 
\item $P(p, -p-6, p+8)$ is slice in $m \CP^2 \# n \overline{\CP}^2$ if and only if $(m,n) \notin \{(0,0), (1,0), (2,0)\}$. 
\end{enumerate}
\end{corollary}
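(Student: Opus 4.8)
The plan is to treat the three statements uniformly by writing $K_q := P(p, -p-2q, p+2q+2)$, so that the knots appearing in parts (1), (2), (3) are $K_1$, $K_2$, $K_3$ respectively, and to determine the set $X(K_q)$ of \cref{prob:indefinite}. The key structural point, noted just before \cref{prob:indefinite}, is that $X(K)$ is upward closed: if $(m,n)\in X(K)$ then $(m+1,n),(m,n+1)\in X(K)$, because a null-homologous slice disk in $W$ remains one in $W\#\CP^2$ and in $W\#\ol{\CP^2}$. Consequently $X(K_q)$ is determined once we know (i) its $n=0$ column, namely $\{(m,0): m\geq \ucp(K_q)\}$, and (ii) whether $(0,1)\in X(K_q)$, equivalently whether $\ucpbar(K_q)\leq 1$; if (ii) holds, upward closedness puts every $(m,n)$ with $n\geq 1$ into $X(K_q)$.

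Input (i) is exactly \cref{cor:precise3strand}: since $p+2q+2>p+2q$, its three parts give $\ucp(K_1)=1$, $\ucp(K_2)=2$, and $\ucp(K_3)=3$, so that $(j,0)\notin X(K_q)$ for $0\leq j<q$ (in particular $K_q$ is not slice) while $(q,0)\in X(K_q)$. For input (ii) I would pass to mirrors: $\ol{K_q}=P(-p,\,p+2q,\,-p-2q-2)$, and, exactly as in the proof of \cref{cor:precise3strand}, a single positive-to-negative crossing change increases a negative pretzel parameter towards zero by $2$. Applying one such change to the band labelled $-p-2q-2$ produces $P(-p,\,p+2q,\,-p-2q)$; two of its parameters are negatives of one another, so this knot has the form $P(a,-a,b)$ and is ribbon (as used in the proof of \cref{cor:3strandposslice}). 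Hence $\ol{K_q}$ is obtained from a ribbon knot by a single positive-to-negative crossing change, so $\ucp(\ol{K_q})\leq 1$ by the discussion in \cref{sec:background}, i.e.\ $\ucpbar(K_q)\leq 1$ and $(0,1)\in X(K_q)$.

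Assembling (i) and (ii) with upward closedness yields
\[
X(K_q)=\bigl(\Z_{\geq 0}\times \Z_{\geq 0}\bigr)\smallsetminus\{(0,0),(1,0),\dots,(q-1,0)\},
\]
which is precisely the set claimed for $q=1,2,3$. I do not anticipate a genuine obstacle: all the real content, namely the lower bounds $\ucp(K_q)\geq q$, is already supplied by \cref{cor:precise3strand} (hence ultimately by \cref{thm:beast} and Donaldson's theorem), and the only new verification is the easy bound $\ucpbar(K_q)\leq 1$ together with the upward-closedness bookkeeping. The one thing to watch is that the pretzel-parameter and crossing-change sign conventions stay consistent with those fixed in the proofs of \cref{cor:precise3strand} and \cref{cor:3strandposslice}.
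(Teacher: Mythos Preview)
Your proof is correct and follows essentially the same route as the paper: both use \cref{cor:precise3strand} for the $n=0$ column and the single negative-to-positive crossing change $P(p,-p-2q,p+2q+2)\rightsquigarrow P(p,-p-2q,p+2q)$ (equivalently, your mirror version) to put $(0,1)$ into $X(K_q)$, then fill in the rest by upward closedness. The only difference is that the paper also invokes the Livingston result that every genus-one knot is $H$-slice in $\CP^2\#\ol{\CP^2}$ to place $(1,1)$ in $X(K_q)$; as you implicitly note, this is redundant once $(0,1)\in X(K_q)$ and upward closedness are in hand, so your argument is marginally more streamlined.
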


\begin{proof}
In each case, the knot can be changed to a ribbon knot by a negative to positive crossing change, and is therefore $H$-slice in $\ol{\CP^2}$. Now apply~\cref{cor:precise3strand} and the result of \cites{livingston-slicing, livingston-nullhomologous}, that every knot of genus one can be converted to the unknot by two generalized crossing changes (one positive to negative and one negative to positive).
\end{proof}
For other families of pretzel knots, for example $K=P(p, -p-4, p+8)$ for some $p\geq 3$, 
we have almost complete knowledge of when $K$ is slice in $m \CP^2 \# n \overline{\CP}^2$; here the only outstanding question is whether this knot is slice in $\overline{\CP}^2$ (we suspect not but have no way to prove it). In particular,  this knot has signature equal to zero and hence,  since its genus is one,  the Tristram-Levine signature function vanishes. 

We finish this section by proving a version of \cref{prop:connectedsumsoftwist} for pretzel knots using \cref{prop:equations}.
\begin{corollary}\label{cor:bigstrand}
Let $p\geq 3$ be an odd integer. Then for any $k>0$ the $(2k+1)$-strand pretzel knot $K:=P(p, -p-2, p, -p-2, \dots, p)$ has $\ucp(K)=k$.
\end{corollary}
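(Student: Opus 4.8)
The plan is to prove the two inequalities $\ucp(K)\le k$ and $\ucp(K)\ge k$ separately; the first is short, and the second is an application of \cref{prop:equations}. For the upper bound I would argue exactly as in the proof of \cref{cor:precise3strand}: performing one positive-to-negative crossing change inside each of the $k$ twist regions carrying $-p-2$ half-twists turns the standard diagram of $K$ into a diagram for the pretzel knot $P(\underbrace{p,-p,p,-p,\dots,p}_{2k+1})$. This knot has the form $P(a_1,-a_1,a_2,-a_2,\dots,a_k,-a_k,c)$ with all $a_i=c=p$, and such a pretzel knot is ribbon --- it bounds an evident ribbon disk obtained by cancelling each adjacent pair of bands $(a_i,-a_i)$, generalizing the ribbon knot $P(-p,p,p)$ of \cref{cor:3strandposslice}(3)(a). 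Hence $\ucp(K)\le u_s^+(K)\le k$, and I expect this step to be routine.

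For the lower bound, suppose $K$ is $H$-slice in $\#^m\CP^2$; the goal is $m\ge k$. Write the parameters as $p_1=\dots=p_{k+1}=p$ and $q_1=\dots=q_k=-p-2$: all parameters are odd with $p_i=p\ge 3$ and $q_j=-p-2\le -5$, and \cref{prop:Bryant} gives $\sigma(K)=1-\sgn\big(\tfrac{k+1}{p}-\tfrac{k}{p+2}\big)=1-\sgn\big(\tfrac{p+2k+2}{p(p+2)}\big)=0$. Since $K$ is a reordering, hence a mutant, of $P(\underbrace{p,\dots,p}_{k+1},\underbrace{-p-2,\dots,-p-2}_{k})$, \cref{cor:mutant-equations} applies and produces integers $a_i^j$ (for $1\le i\le k+1$, $1\le j\le k$) and $b_\ell^j$ (for $1\le\ell\le m$, $1\le j\le k$) with $\sum_{i=1}^{k+1}a_i^j=1$ and $p\sum_{i=1}^{k+1}(a_i^j)^2+2\sum_{\ell=1}^m(b_\ell^j)^2=p+2$ for every $j$, and $p\sum_{i=1}^{k+1}a_i^ja_i^{j'}+2\sum_{\ell=1}^m b_\ell^jb_\ell^{j'}=0$ whenever $j\ne j'$.

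The rest is pure arithmetic, driven by the parity of $p$. From $\sum_i a_i^j=1$ we have $\sum_i(a_i^j)^2\ge 1$, and $\sum_i(a_i^j)^2\ge 2$ would force $p\sum_i(a_i^j)^2\ge 2p>p+2$; hence $\sum_i(a_i^j)^2=1$, so exactly one $a_i^j$ is nonzero and it equals $+1$. Substituting back gives $\sum_\ell(b_\ell^j)^2=1$, so each vector $\mathbf b^j:=(b_1^j,\dots,b_m^j)$ is $\pm$ a standard basis vector of $\Z^m$. For $j\ne j'$ the cross-term relation reads $2\langle\mathbf b^j,\mathbf b^{j'}\rangle=-p\,\varepsilon_{jj'}$, where $\varepsilon_{jj'}\in\{0,1\}$ records whether the nonzero entries of $(a_i^j)_i$ and $(a_i^{j'})_i$ occupy the same slot; since $p$ is odd, $-p\,\varepsilon_{jj'}$ is even only when $\varepsilon_{jj'}=0$, forcing $\langle\mathbf b^j,\mathbf b^{j'}\rangle=0$. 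Thus $\mathbf b^1,\dots,\mathbf b^k$ is an orthonormal family in $\Z^m\subseteq\R^m$, whence $m\ge k$. Combined with the upper bound, this gives $\ucp(K)=k$.

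The only point I expect to require real care is verifying the hypotheses of \cref{prop:equations}/\cref{cor:mutant-equations} --- namely that $\sigma(K)=0$ (immediate from \cref{prop:Bryant}) and that reordering the pretzel parameters into the grouped form is legitimate, which it is precisely because \cref{cor:mutant-equations} is stated for mutants of $P(p_1,\dots,p_{k+1},q_1,\dots,q_k)$. After that, as in \cref{cor:precise3strand}, the oddness of $p$ does all the work and no deeper input is needed; the one genuine loose end is the ribbon claim for $P(p,-p,\dots,p)$ used in the upper bound, which I would pin down with an explicit band-move picture.
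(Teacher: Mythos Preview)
Your proposal is correct and follows essentially the same route as the paper: the upper bound via $k$ crossing changes to the ribbon knot $P(p,-p,\dots,p)$, and the lower bound by feeding the reordered parameters into \cref{cor:mutant-equations}, forcing $\sum_i(a_i^j)^2=1$, then $\sum_\ell(b_\ell^j)^2=1$, and finally using the cross-term relation to make the $\mathbf b^j$ pairwise orthogonal. The only cosmetic difference is that you invoke the parity of $p$ to rule out $\varepsilon_{jj'}=1$, whereas the paper reaches the same conclusion by noting that $|2\sum_\ell b_\ell^jb_\ell^{j'}|\le 2<p$; both work, and your explicit verification of $\sigma(K)=0$ via \cref{prop:Bryant} is a detail the paper leaves implicit.
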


\begin{proof}
The knot $K$ can be changed to the ribbon knot $P(p,-p,p,-p,\dots,p)$ by $k$ positive to negative crossing changes. Therefore, $\ucp(K)\leq k$. 

Assume $K$ is $H$-slice in $\#^m\CP^2$. We will show that $m\geq k$. In \cref{cor:mutant-equations},
we set $p_1 = \cdots = p_{k+1} =p$ and $q_1 = \cdots = q_k = -p-2$. Observe that for each $j$, Condition~\eqref{item:mutant-pretzel-1} together with $|p_i|=|q_j|-2$ implies that $|a_i^j|\leq 1$ for all $i$ and indeed $\{a_1^j, \cdots, a_{k+1}^j\} = \{1,0, \cdots, 0\}$ so that the first equation, $\sum_{i=1}^{k+1} a_i^j=1$, is satisifed. We also have 
\begin{equation*}
2=2 \sum_{\ell=1}^{m} (b_{\ell}^j)^2.
\end{equation*}
Thus, $\{b_1^j,b_2^j, \cdots, b_{m}^j\}=\{\pm 1, 0, \cdots, 0\}$. Given this, we see that for any $j,j'$ we must have
\begin{equation*}
\big(\sum_{i=1}^{k+1} a_i^ja_i^{j'}\big)p =0 
\end{equation*}
if there is to be any solution to \eqref{item:mutant-pretzel-2} from \cref{cor:mutant-equations}. 
\begin{equation}\label{b}
0 =\sum_{\ell=1}^{m} b_{\ell}^j  b_{\ell}^{j'}, \;\; 1 \leq j <j' \leq k. 
\end{equation}
We saw above that the set $\{b_1^j, \cdots, b_{m}^j\}=\{\pm 1, 0, \cdots, 0\}$. 
For each $j$, let the unique index $n$ such that $b_n^j \neq 0$ be called $n(j)$. By \eqref{b}, $n(j) \neq n(j')$, for all $j \neq j'$, and $\{n(j)\}_j \in \{1,2,\dots, m\}$ where $j \in \{1,\cdots, k\}$. So we need an injective map $\{1, \cdots, k\} \hookrightarrow \{1,2,\dots, m\}$. By the pigeonhole principle we need $k \leq m$, as claimed.
\end{proof}

\begin{remark}
A similar proof shows that for any odd $p\geq 3$, the $(2k+1)$-strand pretzel knot $K:=P(p, -p-2, \dots, p, -p-2, r)$ has $\ucpbar(K)=k$, whenever $r>p+2$. 
\end{remark}

\begin{remark} For the pretzel knots whose $\CP^2$-slicing numbers we compute, we find a minimal sequence of crossing changes such that each intermediate knot is again a pretzel. This feature of our result can be stated in terms the concordance knot graph, whose vertices are concordance classes of knots and whose edges represent positive to negative crossing changes~\cite{jabuka2019knot}*{Definition~1.1}.  We find that, for these knots, there is a minimal length path from the concordance class of $K$ to the class of the unknot, such that every vertex on the path is represented by a pretzel knot. To see this, assume that the concordance graph admitted a shorter path from the class of $K$ to the class of the unknot. This would imply that $\ucp(K)$ is lower than computed, since $K$ would be transformed into a slice knot by too short a sequence of crossing changes and concordances.
\end{remark} 

\section{\texorpdfstring{Comparing the topological and smooth $\CP^2$-slicing numbers}{Comparing the topological and smooth CP2-slicing numbers}}\label{sec:top-v-smooth}
We first give the proof of \cref{thm:upperbound}, which we restate below.  

\begin{reptheorem}{thm:upperbound}
Let $K\subseteq S^3$ be a knot with Seifert matrix~$A$, with respect to some Seifert surface~$F$ and choice of generators~$\alpha_1, \dots, \alpha_{2g}$ for $H_1(F;\Z)$, where $g$ is the genus of $F$.

If there exists an integral $2g \times 2g$ matrix $B$ with $\det (tB-B^T)=\pm t^k$, for some $k$, and integers $\{c_{i,j}\}$ for $i=1,\cdots, n$ and $j=1,\dots, 2g$, such that $A$ can be decomposed as the difference 
\begin{align*}
A_{2g\times 2g}= B_{2g\times 2g}
- \sum_{i=1}^n \left[
 \begin{array}{cccc}
c_{i,1}^2 & c_{i,1}c_{i,2} & \dots &c_{i,1} c_{i,2g} \\
c_{i,1} c_{i,2}  & c_{i,2}^2 & \ddots & c_{i,2} c_{i,2g}\\
\vdots & \ddots & \ddots & \vdots \\
c_{i,1} c_{i,2g} & c_{i,2} c_{i,2g} & \dots & c_{i,2g}^2
\end{array}
\right],
\end{align*}
then $\ucp^\top(K) \leq n$. 
\end{reptheorem}

\begin{proof}
Our goal is to show that $K$ is obtained from the topologically slice $K_{\gamma}$ by adding $n$ generalized positive crossings. We do this by showing that we can obtain a topologically slice knot from $K$ by adding $n$ generalized negative crossings (see~\cref{rem:negging}). 

First we find a link $\gamma:=(\gamma_1 \sqcup \cdots \sqcup \gamma_n) \subseteq S^3 \smallsetminus \nu(K)$, such that $\gamma\subseteq S^3$ is the unlink and such that $\text{lk}(\gamma_i,  \alpha_j)= c_{i,j}$ for $i=1,\dots, n$ and $j=1, \dots,  2g$. This step is straightforward: choose $\gamma$ to lie in $S^3 \smallsetminus \nu(F)$, as illustrated in~\cref{fig:gamma}, so that the desired linking number condition is satisfied. 
Note that since we are computing (signed) linking numbers we need $\gamma$ to be oriented. Given an explicit diagram for $F$ as a suface drawn in disk-band form, we directly arrange for $\gamma\subseteq S^3$ to be trivial, e.g.\ by requiring that $\gamma_j$ always passes above $\gamma_i$, for $j>i$. 

\begin{figure}[htb]
\centering
\begin{tikzpicture}
\node[anchor=south west,inner sep=0] at (0,0){\includegraphics[width=7cm]{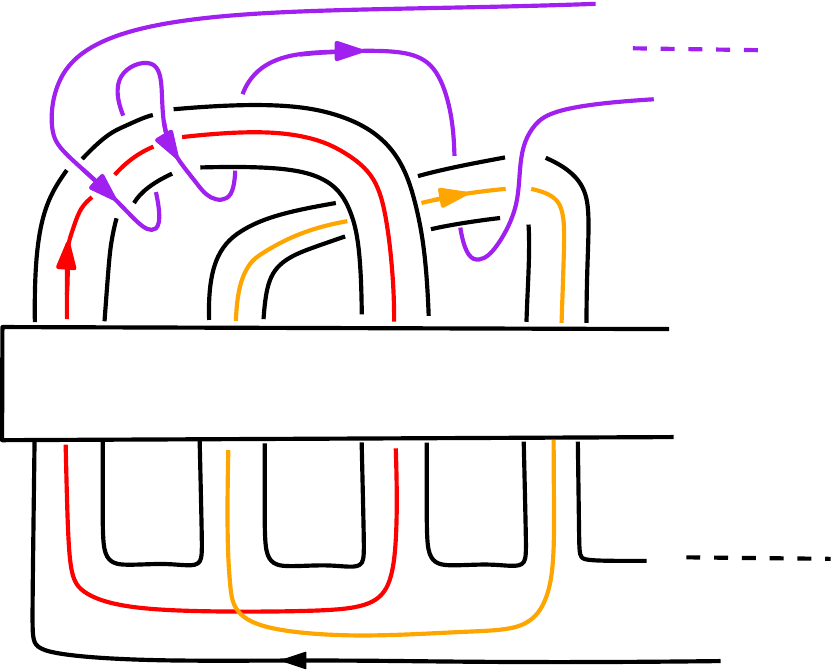}};
\node at (6,-0.25) {$K$};
\node at (5.75,0.35) {$F$};
\node at (2.5, 5.75) {\pink{$\gamma_1$}};
\node at (0.55,0.45) {\red{$\alpha_1$}};
\node at (4.85,0.45) {\orange{$\alpha_2$}};
\node at (3,2.35) {twisting/linking among bands};
\end{tikzpicture}
\caption{Proof of \cref{thm:upperbound}. The link $\gamma$ is constructed in the complement of a Seifert surface $F$ for $K$. The basis elements $\alpha_1$ and $\alpha_2$ for $H_1(F,\Z)$ are shown. In this case $c_{1,1}=2$ and $c_{1,2}=-1$. We only show one component $\gamma_1$ of~$\gamma$. }\label{fig:gamma}
\end{figure}

Let $K_\gamma$ denote the image of $K$ in the copy of $S^3$ produced by performing simultaneous $+1$-framed Dehn surgery along every $\gamma_i$. We will show next that $K_\gamma\subseteq S^3$ is topologically slice in $B^4$. Since $\gamma$ is disjoint from $F$,  under the $+1$ twists we obtain a Seifert surface for $K_{\gamma}$ in the new copy of $S^3$, which we will call $F_{\gamma}$. Moreover, we claim that the Seifert form on $F_{\gamma}$ corresponding to the generators of $H_1(F_{\gamma};\Z)$ given by the images of $\alpha_1, \dots,  \alpha_{2g}$ is exactly 
\begin{align*}
A+ \sum_{i=1}^n \left[
 \begin{array}{cccc}
c_{i,1}^2 & c_{i,1}c_{i,2} & \dots &c_{i,1} c_{i,2g} \\
c_{i,1} c_{i,2}  & c_{i,2}^2 & \ddots & c_{i,2} c_{i,2g}\\
\vdots & \ddots & \ddots & \vdots \\
c_{i,1} c_{i,2g} & c_{i,2} c_{i,2g} & \dots & c_{i,2g}^2
\end{array}
\right]= B_{2g \times 2g}.
\end{align*}
\begin{figure}[htb]
\centering
\begin{tikzpicture}
\node[anchor=south west,inner sep=0] at (0,0){\includegraphics[width=15cm]{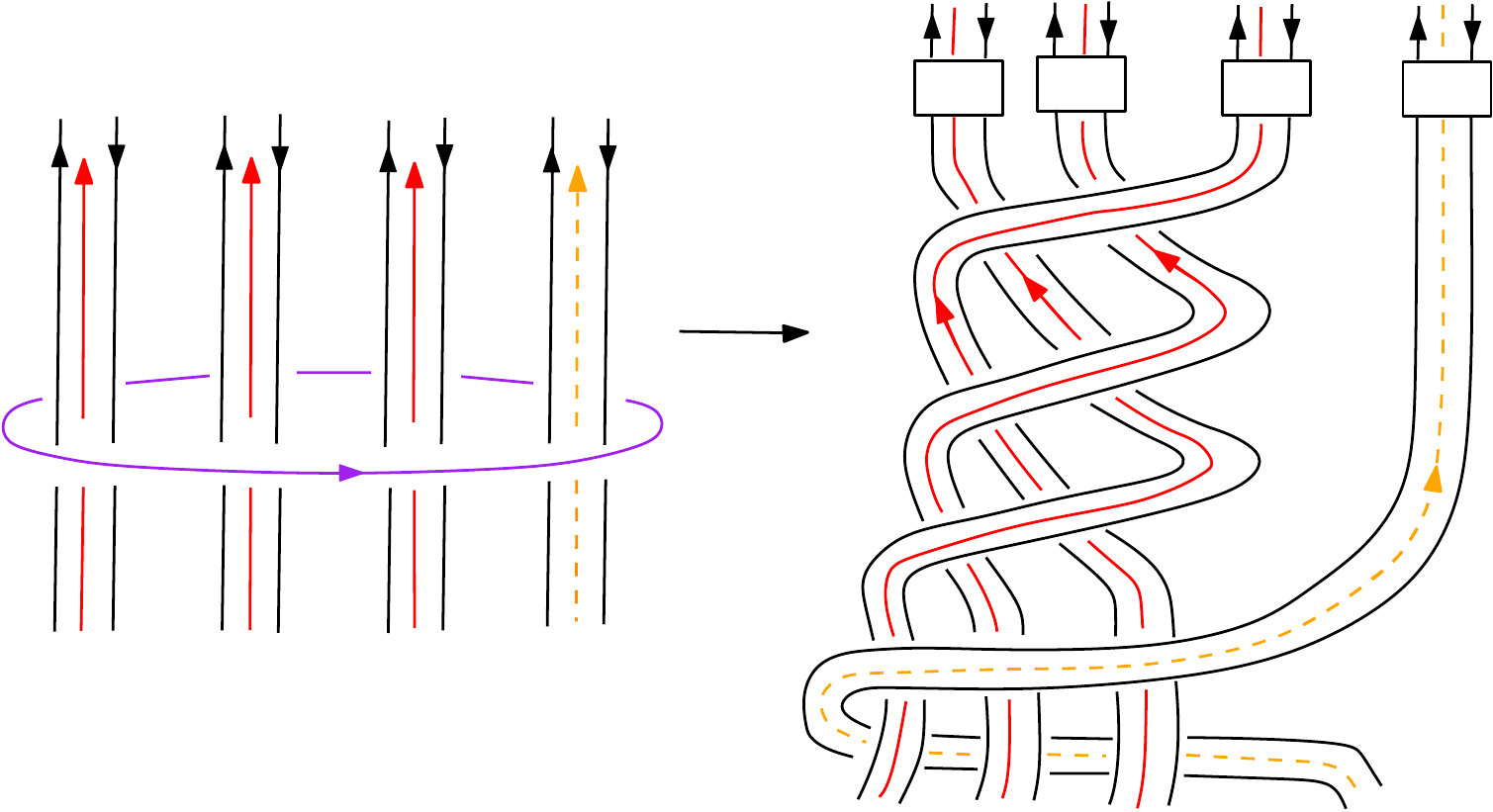}};
\node at (6.9,7) {$K=\partial F$};
\node at (-0.2,3.75) {\pink{$\gamma_1$}};
\node at (0.9,1.6) {\red{$\alpha_1$}};
\node at (5.75,1.6) {\orange{$\alpha_2$}};
\node at (7,3.75) {$-1$};
\node at (9.65,7.25) {$+1$};
\node at (10.85,7.3) {$+1$};
\node at (12.75,7.25) {$+1$};
\node at (14.5,7.25) {$+1$};
\node at (14,1.45) {$K_\gamma=\partial F_\gamma$};
\node at (8.9,-0.15) {\red{$\wt{\alpha}_1$}};
\node at (13.8,-0.15) {\orange{$\wt{\alpha}_2$}};
\end{tikzpicture}
\caption{A portion of the knot $K$ with 
a portion of 
its Seifert surface $F$ is shown on the left in black. A component $\gamma_1\subseteq \gamma$ is shown in purple, as well as basis elements $\alpha_1,\alpha_2$ for $H_1(F;\Z)$, in red and orange respectively. The specific case of $c_{1,1}=3$ and $c_{1,2}=1$ is shown. On the right we have the knot $K_\gamma$ with 
a portion of 
 its Seifert surface $F_\gamma$, in black. The images of $\alpha_1$ and $\alpha_2$ are shown, denoted by $\wt{\alpha}_1$ and $\wt\alpha_2$, again in red and orange. 
Note that the self linking of $\alpha_1$ and $\alpha_2$ has increased by $9$ and $1$ respectively. Similarly, $\lk(\wt\alpha_1,\wt\alpha_2)=\lk(\alpha_1,\alpha_2)+3$.}\label{fig:twisting}
\end{figure}
One may see this directly from looking at a diagram, such as in~\cref{fig:twisting}. Alternatively, this also follows from~\cite{hoste-casson}*{Lemma~1.1}, which provides an explicit formula to compute linking numbers of curves in an integer homology $3$-sphere obtained by performing Dehn surgery on some link. Our setup is a particular simple case of this situation. Briefly, for us the Dehn surgery is performed on an unlink and the linking framing matrix is the negative of the identity matrix. Let $Y$ denote the copy of $S^3$ obtained as the result of Dehn surgery, and let $\wt{\alpha}_1,\cdots,\wt{\alpha}_{2g}$ denote the images of the curves $\alpha_1, \dots, \alpha_{2g}$ in $Y$. Then the formula from~\cite{hoste-casson}*{Lemma~1.1} gives 
\[
\lk_Y(\alpha_j,\alpha_k^+)=\lk_{S^3}(\alpha_j,\alpha_k^+)+\left[ \begin{array}{cccc}
c_{1,j}	&c_{2,j}	&\dots 	&c_{n,j}
\end{array}\right]\cdot
\left[
\begin{array}{c}
c_{1,k}\\
c_{2,k}\\
\vdots\\
c_{n,k}
\end{array}
\right],
\]
as needed. Since $B$ is the Seifert matrix for $K_\gamma$, we compute that $\Delta_{K_{\gamma}}(t)= \det\left(tB-B^T \right) = \pm t^k$ for some $k$ by hypothesis. In other words, $K_\gamma\subseteq S^3$ has trivial Alexander polynomial, and hence is topologically slice~\cite{FQ}*{Theorem~11.7B}. In summary, $K$ is obtained from the topologically slice knot $K_\gamma$ by adding $n$ generalized positive crossings, and therefore $\ucp^\top(K)\leq n$, as claimed.
\end{proof}

We now apply \cref{thm:upperbound} to find knots with nontrivial, finite, and distinct $\ucp$ and $\ucp^\top$ as mentioned in \cref{sec:introduction}. 

\begin{proposition}\label{prop:ucp2topis1}
For any odd $p \geq 3$ and any $k \geq 1$,  the pretzel knot $K_{p,k}=P(p,-p-2k, 3p+8k-2)$ has $\ucp^{\top}(K_{p,k})=1$. 
\end{proposition}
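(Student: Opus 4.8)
The plan is to prove the two inequalities $\ucp^\top(K_{p,k})\le 1$ and $\ucp^\top(K_{p,k})\ge 1$. For the first I would apply \cref{thm:upperbound} with $n=1$. Write $q:=-p-2k$ and $r:=3p+8k-2$, so $K_{p,k}=P(p,q,r)$, and take $F$ to be the standard genus-one Seifert surface of this pretzel knot; with respect to its usual basis $\alpha_1,\alpha_2$ of $H_1(F;\Z)$ the Seifert matrix is
\[
A=\begin{pmatrix}\tfrac{p+q}{2} & \tfrac{q-1}{2}\\[1ex]\tfrac{q+1}{2} & \tfrac{q+r}{2}\end{pmatrix},
\]
which is integral because $p,q,r$ are odd and satisfies $A-A^T=\begin{pmatrix}0&-1\\1&0\end{pmatrix}$. (A routine check with $A+A^T$ confirms that $F$ computes the correct Alexander polynomial and that $\sigma(K_{p,k})=0$, compatibly with \cref{prop:Bryant}.) The decisive point is that $r=3p+8k-2$ has been chosen so that $p+4q+r=-2$; since the sum of the four entries of $A$ equals $\tfrac12(p+4q+r)$, this is precisely the statement that the all-ones vector $v:=(1,1)^T$ satisfies $v^TAv=-1$.

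Given this, set $c:=Av\in\Z^2$ (explicitly $c=\tfrac12(-p-4k-1,\;p+4k-1)$, which is integral since $p$ is odd), and $B:=A+cc^T$, where $cc^T=\begin{pmatrix}c_1^2 & c_1c_2\\ c_1c_2 & c_2^2\end{pmatrix}$. Then $Bv=Av+c\,(c\cdot v)=Av+(Av)(v^TAv)=Av-Av=0$, so $\det B=0$; and since $B-B^T=A-A^T=\begin{pmatrix}0&-1\\1&0\end{pmatrix}$, the quadratic-in-$t$ identity for a genus-one Seifert matrix gives $\det(tB-B^T)=(\det B)t^2+(1-2\det B)t+\det B=t$. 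As $A=B-cc^T$, the hypothesis of \cref{thm:upperbound} is met with $n=1$, so $\ucp^\top(K_{p,k})\le 1$.

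For the reverse inequality it is enough to know that $K_{p,k}$ is not topologically slice, i.e.\ that $\ucp^\top(K_{p,k})\ne 0$. By the Fox--Milnor condition a topologically slice knot has determinant a perfect square, whereas $\det K_{p,k}=|pq+qr+rp|=(p+4k)^2-4k$, and since $0<4k<2(p+4k)-1$ this value lies strictly between $(p+4k-1)^2$ and $(p+4k)^2$, hence is not a square. Therefore $\ucp^\top(K_{p,k})\ge 1$, and combined with the upper bound $\ucp^\top(K_{p,k})=1$.

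The only place that needs genuine care — rather than a real difficulty — is meeting the somewhat technical hypothesis of \cref{thm:upperbound}: one must exhibit a single rank-one perturbation $cc^T$ of a Seifert matrix of $K_{p,k}$ whose result has unit Alexander polynomial. The computation above shows this is possible precisely because of the arithmetic identity $p+4(-p-2k)+(3p+8k-2)=-2$, which is exactly what forces $A+(Av)(Av)^T$ to be singular for the all-ones $v$; the remaining claim — that a singular $2\times2$ integer matrix $B$ with $B-B^T=\begin{pmatrix}0&-1\\1&0\end{pmatrix}$ has $\det(tB-B^T)$ equal to a unit — is immediate from the formula above. I would also double-check the normalization and sign conventions for the pretzel Seifert matrix $A$ against those used in the paper, but the sign ambiguity in $A-A^T$ does not affect any of the conclusions.
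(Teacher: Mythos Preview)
Your proof is correct. The upper bound argument is essentially the paper's: both apply \cref{thm:upperbound} by exploiting the arithmetic identity $p+4(-p-2k)+(3p+8k-2)=-2$, which makes the curve $\alpha_1+\alpha_2$ on the standard Seifert surface have self-linking $-1$. The paper presents this by explicitly changing basis to $\{x+y,y\}$ and writing down the rank-one correction; you package the same computation more abstractly by taking $c=Av$ for $v=(1,1)^T$ and noting $B=A+cc^T$ is singular. (Your Seifert matrix is the paper's with the two basis vectors swapped, which is harmless.)

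Where you differ is in the lower bound. The paper observes $\det(K_{p,k})>1$ and then invokes an external classification result (Theorem~1.5 of \cite{miller-pretzel}) to conclude non-sliceness. You instead use the Fox--Milnor condition directly: $\det(K_{p,k})=(p+4k)^2-4k$ lies strictly between consecutive squares, so cannot be a square. This is more elementary and self-contained; indeed the paper makes this very non-square observation later, in the proof of \cref{thm:top-smooth-distinct}, so your shortcut is entirely in the spirit of the paper.
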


\begin{proof}
First, observe that the determinant of $K_{p,k}$ equals
\[ \det(K_{p,k})=| p(-p-2k)+p(3p+8k-2) + (-p-2k)(3p+8k-2)| = |(4k+p)^2-4k|,\]
which is strictly larger than 1.  Therefore,   since our conditions also imply that the twist parameters of $K_{p,k}$ are both larger than 1 in absolute value and  not of the form $\{a, -a, b\}$ for some integers $a$ and $b$,  Theorem 1.5  from~\cite{miller-pretzel} implies that $K_{p,k}$ is not topologically slice and hence that  $\ucp^\top(K_{p,k}) \geq 1$. 

The standard Seifert surface $F$ for $K$ with respect to the basis $\{x,y\}$ shown in~\cref{fig:pretzel-basis} has Seifert matrix  
\[\left[ \begin{array}{cc} p+3k-1 & \frac{-p-1}{2}-k \\ \frac{-p+1}{2}-k & -k \end{array} \right]. \]
\begin{figure}[htb]
\centering
\begin{tikzpicture}
\node[anchor=south west,inner sep=0] at (0,0){\includegraphics[width=6cm]{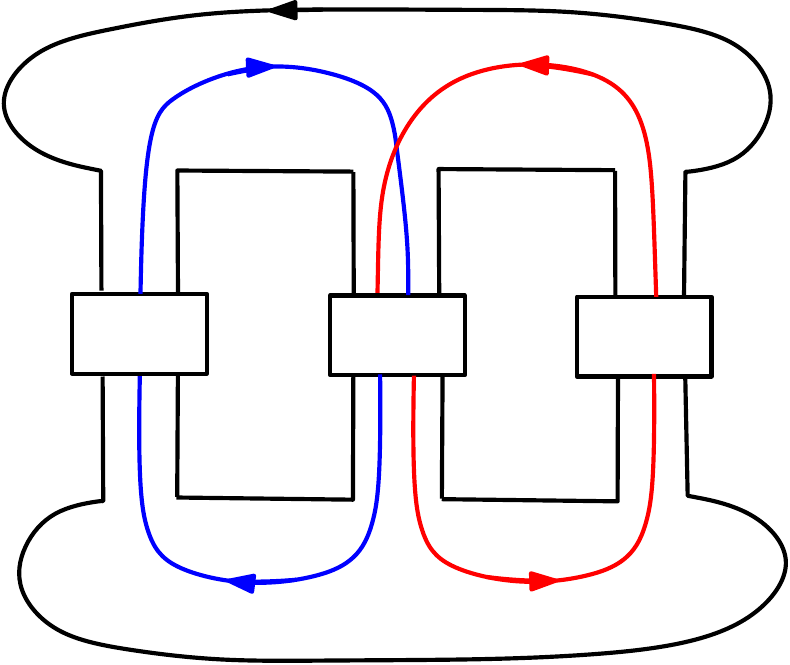}};
\node at (1,2.5) {$a$};
\node at (3,2.5) {$b$};
\node at (4.9,2.5) {$c$};
\node at (3,-0.25) {$P(a,b,c)$};
\node at (5.1,4.3) {\red{$x$}};
\node at (1,4.3) {\blue{$y$}};
\end{tikzpicture}
\caption{The $3$-strand pretzel knot $P(a,b,c)$, for $a,b,c$ odd integers, with a Seifert surface $F$. A basis $\{x,y\}$ for $H_1(F;\Z)$ is shown. The numbers in the boxes correspond to half-twists. For our examples,  $a=p$,  $b=-p-2k$, and $c=3p+8k-2$.}\label{fig:pretzel-basis}
\end{figure}
The change of basis $\{x,y\}\to \{x+y,y\}$ yields the Seifert matrix 
\[\left[ \begin{array}{cc} -1 & \frac{-p-1}{2}-2k \\ \frac{-p+1}{2}-2k & -k \end{array} \right]
= \left[ \begin{array}{cc} 0 & 0 \\ 1 & * \end{array} \right]
- \left[ \begin{array}{cc}(1)^2& 1 \cdot (\frac{p+1}{2}+2k)  \\1 \cdot (\frac{p+1}{2}+2k)   & (\frac{p+1}{2}+2k)^2 \end{array} \right],
 \]
where $*= (-k)+ (\frac{p+1}{2}+2k)^2$.  Therefore, by Theorem~\ref{thm:upperbound},  we have that $\ucp^{\top}(K_{p,k}) \leq 1$ as desired. 
\end{proof} 

As an application of the above proposition, we obtain the following corollary. 

\begin{repcorollary}{thm:top-smooth-distinct}
Let $p\geq 3$ be odd.
\leavevmode
\begin{enumerate}
\item The pretzel knot $K_{p,2}:=P(p, -p-4, 3p+14)$ has $\ucp^\top(K_p)=1$ and $\ucp(K)=2$. Moreover, the elements of $\{K_{p,2}\}_p$ are distinct in topological concordance.  
\item The pretzel knot $K_{p,3}:=P(p, -p-6, 3p+22)$ has $\ucp^\top(K_p)=1$ and $\ucp(K)=3$. Moreover, the elements of $\{K_{p,3}\}_p$ are distinct in topological concordance.   
\end{enumerate}
\end{repcorollary}

\begin{proof}
The computation of $\ucp^\top$ comes from \cref{prop:ucp2topis1}, while that of $\ucp$ comes from \cref{cor:precise3strand}. 

Recall that the Alexander polynomial 
\[
\Delta_{P(p,q,r)}(t)=\frac{(pq+qr+pr)(t-2+t^{-1})+(t+2+t^{-1})}{4},\]
for odd integers $p,q,r$. Since $K(p,k)=P(p,-p-2k,3p+8k-2)$, we therefore have that
\begin{equation}\label{eq:alex-poly-pretzel}
\Delta_{K(p,k)}(t)=\frac{(4k-(4k+p)^2)(t-2+t^{-1})+(t+2+t^{-1})}{4}.
\end{equation}
So $\det{K_{p,k}}= (4k+p)^2-4k$. Suppose that  $(4k+p)^2-4k=n^2$ for some $n\geq1$. Then 
\[
(4k+p+n)(4k+p-n)=4k.
\] 
Here $4k+p+n$ and $4k$ are strictly positive integers, so $4k+p-n\geq 1$ as well. But then the left hand side is strictly larger than $4k$, which is a contradiction. So $\det(K_{p,k})$ is not a square, and hence $\Delta_{K_{p,k}}(t)$ does not factor as $f(t)f(t^{-1})$ for any $f(t) \in \mathbb{Z}[t^{\pm1}]$.  Since $\Delta_{K_{p,k}}(t)$ is a symmetric degree two polynomial,  it must further be irreducible. 
Moreover, from~\eqref{eq:alex-poly-pretzel} it is clear that for a fixed $p$, we have $\Delta_{K_{p,k}}(t)=\Delta_{K_{p,k'}}(t)$ if and only if $k=k'$.  

Therefore, for any $k \neq k'$, 
\[\Delta_{K_{p,k} \#-K_{p,k'}}(t)= \Delta_{K_{p,k}}(t) \Delta_{K_{p,k'}}(t)\]
is a product of two distinct irreducible degree two polynomials, and hence cannot factor as $g(t)g(t^{-1})$ for any $g(t) \in \mathbb{Z}[t^{\pm1}]$.  It follows that $K_{p,k} \#-K_{p,k'}$ is not even algebraically slice, and consequently $K_{p,k}$ and $K_{p,k'}$ are distinct in topological concordance.
\end{proof}

\begin{remark}
As mentioned after the proof of \cref{cor:precise3strand}, we conjecture that $\ucp(P(p, -p-2k, r))=k$, whenever $k\geq 0$ and for odd integers $p$ and $r$, with $p\geq 3$ and $r>p+2k$ (see~\cref{prob:genus-one}). If the conjecture is true, then the proof above shows that $\{K_{p,k}\}_p$ is an infinite family of knots with $\ucp^\top=1$ and $\ucp=k$ (see \cref{prob:top-smooth}).
\end{remark}

We finish this section by giving the proof of \cref{prop:genus-bound}, which we first restate.

\begin{repproposition}{prop:genus-bound}
Let $K$ be a knot with Seifert genus one. 
If $\sigma(K)=2$, then $\ucp^\top(K)= \infty$.  Otherwise, $\ucp^\top(K) \leq 4$. 
\end{repproposition}
\begin{proof}
The first assertion follows from~\cite{CHH}*{Proposition~1.2}. For the second assertion, assume that $K$ is a knot with genus one and non-positive signature. Choose some basis for $H_1(F;\Z)$, and let the corresponding Seifert matrix be 
\[A:=\left[ \begin{array}{cc} a & b+1 \\ b & c \end{array} \right].\] 
Note that if $a=b=0$ then $\Delta_K(t)=1$ and so $\ucp^\top(K)=0$, so we can assume that is not the case.

\begin{claim} For any Seifert surface $F$ for $K$ with genus one there exists a simple closed curve $\gamma$ on $F$ such that the $F$-induced framing of $\gamma$, denoted $\fr(\gamma)$, is strictly negative. 
\end{claim}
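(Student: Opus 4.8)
The plan is to reinterpret $\fr(\gamma)$ as the self-linking quadratic form on $H_1(F;\Z)$ and then read off the desired curve directly from the hypothesis $\sigma(K)\le 0$. Identify $H_1(F;\Z)$ with $\Z^2$ via the chosen basis, so that the Seifert matrix is $A$, and recall that $F$ is a once-punctured torus. For a simple closed curve $\gamma$ on $F$ representing a primitive class $v=(m,n)^T\in\Z^2$, the $F$-induced framing $\fr(\gamma)$ equals the self-linking number $\lk(\gamma,\gamma^+)$ of $\gamma$ with its positive pushoff off $F$, namely $v^TAv=\tfrac12 v^T(A+A^T)v=am^2+(2b+1)mn+cn^2=:Q(m,n)$; in particular it depends only on the homology class of $\gamma$. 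Since every primitive class on a once-punctured torus is represented by an embedded simple closed curve, it suffices to exhibit a primitive $v\in\Z^2$ with $Q(v)<0$.

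The key point is then a short signature count. The symmetrized matrix $A+A^T=\sbmatrix{2a & 2b+1 \\ 2b+1 & 2c}$ has determinant $4ac-(2b+1)^2$, which is odd and hence nonzero; thus $Q$ is a nondegenerate integral binary quadratic form of signature $\sigma(K)\in\{-2,0,2\}$. Because $\sigma(K)\le 0$, the form $Q$ is \emph{not} positive definite, so it is either negative definite or indefinite. If $Q$ is negative definite, then already $Q(1,0)=a<0$, and $\gamma$ may be taken to represent the first basis element. If $Q$ is indefinite, the set $\{v\in\R^2 : Q(v)<0\}$ is a nonempty open cone, hence contains a point of $\Q^2$; clearing denominators and dividing out a common factor yields a primitive $v\in\Z^2$ with $Q(v)<0$ (if $v=dw$ with $w$ primitive then $Q(v)=d^2Q(w)<0$ forces $Q(w)<0$), and we take $\gamma$ representing this $w$. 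In either case $\gamma$ is an embedded simple closed curve on $F$ with $\fr(\gamma)<0$.

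I expect no genuine obstacle here: this Claim is essentially a restatement of ``$\sigma(K)\ne 2$'' in the language of framings available on $F$. The only inputs are the standard identity $\fr(\gamma)=v^TAv$ for a curve representing $v$, the standard fact that primitive classes on $F$ are realized by embedded simple closed curves, and the elementary observation that non-positivity of $\sigma(K)$ rules out positive-definiteness of the symmetrized Seifert form. The single point that merits a sentence of care is why it is enough to work with \emph{primitive} classes — so that $\gamma$ is an honest simple closed curve rather than a multicurve — and this is handled by the scaling identity $Q(dw)=d^2Q(w)$ used above.
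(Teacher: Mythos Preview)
Your proof is correct and rests on the same core observation as the paper: since $\sigma(K)\le 0$, the symmetrized Seifert form $A+A^T$ is not positive definite, and hence represents a negative value on some primitive integer vector, which is then realized by a simple closed curve on the once-punctured torus $F$. Your observation that $\det(A+A^T)=4ac-(2b+1)^2$ is odd (hence nonzero) is exactly the nondegeneracy input the paper also uses.

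Where you diverge from the paper is in the \emph{execution}. The paper proceeds by explicit casework: after reducing to $c\ge 0$ and $4ac-(2b+1)^2<0$, it writes down, in a table, a specific primitive class $(x,y)$ with $\fr(x,y)<0$ in each of six cases depending on the signs of $a$ and $b$ (e.g.\ $(1,0)$ when $a<0$, and $(-2b-1,2a)$ when $a>0$, with a further reduction to a primitive multiple). Your argument replaces this casework with a single soft step: the negative cone of an indefinite real quadratic form is open and nonempty, hence contains a rational point, and scaling by $d^2$ passes negativity to a primitive integer representative. This is shorter and conceptually cleaner; the paper's version has the virtue of being fully constructive. Since the claim is only used existentially in what follows, either works.
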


\begin{proof}[Proof of claim]
If $c<0$ then the claim is proved, so assume that $c \geq 0$. Consider the matrix $A+A^T$. By definition $\det(A+A^T)=4ac-(2b+1)^2$ equals the determinant of $K$ and therefore cannot be zero. On the other hand, if $4ac-(2b+1)^2>0$ then $4ac>0$ and therefore $c>0$. Then since $\det(A+A^T)$ and $2c$ are both positive, the matrix $A+A^T$ is positive definite by Sylvester's criterion, implying that $\sigma(K)=2$, a contradiction. 
Therefore, we have that $4ac-(2b+1)^2 <0$.

In terms of our current basis for $H_1(F;\Z)\cong \Z\oplus \Z$, we have that  
\begin{equation}\label{eq:selflinking}
\fr(x,y):=\lk((x, y), (x,y)^+)= ax^2+ (2b+1) xy + cy^2.
\end{equation}
In each of the following cases, consider the homology class $(x,y)$ indicated below in~\cref{table}. The self-linking number in each case is negative, as computed using~\eqref{eq:selflinking}. 

\begin{table}[h!]
\centering
\caption{Proof of \cref{prop:genus-bound}.}\label{table}
\begin{tabular}{cccc}\toprule
Case no.\	&	&$(x,y)$	&$\fr(x,y)$\\
	\midrule
(1)	&$a<0$	&$(1,0)$	&$a$\\
(2)	&$a=0,b>0,c>0$	&$(-c,1)$	&$-2bc$\\
(3)	&$a=0,b>0, c=0$	&$(-1,1)$	&$-(2b+1)$\\	
(4)	&$a=0,b<0,c>0$	&$(2c,1)$	&$(4b+3)c$\\
(5)	&$a=0,b<0, c=0$	&$(1,1)$	&$(2b+1)$\\	
(6)	&$a>0$	&$(-2b-1,2a)$	&$a(4ac-(2b+1)^2)$\\
\bottomrule
\end{tabular}
\end{table}
In cases (1-5), the class $(x,y)$ is primitive, so choose $\gamma$ to be a simple closed curve representing it. Case (6) is more interesting. While $(-2b-1, 2a)$ may not be primitive itself, we can choose some primitive element $(x',y')\in \Z\oplus\Z$ such that $(-2b-1, 2a)=n(x',y')$ for some integer $n$. Then $\fr(-2b-1, 2a)=n^2\cdot \fr(x',y')$ and so $\fr(x',y')$ must also be negative. Then choose $\gamma$ to be a simple closed curve representing the class $(x',y')$. This completes the proof of the claim.
\end{proof}

We have now found a simple closed curve $\gamma\subseteq F$ with $\fr(\gamma)<0$. Since $[\gamma]$ is primitive, we can find some curve $\delta\subseteq F$ where the algebraic intersection $\gamma\cdot \delta=-1$ and the classes of $\gamma$ and $\delta$ form a basis for $H_1(F;\Z)$. In other words, $\{\gamma,\delta\}$ is a symplectic basis for $H_1(F;\Z)$. 
Let $B:= \left[ \begin{array}{cc} p & q+1 \\ q & r \end{array} \right]$ be the Seifert matrix of $F$ with respect to the basis $\{\gamma,\delta\}$, and note that we have that $p<0$ by construction. 

\begin{remark}
We can now easily prove that $\ucp^\top(K) \leq 5$.  Use Lagrange's four-square theorem (see e.g.\ \cite{serre-book}*{p.\ 47, Corollary~1}) to write $0\leq -p-1= k^2+ \ell^2+ m^2 + n^2$, for some integers $k,\ell, m, n$, and observe that 
\begin{align*}
\left[ \begin{array}{cc} p & q+1 \\ q & r \end{array} \right]
=\left[ \begin{array}{cc} 0 & 1 \\ 0 & v \end{array} \right]
-\left[ \begin{array}{cc} 1 & u  \\ u & u^2 \end{array} \right]
-\left[ \begin{array}{cc} k^2 & k \\ k & 1 \end{array} \right]
-\left[ \begin{array}{cc} \ell^2 & \ell \\ \ell & 1 \end{array} \right]
-\left[ \begin{array}{cc} m^2 & m \\ m & 1 \end{array} \right]
-\left[ \begin{array}{cc} n^2 & n \\ n & 1 \end{array} \right]
\end{align*}
where $u=-q-k-\ell-m-n$ and $v=r+u^2+4$. Then by~\cref{thm:upperbound}, $\ucp^\top(K)\leq 5$. But we would like to improve this bound, which we do next.
\end{remark}

\begin{claim}
There exists a simple closed curve $\alpha\subseteq F$ so that $-\fr(\alpha)-1$ can be written as a sum of three squares.
\end{claim}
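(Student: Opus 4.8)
The plan is to produce $\alpha$ inside the explicit family of simple closed curves representing the primitive classes $x\gamma+\delta$, $x\in\Z$, and to pin down the right value of $x$ by a parity/counting argument that crucially uses $p<0$. By \eqref{eq:selflinking} applied to the symplectic basis $\{\gamma,\delta\}$, the self-linking form is the binary quadratic form
\[
\fr(x\gamma+y\delta)=p x^{2}+(2q+1)xy+r y^{2}=:Q(x,y),
\]
and $p=\fr(\gamma)<0$. For every $x\in\Z$ the class $x\gamma+\delta$ is primitive, hence represented by an embedded simple closed curve $\alpha_x\subseteq F$ with $\fr(\alpha_x)=Q(x,1)=p x^{2}+(2q+1)x+r$. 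First I would record the number-theoretic input: by the three-square theorem a nonnegative integer $n$ fails to be a sum of three squares precisely when $n=4^{a}(8b+7)$, so in particular $n\equiv 1,2,3,5,6\pmod 8$ already forces $n$ to be a sum of three squares. Translating through $n=-\fr(\alpha_x)-1$, it suffices to find $x$ with $\fr(\alpha_x)<0$ and $\fr(\alpha_x)\bmod 8\notin\{0,3,7\}$.

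The heart of the argument, and the only place where sign control and congruence control have to be handled simultaneously, is the following. Since $p<0$, $\fr(\alpha_x)=p x^{2}+(2q+1)x+r\to-\infty$ as $x\to\infty$, so choose $M$ with $\fr(\alpha_x)<0$ for all $x>M$. For odd $x$ one has $x^{2}\equiv 1\pmod 8$, hence $\fr(\alpha_x)\equiv(p+r)+(2q+1)x\pmod 8$; since $2q+1$ is a unit mod $8$, as $x$ runs through the odd residues the product $(2q+1)x$ runs through all of $(\Z/8)^{\times}=\{1,3,5,7\}$. Picking representatives $x>M$, the set
\[
\{\,\fr(\alpha_x)\bmod 8 : x>M,\ x\ \text{odd}\,\}=\{\,p+r+1,\ p+r+3,\ p+r+5,\ p+r+7\,\}\pmod 8
\]
therefore consists of four distinct residues. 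These four residues cannot all lie in the three-element set $\{0,3,7\}\pmod 8$, so some odd $x_{0}>M$ has $\fr(\alpha_{x_0})<0$ and $\fr(\alpha_{x_0})\bmod 8\in\{1,2,4,5,6\}$. Then $n:=-\fr(\alpha_{x_0})-1\ge 0$ satisfies $n\equiv 1,2,3,5,6\pmod 8$, so $n$ is a sum of three squares, and $\alpha:=\alpha_{x_0}$ is the curve we want.

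The remaining points are routine and already in place: every primitive class in $H_1(F;\Z)\cong\Z^{2}$ is represented by an embedded simple closed curve on the once-punctured torus $F$, and the off-diagonal entries of the Seifert matrix $B$ differ by $1$, so the $xy$-coefficient $2q+1$ of $Q$ is genuinely odd (this is what makes multiplication by it a permutation of the units mod $8$). The main obstacle I anticipate is exactly the coupling of the two constraints in the previous paragraph — one cannot prescribe the residue of $\fr$ on a fixed curve, and one cannot prescribe the sign on an arbitrary large-coefficient curve for a general indefinite form — but here it is resolved cleanly because $p<0$ makes $\fr(\alpha_x)$ eventually negative along the ray $x\to+\infty$ while leaving its residue mod $8$ free to realize all four odd-type values. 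Beyond that the proof is pure counting, so I do not expect any further difficulty.
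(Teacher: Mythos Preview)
Your proof is correct and follows the same overall strategy as the paper: use Legendre's three-square theorem to reduce to a congruence condition on $\fr(\alpha)$, then exploit $p<0$ to make the framing negative while controlling the residue. The implementations differ in detail. The paper works modulo $4$: it first tries $\alpha=\gamma$ (the class $(1,0)$), and if $p\equiv 0$ or $3\pmod 4$ it switches to classes $(x,2)$ with $x\equiv 1\pmod 4$, computing $\fr(x,2)\equiv p+2\pmod 4$. You instead work modulo $8$ with the single family $(x,1)$, $x$ odd, and use a pigeonhole argument (four distinct residues cannot fit in the three-element bad set $\{0,3,7\}$). Your route is more uniform, avoiding the case split, at the cost of a slightly finer modulus; the paper's mod-$4$ argument is arithmetically lighter but needs two families of curves. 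Both are short and self-contained.
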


\begin{proof}[Proof of claim]
According to Legendre's three-square theorem (see e.g.\  ~\cite{serre-book}*{pp.\ 45-47}), a positive integer can be written as a sum of three squares if and only if it is \textit{not} of the form $4^a(8b+7)$ for some $a,b \geq 0$. In particular, it will suffice for us to find $\alpha$ so that 
\[
-\fr(\alpha)-1\not\equiv 0 \text{ or }3 \mod{4}\]
 or, equivalently, 
 \[
 \fr(\alpha)\not\equiv 3 \text{ or } 0\mod{4}.\]

With respect to our current basis $\{\gamma,\delta\}$, we have  
\[ \fr(x,y):= \lk((x,y), (x,y)^+)= px^2+(2q+1)xy + r y^2,\]
where $p<0$. We have $\fr(1,0)=p$ so if $p\not\equiv 0$ or $3\mod{4}$, choose $\alpha$ to be a representative of $(1,0)$ and the claim is proved. Otherwise, assume that $p\equiv 0$ or $3 \mod{4}$. Then consider the primitive class $(x,2)$ with $x\equiv 1\mod{4}$.

Note that we can choose $x$ large enough so that $\fr(x,2)=px^2+2(2q+1)x+4r\leq -1$. For example, this follows from the fact that $\frac{d^2}{dx^2}\fr(x,2)=2p<0$. Assume we have chosen such an $x$, so that $-\fr(x,2)-1>0$. Now we compute 
\begin{eqnarray*}
\fr(x,2)=px^2+2(2q+1)x+4r	&\equiv p+2\mod{4}\\
	&\equiv 2 \text{ or } 1\mod{4},
\end{eqnarray*}
since $p\equiv 0$ or $3 \mod{4}$.
In particular, $\fr(x,2)\not\equiv 0$ or $3\mod{4}$ as needed. Let $\alpha$ be a representative of the class $(x,2)$. This completes the proof of the claim.\end{proof} 

Returning to the proof of the proposition, we now have a simple closed curve $\alpha\subseteq F$ such that $-\fr(\alpha)-1=k^2+\ell^2+m^2$, for some integers $k,\ell,m$. Since $[\alpha]$ is primitive, we can find some curve $\beta\subseteq F$ so that $\{\alpha,\beta\}$ form a symplectic basis for $H_1(F;\Z)$. Let $A:=\left[\begin{array}{cc}
e	&g+1\\	
g	&h
\end{array} \right]$ denote the Seifert matrix of $F$ corresponding to the basis $\{\alpha,\beta\}$. In particular, we have $e = \fr(\alpha)$. 
Observe that 
\begin{align*}
\left[ \begin{array}{cc} e & g+1 \\ g & h \end{array} \right]
=\left[ \begin{array}{cc} 0 & 1 \\ 0 & v \end{array} \right]
-\left[ \begin{array}{cc} 1 & u  \\ u & u^2 \end{array} \right]
-\left[ \begin{array}{cc} k^2 & k \\ k & 1 \end{array} \right]
-\left[ \begin{array}{cc} \ell^2 & \ell \\ \ell & 1 \end{array} \right]
-\left[ \begin{array}{cc} m^2 & m \\ m & 1 \end{array} \right]
\end{align*}
where $u=-g-k-\ell-m$ and $v=h+u^2+3$. Then by~\cref{thm:upperbound}, $\ucp^\top(K)\leq 4$, as claimed.
\end{proof}

\bibliographystyle{alpha}
\def\MR#1{}
\bibliography{bib}
\end{document}